\newtheorem{thm}{Theorem}[subsection]
\newtheorem{prop}[thm]{Proposition}
\newtheorem{cor}[thm]{Corollary}
\newtheorem{lem}[thm]{Lemma}
\theoremstyle{definition}
\newtheorem{defi}[thm]{Definition}
\newtheorem{rem}[thm]{Remark}
\newtheorem{eg}[thm]{Example}
\newcommand{\R}{\mathbb{R}}
\newcommand{\Q}{\mathbb{Q}}
\newcommand{\Z}{\mathbb{Z}}
\newcommand{\K}{\mathbb{K}}
\newcommand{\C}{\mathbb{C}}
\newcommand{\Ps}{\mathbb{P}}
\newcommand{\Ox}{\mathcal{O}}
\newcommand{\Fil}{\mathcal{F}}
\newcommand{\Lb}{\mathcal{L}}
\newcommand{\vol}{\mathrm{vol}}
\newcommand{\avol}{\widehat{\vol}}
\newcommand{\adeg}{\widehat{\deg}}
\newcommand{\asy}{\mathrm{asy}}
\newcommand{\an}{\mathrm{an}}
\newcommand{\ndot}{\raisebox{.4ex}{.}}
\newcommand{\ab}{|\ndot|}
\newcommand{\nm}{\|\ndot\|}
\newcommand{\lm}{\lambda_{\max}}
\newcommand{\lmasy}{\lm^\asy}
\newcommand{\Spec}{\mathrm{Spec}\,}
\newcommand{\ord}{\mathrm{ord}}
\newcommand{\dv}{\mathrm{div}}
\newcommand{\red}{\mathrm{red}}
\newcommand{\Div}{\mathrm{Div}}
\newcommand{\WDiv}{\mathrm{WDiv}}
\newcommand{\Pic}{\mathrm{Pic}}
\begin{document}
\title{Volume function over a trivially valued field}
\author{Tomoya Ohnishi\footnote{tohnishi@math.kyoto-u.ac.jp}}
\maketitle

\begin{abstract}
We introduce an adelic Cartier divisor over a trivially valued field and discuss the bigness of it.
For bigness, we give the integral representation of the arithmetic volume and prove the existence of limit of it. 
Moreover, we show that the arithmetic volume is continuous and log concave.
\end{abstract}

\tableofcontents

\section{Introduction}
Arakelov geometry is a kind of arithmetic geometry.
Beyond scheme theory, it has been developed to treat a system of equations with integer coefficients such by adding infinite points.
In some sense, it is an extension of Diophantine geometry.
It is started by Arakelov \cite{arakelov1974intersection}, who tried to define the intersection theory on an arithmetic surface.
His result was not complete, but it was done by Faltings \cite{faltings1984calculus}.
He gave the complete intersection theory on a arithmetic surface, such as the arithmetic Riemann-Roch theorem, the Noether formula and so on.
Before a higher dimensional case, we would see the 1-dimensional case, that is, the arithmetic curves.

An arithmetic curve is the spectrum of the integer ring $O_K$ of a number field $K$.
For example, $\Spec \Z$ is an arithmetic curve.
It is not a proper scheme, so it is difficult to treat $\Spec \Z$ like an algebraic curve.
For instance, the degree of principal divisors might not be zero, and the set of Cartier divisors modulo principal divisors is trivial.
This problem is solved by considering the valuation theory.
The scheme $\Spec \Z$ is the set of prime numbers in $\Q$, which corresponds to the finite places of $\Q$.
But the field $\Q$ has not only finite places but also the infinite place.
Hence by adding the point which corresponds to the infinite place to $\Spec \Z$, we can make $\Spec \Z$ ``compact''.
This idea is very successful.
Now we can define an arithmetic divisor on $\Spec \Z$ as a pair of a Cartier divisor and a real number, which is a counterpart of the infinite place (more precisely, it is a continuous $\R$-valued function on a single point set).
The degree of principal divisors is not zero in general, but the arithmetic degree of arithmetic principal divisors is always zero because of the product formula:
\[ |f|_\infty \cdot \prod_{p : \text{primes}} |f|_p = 1, \quad \text{for}\ \forall f \in \Q^\times, \]
where $\ab_\infty$ is the usual absolute value and $\ab_p$ is the $p$-adic absolute value on $\Q$.
Moreover, the set of arithmetic divisors modulo arithmetic principal divisors on $\Spec \Z$ is isomorphic to $\R$.
This result corresponds to the fact that the Picard group of $\Ps^1$ is isomorphic to $\Z$.
In this way like an algebraic curve, we can study the geometry on $\Spec \Z$ equipped with the infinite point.
Above all, the product formula plays an important role.

Next, we briefly recall the higher dimensional case.
An arithmetic variety $X$ is an integral scheme flat and quasi-projective over $\Spec \Z$.
As we equip $\Spec \Z$ with the infinite point, we consider not only $X$ but also the counterpart of the infinite point, which is the analytic space $X(\C)$.
The main tools of Arakelov geometry are the intersection theory and an arithmetic divisor.
The intersection theory on $X$ was given by Gillet-Soul{\'e} \cite{gillet1990arithmetic} by using Green currents on $X(\C)$.
They also proved the general arithmetic Riemann-Roch theorem on $X$ \cite{gillet1992arithmetic}.
One of the most famous application of the arithmetic Riemann-Roch theorem is the solution of the Mordell conjecture due to Voijta \cite{vojta1991siegel}.
As related to arithmetic divisors, it was used to show the existence of small sections of $H^0(X,\overline{D})$, where $\overline{D}$ is an arithmetic divisor.
A small section is a section whose norm is less than or equal 1.
In Arakelov geometry, it plays the same role as a global section in algebraic geometry.
Therefore it is also important to study the asymptotic behavior of the amount of small sections of $H^0(X,n\overline{D})$ as $n \rightarrow \infty$.
To study it, Moriwaki \cite{moriwaki2009continuity} introduced the arithmetic volume
\[ \avol(\overline{D}) = \limsup_{m \rightarrow +\infty} \frac{\hat{h}^0(X,m\overline{D})}{m^d/d!}, \]
where $d = \dim X$ and $\hat{h}^0(X,\overline{D}) = \log \# \{ \text{small sections of}\ H^0(X,\overline{D})\}$.
Moriwaki \cite{moriwaki2009continuity} proved the continuity of the arithmetic volume, Chen and Boucksom \cite{boucksom2011okounkov} proved the concavity.
For general theory, we refer to \cite{moriwaki2014arakelov}.

Recently, by considering the analytic space associated with not only infinite places but also finite places, the study of the adelic version of Arakelov geometry \cite{moriwaki2016adelic} has also developed.
In this theory, the analytic space associated with a non-Archimedean absolute value in the sense of Berkovich plays the main role.
Moreover, as a further generalization, Chen and Moriwaki \cite{chen2019arakelov} introduced the notion of adelic curves.
It is a field equipped with a measure space which is consist of absolute values, and the ``product formula''.
This notion contains several classical settings.
For example, we can treat algebraic curves and arithmetic curves as adelic curves.
However, since the notion of adelic curves is very general, things that cannot be considered as a curve may be an adelic curve.
One of them is a trivially valued field.
It is a field $K$ equipped with the trivial product formula, that is,
\[ |f|_0 = 1, \quad \text{for}\ \forall f \in K^\times. \]
Hence Arakelov geometry over a trivially valued field is the geometry of schemes over an adelic curve $\Spec K$.

As the classical Arakelov geometry, there is the arithmetic volume function of an adelic Cartier divisor $\overline{D}$ on a projective variety $X$, which was introduced by Moriwaki and Chen \cite{chen2017sufficient}:
\[ \avol(\overline{D}) = \limsup_{n \rightarrow \infty} \frac{\adeg_+(n\overline{D})}{n^{d+1}/(d+1)!}, \]
where $d = \dim X$.
In the classical setting, the invariants $\hat{h}^0(X,\overline{D})$ and $\adeg_+(\overline{D})$ behave in a similar way.
For detail, we refer to \cite{chen2010arithmetic}, \cite{chen2015majorations} and \cite{chen2019arakelov}.
We say that an adelic Cartier divisor $\overline{D}$ is big if $\avol(\overline{D}) > 0$.
In this paper, we will show several properties of the arithmetic volume $\avol(\ndot)$.

\begin{thm}
Let $\overline{D},\overline{E}$ be adelic Cartier divisors.
The arithmetic volume has the following properties:
\begin{enumerate}
\item (integral formula).
\[ \avol(\overline{D}) = (d+1)\int_0^\infty F_{\overline{D}}(t)\ \mathrm{d}t, \]
where $F_{\overline{D}}$ is a function given by $\overline{D}$.
(c.f. Theorem \ref{vol1}).
\item (limit existence).
\[ \avol(\overline{D}) = \lim_{n \rightarrow \infty} \frac{\adeg_+(n\overline{D})}{n^{d+1}/(d+1)!}. \]
(c.f. Theorem \ref{vol1}).
\item (continuity). If $D$ is big, we have
\[ \lim_{\epsilon \rightarrow 0} \avol(\overline{D} + \epsilon \overline{E}) = \avol(\overline{D}). \]
(c.f. Theorem \ref{cvol}).
\item (homogeneity). For $a \in \R_{>0}$,
\[ \avol(a\overline{D}) = a^{d+1}\avol(\overline{D}). \]
(c.f. Corollary \ref{vol2}).
\item (log concavity). If $\overline{D},\overline{E}$ are big. we have
\[ \avol(\overline{D} + \overline{E})^{\frac{1}{d+1}} \geq \avol(\overline{D})^{\frac{1}{d+1}} + \avol(\overline{E})^{\frac{1}{d+1}}. \]
(c.f. Theorem \ref{cvol2}).
\end{enumerate}
\end{thm}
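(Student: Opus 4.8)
The plan is to recast the arithmetic volume in terms of filtered graded linear series and Newton--Okounkov bodies, following the strategy of Boucksom--Chen \cite{boucksom2011okounkov} but over the base field $K$ rather than a number field. An adelic Cartier divisor $\overline D=(D,g)$ on a projective variety $X$ of dimension $d$ over the trivially valued field $K$ equips, for every $n\ge 0$, the space $H^0(X,nD)$ with an $\R$-filtration $\Fil^\bullet H^0(X,nD)$ induced by the sup-seminorm attached to $g$ (over a trivially valued field a continuous metric is the same datum as such a filtration, and continuity of $g$ forces the jumps to be bounded linearly in $n$). One computes directly that $\adeg_+$ of a filtered vector space equals $\int_0^\infty\dim\Fil^t\,\mathrm{d}t$, so after the substitution $t\mapsto nt$ one gets
\[ \frac{\adeg_+(n\overline D)}{n^{d+1}/(d+1)!}=(d+1)\int_0^\infty \frac{\dim\Fil^{nt}H^0(X,nD)}{n^d/d!}\,\mathrm{d}t . \]
Everything therefore reduces to the asymptotics of $\dim\Fil^{nt}H^0(X,nD)$, and I would set $F_{\overline D}(t):=\lim_{n\to\infty}\dfrac{\dim\Fil^{nt}H^0(X,nD)}{n^d/d!}$.

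Parts (1) and (2) then amount to a single statement: that this limit exists and that $\int_0^\infty(\cdot)\,\mathrm{d}t$ commutes with it. I would prove this via the Okounkov-body construction: fix an admissible flag on $X$ to obtain a rank-$d$ valuation on the section algebra $\bigoplus_n H^0(X,nD)$, adjoin the filtration as an extra real coordinate, and verify that the resulting data satisfies the hypotheses of Boucksom--Chen's convergence theorem for filtered linear series. This yields a concave transform $G$ on the Okounkov body $\Delta=\Delta(D)\subseteq\R^d$ with $F_{\overline D}(t)=d!\cdot\vol\{x\in\Delta: G(x)\ge t\}$, whence $\avol(\overline D)=(d+1)!\int_{\Delta}\max(G,0)\,\mathrm{d}\mathrm{Leb}$; the existence of the limit comes from a Fekete/superadditivity argument together with the leaf count $\#(\text{leaves})\sim n^d\vol(\Delta)$, and the exchange of limit and integral is justified by the uniform (linear) bound on the jumps. \emph{This is the step I expect to be the main obstacle}: one must check that the filtration attached to an adelic Cartier divisor over a trivially valued field is multiplicative with linearly bounded jumps and of the right asymptotic ($\asy$) shape, so that the purely geometric Okounkov-body machinery applies verbatim; the triviality of the valuation on $K$ is itself harmless, since the volume asymptotics in play are geometric over $K$.

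Granting (1) and (2), homogeneity (4) is routine: for $a=p/q\in\Q_{>0}$, take the limit along multiples of $q$ in the definition to obtain $\avol(a\overline D)=a^{d+1}\avol(\overline D)$, and for real $a$ use the integral formula together with the scaling $F_{a\overline D}(t)=a^{d}F_{\overline D}(t/a)$, which follows from $\vol(aD)=a^{d}\vol(D)$, $\Delta(aD)=a\,\Delta(D)$ and the compatible rescaling $G_{a\overline D}(ax)=aG_{\overline D}(x)$ on the associated $\R$-divisor. For continuity (3), bigness of $D$ makes $\Delta$ of full dimension $d$ and, crucially, forces the transforms $G$ to have uniformly bounded support; writing $\overline E$ as a difference of two adelic divisors each dominated by a multiple of $\overline D$ plus an auxiliary ample adelic divisor, one shows $G_{\overline D+\epsilon\overline E}$ stays within $O(\epsilon)$ of $G_{\overline D}$ uniformly, so $\int\max(G,0)$ passes to the limit by dominated convergence, using also classical continuity of the geometric volume $\vol(D+\epsilon E)\to\vol(D)$.

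Finally, log concavity (5) is Brunn--Minkowski. Set $\widehat\Delta(\overline D):=\{(x,s): x\in\Delta(D),\ 0\le s\le G_{\overline D}(x)\}\subseteq\R^{d+1}$, so that $\avol(\overline D)=(d+1)!\,\vol_{d+1}\bigl(\widehat\Delta(\overline D)\bigr)$ by Part (1). Multiplicativity of the filtered section algebras gives $G_{\overline D+\overline E}(x+y)\ge G_{\overline D}(x)+G_{\overline E}(y)$ together with $\Delta(D+E)\supseteq\Delta(D)+\Delta(E)$, hence $\widehat\Delta(\overline D)+\widehat\Delta(\overline E)\subseteq\widehat\Delta(\overline D+\overline E)$; when $\overline D$ and $\overline E$ are big, both bodies are genuinely $(d+1)$-dimensional, and the classical Brunn--Minkowski inequality $\vol_{d+1}(A+B)^{1/(d+1)}\ge\vol_{d+1}(A)^{1/(d+1)}+\vol_{d+1}(B)^{1/(d+1)}$ gives the claim after multiplying by $(d+1)!$ and taking $(d+1)$-th roots.
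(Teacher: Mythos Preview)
Your Okounkov-body/Boucksom--Chen approach is sound in outline and would prove the theorem, but it takes a genuinely different route from the paper. The paper never invokes Okounkov bodies or concave transforms; instead it exploits a feature specific to the trivially valued setting: for each $t$ there is an explicit $\R$-Weil divisor $D_{\mu(g-t)}$ (built from the slopes $\mu_x(g-t)=\inf_{\xi\in(\eta^{\an},x^{\an})} g(\xi)/t(\xi)$ along each branch of the tree $X^{\an}_{\dv}$) with the exact identification
\[
\Fil^{nt}\bigl(H^0(nD),\nm_{ng}\bigr)=H^0\bigl(nD_{\mu(g-t)}\bigr).
\]
So $F_{\overline D}(t)=\vol(D_{\mu(g-t)})$ is a \emph{geometric} volume of a divisor, and parts (1)--(2) reduce to the classical limit existence for $\vol$ plus bounded convergence. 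Homogeneity (4) then follows from $d$-homogeneity of $\vol$, continuity (3) from the observation that $t\mapsto\mu_x(g_\epsilon-t)$ and $\epsilon\mapsto\mu_x(g_\epsilon-t)$ are concave (hence the integrand is $d$-concave in $\epsilon$, so continuous), and log-concavity (5) from the one-dimensional Pr\'ekopa--Leindler inequality applied to the integrands $\Theta_{(D,g)}(t)=(d+1)F_{(D,g)}(t)$, combined with a small lemma converting the resulting multiplicative inequality into $(d+1)$-concavity via homogeneity.

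Your approach buys generality: the filtered-linear-series machinery works over any base and would transfer with minimal change to other adelic settings. The paper's approach buys elementarity and transparency: no valuation flag, no semigroup asymptotics, no appeal to the Boucksom--Chen convergence theorem---everything is reduced to known properties of the classical $\vol$ of $\R$-Weil divisors, and the ``main obstacle'' you flag (linearly bounded jumps, multiplicativity) simply does not arise. Your part (5) via Brunn--Minkowski on the $(d{+}1)$-dimensional body $\widehat\Delta(\overline D)$ is essentially equivalent to the paper's Pr\'ekopa--Leindler argument, just packaged geometrically rather than functionally. Your continuity sketch (3) is the weakest link: the claim that $G_{\overline D+\epsilon\overline E}$ stays within $O(\epsilon)$ of $G_{\overline D}$ uniformly needs a real argument, whereas the paper's concavity-in-$\epsilon$ of $\mu_x$ gives continuity for free.
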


Section 2 is devoted to preliminary such as algebraic geometry, normed vector spaces and the Berkovich spaces.
In Section 3, we see the fundamental result of Arakelov geometry over a trivially absolute values.
In Section 4, we discuss the properties of the arithmetic volume.
For example, we will prove the continuity and the concavity of the arithmetic volume.

\section{Preliminary}
\subsection{$\Q$- and $\R$-divisors}
Let $X$ be a variety over a field $K$ and $K(X)$ be a function field of $X$.
By abuse of notation, we also denote the (constant) sheaf of rational functions on $X$ by $K(X)$.
Firstly, we recall the definitions of Cartier divisors and Weil divisors (for detail, see \cite{hartshorne2013algebraic} and \cite{liu2006algebraic}).

\begin{defi}
Let $\Div(X) := H^0(X,K(X)^\times/\Ox_X^\times)$, whose element is called a \textit{Cartier divisor}.
A non-zero rational function $f \in K(X)^\times$ naturally gives rise to a Cartier divisor, which is called a \textit{principal Cartier divisor} (or simply a \textit{principal divisor}) and denoted by $(f)$.
We denote the group law on $\Div(X)$ additive way.
We say that two Cartier divisors $D_1,D_2 \in \Div(X)$ are \textit{linearly equivalent} if $D_1-D_2$ is principal, which is denoted by $D_1 \sim D_2$.
We set $\Pic(X) := \Div(X)/\sim$, which is called the \textit{Picard group of $X$}. 
We say that a Cartier divisor $D \in \Div(X)$ is \textit{effective} if it is contained in the image of the canonical map $H^0(X,\Ox_X \cap K(X)^\times) \rightarrow H^0(X,K(X)^\times/\Ox_X^\times)$.
For two Cartier divisors $D_1,D_2$, we write $D_1 \geq D_2$ if $D_1-D_2$ is effective.
In particular, we write $D \geq 0$ if $D$ is effective.
For an open subset $U$ of $X$, let $D|_U$ be the image of $D$ by the canonical restriction $H^0(X,K(X)^\times/\Ox_X) \rightarrow H^0(U,K(X)^\times/\Ox_X^\times)$, which gives a Cartier divisor on $U$.
\end{defi}

By definition, for $D \in \Div(X)$, there is an open covering $\{U_i\}$ of $X$ such that $D$ is given by some non-zero rational function $f_i \in K(X)^\times$ on $U_i$ and $f_i/f_j \in \Ox_X(U_i \cap U_j)^\times$ for $i \neq j$.
In the above setting, $D$ is effective if and only if $f_i$ is regular on $U_i$, that is, $f_i \in \Ox_X(U_i)$ for all $i$.
 
We can associate any Cartier divisor $D = \{(U_i,f_i)\} \in \Div(X)$ with a subsheaf $\Ox_X(D) \subset K(X)$, which is given by $\Ox_X(D)|_{U_i} := f_i^{-1}\Ox_X|_{U_i}$.
It is well-known that this construction is independent of the choice of a representation $\{(U_i,f_i)\}$ of $D$ and $\Ox_X(D)$ is an  invertible $\Ox_X$-module on $X$.

\begin{prop}[{c.f. \cite[Proposition 6.13]{hartshorne2013algebraic} and \cite[Proposition 1.18]{liu2006algebraic}}]
Let $D_1,D_2$ be Cartier divisors.
\begin{enumerate}
\item $\Ox_X(D_1) \simeq \Ox_X(D_2)$ if $D_1 \sim D_2$.
\item $\Ox_X(D_1 + D_2) \simeq \Ox_X(D_1) \otimes_{\Ox_X} \Ox_X(D_2)$.
\end{enumerate}
\end{prop}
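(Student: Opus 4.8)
The plan is to prove both isomorphisms by working on a common open covering and tracking local equations, using throughout that $\Ox_X(D)$ is, by construction, an invertible subsheaf of the constant sheaf $K(X)$ which does not depend on the chosen representation of $D$.

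For (1), suppose $D_1 \sim D_2$, say $D_1 - D_2 = (f)$ for some $f \in K(X)^\times$. First I would refine so that there is an open covering $\{U_i\}$ of $X$ on which $D_1$ is represented by $f_i \in K(X)^\times$ and $D_2$ by $g_i \in K(X)^\times$; the equality $D_1 - D_2 = (f)$ in $H^0(X,K(X)^\times/\Ox_X^\times)$ then says exactly that $f_i = f g_i u_i$ with $u_i \in \Ox_X(U_i)^\times$ for every $i$. Consequently, as subsheaves of $K(X)$ one has $\Ox_X(D_1)|_{U_i} = f_i^{-1}\Ox_X|_{U_i} = f^{-1}g_i^{-1}\Ox_X|_{U_i} = f^{-1}\cdot\Ox_X(D_2)|_{U_i}$, the unit $u_i$ being absorbed into $\Ox_X|_{U_i}$. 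Since $f$ is a global section of $K(X)^\times$, multiplication by $f^{-1}$ is a globally defined $\Ox_X$-linear automorphism of the constant sheaf $K(X)$, and the computation above shows it restricts to an isomorphism $\Ox_X(D_2) \simeq \Ox_X(D_1)$; no gluing step is required.

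For (2), again pass to a covering $\{U_i\}$ on which $D_1,D_2$ are given by $f_i,g_i$, so that $D_1 + D_2$ is given by $f_i g_i$. On $U_i$ the sheaves $\Ox_X(D_1)$, $\Ox_X(D_2)$, $\Ox_X(D_1+D_2)$ are free of rank one with generators $f_i^{-1}$, $g_i^{-1}$, $(f_i g_i)^{-1}$, and $\bigl(\Ox_X(D_1)\otimes_{\Ox_X}\Ox_X(D_2)\bigr)|_{U_i}$ is free of rank one with generator $f_i^{-1}\otimes g_i^{-1}$. I would define the candidate morphism locally by $f_i^{-1}\otimes g_i^{-1}\mapsto (f_i g_i)^{-1}$, equivalently as the composite $\Ox_X(D_1)\otimes_{\Ox_X}\Ox_X(D_2)\hookrightarrow K(X)\otimes_{\Ox_X}K(X)\to K(X)$ with the last map induced by multiplication, and then check that these patch: on $U_i\cap U_j$ the transition function of $\Ox_X(D_1)$ is $f_i/f_j$ and that of $\Ox_X(D_2)$ is $g_i/g_j$ (both units, since $D_1,D_2$ are Cartier divisors), so that of the tensor product is $(f_i g_i)/(f_j g_j)$, which is precisely the transition function of $\Ox_X(D_1+D_2)$. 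Hence the local maps glue to a global $\Ox_X$-morphism $\Ox_X(D_1)\otimes_{\Ox_X}\Ox_X(D_2)\to\Ox_X(D_1+D_2)$, and it is an isomorphism because it is one on each $U_i$. Part (1) can then be reread as the special case where the second divisor is principal, but I would keep the two arguments separate for clarity.

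The argument is almost entirely bookkeeping, so there is no serious obstacle. The one point that genuinely needs care — and the place I expect any friction — is choosing a \emph{common} refinement on which $D_1$ and $D_2$ (hence $D_1\pm D_2$ and $(f)$) are simultaneously represented before manipulating local equations, and then verifying that the units relating the representatives, together with the choices of local generators of the invertible sheaves, do not disturb the claimed equalities of subsheaves; this is exactly where the independence of $\Ox_X(D)$ from its representation is invoked.
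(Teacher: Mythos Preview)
Your argument is correct and is essentially the standard proof one finds in the cited references. Note, however, that the paper itself does not supply a proof of this proposition: it is stated with a ``c.f.'' to Hartshorne and Liu and left without demonstration, so there is no in-paper proof to compare against. Your write-up would serve perfectly well as the omitted verification.
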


We denote $\Gamma(U,\Ox_X(D))$ by $\Gamma(U,D)$ for an open subset $U$ of $X$.
For any open subset $U$ of $X$, we have
\begin{equation}
\label{sec1}
\Gamma(U,D) = \{f \in K(X)^\times \,|\, (D + (f))|_U \geq 0 \} \cup \{0\}
\end{equation}
by definition.

Conversely, we can associate any invertible $\Ox_X$-module $\Lb$ with a Cartier divisor $D$ such that $\Lb \simeq \Ox_X(D)$.
Let $s$ be a non-zero rational section of $\Lb$, that is, $s \in \Lb_\eta \setminus \{0\}$ where $\eta$ is the generic point of $X$.
Let $\{U_i\}$ be an open covering of $X$ which trivializes $\Lb$, and $\omega_i \in \Lb(U_i)$ be a local basis of $\Lb$ for each $i$.
Then $s$ is denoted by $f_i\omega_i$ on $U_i$ for some $f_i \in K(X)$.
The date $\{(U_i, f_i)\}$ gives the required Cartier divisor $\mathrm{div}(s)$.
For example, if we choose $1$ as a rational section of $\Ox_X(D)$, then we have $\mathrm{div}(1) = D$ by its construction.

\vspace{0.1in}

Next, we assume that $X$ is normal.
Let $X^{(1)} = \{x \in X \,|\, \mathrm{codim}_X\overline{\{x\}} = 1\}$.
For $x \in X^{(1)}$, let $[x] := \overline{\{x\}}$, which is an irreducible closed subset of $X$ and called a \textit{prime divisor}.

\begin{defi}
Let $\WDiv(X) := \bigoplus_{x \in X^{(1)}} \Z [x]$, whose element is called a \textit{Weil divisor}.
If we write 
\[ D=\sum_{x \in X^{(1)}}n_x[x], \]
we set $\ord_x(D) := n_x$.
We say that a Weil divisor $D \in \WDiv(X)$ is \textit{effective} if $\ord_x(D) \geq 0$ for all $x \in X^{(1)}$.
For two Weil divisors $D_1,D_2$, we write $D_1 \geq D_2$ if $D_1-D_2$ is effective.
In particular, we write $D \geq 0$ if $D$ is effective.
For a non-empty open subset $U$ of $X$, let 
\[ D|_U := \sum_{x \in X^{(1)} \cap U} \ord_x(D)[x], \]
which is called the restriction of a Weil divisor $D$ on $U$.
\end{defi}

Let $x \in X^{(1)}.$
Since $X$ is normal, $\Ox_{X,x}$ is a discrete valuation ring.
Hence we have the normalized discrete valuation $\ord_x$ on $K(X)$ associated with $\Ox_{X,x}$.
For a non-zero rational function $f \in K(X)^\times$, let 
\[ (f) := \sum_{x \in X^{(1)}} \ord_x(f) [x]. \]
This is a Weil divisor and such a divisor is called a \textit{principal Weil divisor} (or simply a \textit{principal divisor}).
We say that two Weil divisors $D_1,D_2 \in \WDiv(X)$ are \textit{linearly equivalent} if $D_1-D_2$ is principal.
Then we write $D_1 \sim D_2$.

\vspace{0.1in}

We can associate any Cartier divisor $D \in \Div(X)$ with a Weil divisor as follows:
For any $x \in X^{(1)}$, let $f \in K(X)$ be a local equation around $x$ of $D$.
Then we set $\ord_x(D) := \ord_x(f)$.
It is independent of the choice of a local equation.
Hence we can define that
\[ D := \sum_{x \in X^{(1)}} \ord_x(D)[x]. \]
This construction gives a homomorphism $\varphi:\Div(X) \rightarrow \WDiv(X)$.

\begin{prop}[{c.f. \cite[Proposition 2.14]{liu2006algebraic}}]
\mbox{}
\begin{enumerate}
\item The homomorphism $\varphi$ is injective.
Moreover, $\varphi$ is an isomorphism if $X$ is regular. 
Hence we sometimes identify a Cartier divisor with a Weil divisor.
\item For any $D_1, D_2 \in \Div(X)$, $D_1 \sim D_2$ as Cartier divisors if and only if $D_1 \sim D_2$ as Weil divisors.
\item For any $D \in \Div(X)$, $D \geq 0$ as Cartier divisors if and only if  $D \geq 0$ as Weil divisors.
\end{enumerate}
\end{prop}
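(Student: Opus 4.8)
\emph{Proof strategy.}
The entire statement is a formal consequence of one algebraic input, \emph{algebraic Hartogs' lemma}: since $X$ is normal and noetherian, for every non-empty open $U \subseteq X$ one has, inside $K(X)$,
\[
\Ox_X(U) \;=\; \bigcap_{x \in U \cap X^{(1)}} \Ox_{X,x};
\]
equivalently, $f \in K(X)^\times$ lies in $\Ox_X(U)$ iff $\ord_x(f) \ge 0$ for all $x \in U \cap X^{(1)}$, and lies in $\Ox_X(U)^\times$ iff $\ord_x(f) = 0$ for all such $x$. I would first record this, reducing to the affine case $U = \Spec A$ with $A$ a normal noetherian domain, where it is the classical identity $A = \bigcap_{\mathrm{ht}\,\mathfrak p = 1} A_{\mathfrak p}$.

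For (1), to show $\varphi$ is injective I would take $D = \{(U_i, f_i)\}$ with $\varphi(D) = 0$; then $\ord_x(f_i) = \ord_x(D) = 0$ for every $x \in U_i \cap X^{(1)}$, so $f_i \in \Ox_X(U_i)^\times$ by the lemma, i.e.\ $D|_{U_i} = 0$ for all $i$, hence $D = 0$. For the isomorphism claim when $X$ is regular, I would use that a regular local ring is a UFD (Auslander--Buchsbaum), so every height-one prime of $\Ox_{X,p}$ is principal. Given $D = \sum_{j=1}^{r} n_j [x_j] \in \WDiv(X)$ --- a finite sum, as $\WDiv(X)$ is a direct sum --- and $p \in X$, I would write each $\mathfrak p_j \subset \Ox_{X,p}$ coming from an $x_j$ through $p$ as $\mathfrak p_j = (\pi_j)$, set $g_p := \prod_{j:\, p \in [x_j]} \pi_j^{\,n_j} \in K(X)^\times$, and check, after shrinking to a suitable neighbourhood $V_p$ of $p$, that $\ord_y(g_p) = \ord_y(D)$ for all $y \in V_p \cap X^{(1)}$. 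On overlaps $g_p/g_q$ then has all $\ord_y$ equal to $0$, hence is a unit by the lemma, so $\{(V_p,g_p)\}$ patches to a Cartier divisor $D'$ with $\varphi(D') = D$.

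For (2) and (3), I would first note that for $f \in K(X)^\times$ the Weil divisor $\varphi((f))$ attached to the principal \emph{Cartier} divisor $(f)$ equals the principal \emph{Weil} divisor $(f)$, since $f$ is a local equation for $(f)$ everywhere, so $\ord_x(\varphi((f))) = \ord_x(f)$. Then (2): if $D_1 - D_2 = (f)$ as Cartier divisors, apply $\varphi$; conversely, if $\varphi(D_1) - \varphi(D_2) = (f)$ as Weil divisors then $\varphi(D_1 - D_2) = \varphi((f))$, and injectivity from (1) forces $D_1 - D_2 = (f)$. And (3): if $D = \{(U_i, f_i)\}$ is effective then each $f_i \in \Ox_X(U_i)$, so $\ord_x(D) = \ord_x(f_i) \ge 0$ for all $x \in X^{(1)}$; conversely, if $\ord_x(D) \ge 0$ for all $x \in X^{(1)}$ then each $f_i$ has $\ord_x(f_i) \ge 0$ on $U_i \cap X^{(1)}$, hence $f_i \in \Ox_X(U_i)$ by the lemma, so $D$ is effective.

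The main obstacle is the surjectivity of $\varphi$ for regular $X$: one must invoke Auslander--Buchsbaum to make the height-one primes principal and then carefully verify that the locally chosen equations $g_p$ have the prescribed orders along every prime divisor and genuinely glue to a global Cartier divisor. The remaining assertions are bookkeeping on top of algebraic Hartogs' lemma.
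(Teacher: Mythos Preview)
The paper does not supply its own proof of this proposition; it is stated as a citation to \cite[Proposition 2.14]{liu2006algebraic} and left without argument. Your proposal is correct and is precisely the standard textbook proof (as in Liu or Hartshorne): algebraic Hartogs for a normal noetherian scheme handles injectivity, (2), and (3), while Auslander--Buchsbaum supplies local equations for surjectivity in the regular case.
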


We can associate any Weil divisor $D$ with a subsheaf $\Ox_X(D) \subset K(X)$, which is defined by 
\[ \Ox_X(D)|_U := \{f \in K(X)^\times \,|\, (D + (f))|_U \geq 0 \} \cup \{0\} \]
for any open subset $U$ of $X$.
By (\ref{sec1}), if $D$ is Cartier, the above construction gives the same invertible $\Ox_X$-module $\Ox_X(D)$.
However $\Ox_X(D)$ is not invertible if $D$ is not Cartier.

\vspace{0.1in}

Let $\K = \Q$ or $\R$.
Let us introduce the definition of $\K$-divisors.

\begin{defi}
Let $\Div(X)_\K := \Div(X) \otimes_\Z \K$, $\WDiv(X)_\K := \WDiv(X) \otimes_\Z \K$ and $K(X)_\K^\times := K(X)^\times \otimes_\Z \K$.
An element of $\Div(X)_\K$ (resp. $\WDiv(X)_\K$, $K(X)_\K^\times$) is called a \textit{$\K$-Cartier divisor} (resp. a \textit{$\K$-Weil divisor}, a \textit{$\K$-rational function}) on $X$.
Clearly, Cartier divisors and $\Q$-Cartier divisors (resp. Weil divisors and $\Q$-Weil divisors) are $\R$-Cartier divisors (resp. $\R$-Weil divisors).
A non-zero $\K$-rational function $f \in K(X)_\K^\times$ naturally gives rise to a $\K$-Cartier divisor (or equivalently a $\K$-Weil divisor), which is called a \textit{$\K$-principal divisor} and denoted by $(f)$.
We say that two $\R$-Cartier divisors (resp. $\R$-Weil divisors) $D_1,D_2$ are \textit{$\K$-linearly equivalent} if $D_1-D_2$ is $\K$-principal, which is denoted by $D_1 \sim_\K D_2$.
We say that a $\K$-Cartier divisor (resp. a $\K$-Weil divisor) $D$ is \textit{effective} if $D$ is a linear combination of effective divisors with positive coefficients in $\K$.
We write $D_1 \geq D_2$ if $D_1-D_2$ is effective.
In particular, we write $D \geq 0$ if $D$ is effective.
\end{defi}

Similarly to Cartier divisors, for $D \in \Div(X)_\K$, there is an open covering $\{U_i\}$ of $X$ such that $D$ is given by some non-zero $\K$-rational function $f_i \in K(X)_\K^\times$ on $U_i$ and $f_i/f_j \in (\Ox_X(U_i \cap U_j) \otimes_\Z \K)^\times$ for $i \neq j$.

Let $D \in \WDiv(X)_\K$.
By definition, we can write $D = \sum_{x \in X^{(1)}} k_x[x]$, where $k_x \in \K$ and $k_x = 0$ for all but finitely many $x \in X^{(1)}$.
Then we define the round down of $D$ as follows:
\[ \lfloor D \rfloor := \sum_{x \in X^{(1)}} \lfloor k_x \rfloor [x]. \]
This is a Weil divisor and $\lfloor D \rfloor = D$ if and only if $D \in \WDiv(X)$.

For $D \in \WDiv(X)_\K$, the \textit{associated $\Ox_X$-module} $\Ox_X(D)$ is defined by $\Ox_X(\lfloor D \rfloor)$.
Then we have $H^0(X,D) = \{f \in K(X)^\times \,|\, D + (f) \geq 0 \} \cup \{0\}$.
We remark that $D + (f) \geq 0 \Leftrightarrow \lfloor D \rfloor + (f) \geq 0$ for any $f \in K(X)^\times$ and $\Ox_X(2D)$ is not isomorphic to $\Ox_X(D) \otimes_{\Ox_X} \Ox_X(D)$ in general.

\begin{prop}[{c.f. \cite[Theorem 3.2]{liu2006algebraic}}]
Let $D \in \WDiv(X)_\K$.
Then $H^0(X,D)$ is a finite-dimensional vector space over $K$.
\end{prop}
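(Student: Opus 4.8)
The plan is to reduce the assertion to the finiteness of the space of global sections of a coherent sheaf on a projective variety over $K$.

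First, by the remark just before the statement we have $D + (f) \geq 0 \Leftrightarrow \lfloor D \rfloor + (f) \geq 0$ for every $f \in K(X)^\times$, so $H^0(X,D) = H^0(X,\lfloor D \rfloor)$ with $\lfloor D \rfloor \in \WDiv(X)$; thus I may assume from the outset that $D$ is an honest Weil divisor. By the definition of the associated $\Ox_X$-module this means $H^0(X,D) = \Gamma(X,\Ox_X(D))$, so it suffices to prove that $\Gamma(X,\Ox_X(D))$ is a finite-dimensional $K$-vector space.

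Second, I would check that $\Ox_X(D)$ is a coherent $\Ox_X$-module. It is a quasi-coherent subsheaf of the constant sheaf $K(X)$. On an affine open $\Spec A$, with $A$ a normal Noetherian domain, $\Gamma(\Spec A,\Ox_X(D))$ consists of the $f \in \mathrm{Frac}(A)$ with $\ord_\mathfrak{p}(f) \geq -\ord_\mathfrak{p}(D)$ at every height-one prime $\mathfrak{p}$. Clearing denominators along the finitely many prime divisors occurring in $D$ with positive multiplicity produces a nonzero $g \in A$ such that $gf$ has nonnegative order at every height-one prime, hence $gf \in A$ by normality ($A = \bigcap_{\mathrm{ht}\,\mathfrak{p}=1} A_\mathfrak{p}$); thus $\Gamma(\Spec A,\Ox_X(D)) \subseteq g^{-1}A$, and since $g^{-1}A$ is a finitely generated module over the Noetherian ring $A$, so is $\Gamma(\Spec A,\Ox_X(D))$. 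Therefore $\Ox_X(D)$ is coherent. (When $D$ is Cartier this is automatic, $\Ox_X(D)$ being invertible; the content is the general Weil case, where normality of $X$ is used to control the sheaf by its behaviour in codimension one.)

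Third, since $X$ is projective, hence proper, over the field $K$, the finiteness theorem for coherent cohomology over a Noetherian base, applied to $X \to \Spec K$, gives that $H^i(X,\mathcal{F})$ is a finitely generated $K$-module, i.e.\ a finite-dimensional $K$-vector space, for every coherent $\Ox_X$-module $\mathcal{F}$ and every $i \geq 0$; taking $\mathcal{F} = \Ox_X(D)$ and $i = 0$ yields the claim. I expect the only genuine obstacle to be the coherence of $\Ox_X(D)$ in the non-Cartier case, which really uses that $X$ is normal and Noetherian. One should also keep in mind that properness of $X$ is indispensable here: for $X = \mathbb{A}^1_K$ and $D = 0$ one already gets $\Gamma(X,\Ox_X) = K[t]$, which is infinite-dimensional, so it is the standing hypothesis that $X$ is projective (hence proper) over $K$ that makes the statement true.
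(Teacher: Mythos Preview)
The paper does not supply its own proof of this proposition; it simply cites Liu's textbook, so there is nothing to compare against beyond the standard argument. Your proof is correct and is precisely that standard argument: reduce to an integral Weil divisor via the round-down, verify that $\Ox_X(D)$ is coherent using normality of $X$ (so that the local ring is the intersection of its localizations at height-one primes), and then invoke finiteness of cohomology of coherent sheaves on a proper scheme over a field.

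One contextual remark: in Section~2.1 of the paper the ambient hypotheses on $X$ are only ``normal variety over $K$''; projectivity is not imposed there, although it is in Sections~3--4. You are right that properness is essential for the statement to hold, and your counterexample $\mathbb{A}^1_K$ makes this clear; so strictly speaking the proposition as placed in Section~2.1 is missing a hypothesis, and your proof correctly flags and uses it.
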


\subsection{Big divisors}
We simply recall the definitions of bigness of Cartier divisors.
Let $X$ be a variety over a field $K$.

\begin{defi}
\label{big}
Let $D$ be a Cartier divisor on $X$.
Let $h^0(D) := \dim_K H^0(X,D)$ and $d = \dim X$.
We define the \textit{volume} $\vol(D)$ of $D$ as follows:
\[ \vol(D) := \limsup_{n \rightarrow +\infty} \frac{h^0(nD)}{n^d/d!}. \]
We say that $D$ is \textit{big} if $\vol(D) > 0$.
\end{defi}

Later we will consider the volume of an $\R$-Weil divisor.
Hence we extends the above definition.

\begin{defi}
Let $D$ be an $\R$-Weil divisor on a normal variety $X$.
We define a function $\mathfrak{h}_D:\R_+ \rightarrow \Z$ by $\mathfrak{h}_D(t) := \dim_K H^0(tD) = \dim_K H^0(\lfloor tD \rfloor)$.
The \textit{volume} of $D$ is defined by
\[ \vol(D) := \limsup_{t \rightarrow +\infty} \frac{\mathfrak{h}_D(t)}{t^d/d!}, \]
where $d = \dim X$. 
We say that $D$ is \textit{big} if $\vol(D) > 0$.
\end{defi}

By Fulger, Koll{\'a}r and Lehmann \cite{fulger2016volume}, the above definition agrees with one in Definition \ref{big} if $X$ is proper and $D$ is Cartier.

Finally we recall the well-known properties of the volume function $\vol(\ndot)$ without a proof (for detail, see \cite{lazarsfeld2017positivity}).

\begin{prop}
Let $X$ be a proper normal variety and $d = \dim X$.
Let $D,E$ be $\R$-Cartier (or $\R$-Weil) divisors on $X$.
\begin{enumerate}
\item $\displaystyle \vol(D) = \lim_{n \rightarrow +\infty} \frac{h^0(nD)}{n^d/d!} \left(= \lim_{t \rightarrow +\infty} \frac{\mathfrak{h}_D(t)}{t^d/d!} \right)$.
\item For $a \in \R_{>0}$, $\vol(aD) = a^d\vol(D)$.
\item The volume function $\vol(\ndot)$ is continuous, that is, $\vol(E) \rightarrow \vol(D)$ as $E \rightarrow D$ (which means that each coefficients of $E$ converge coefficients of $D$ as an $\R$-Weil divisor).
\item The volume function $\vol(\ndot)$ is $d$-concave on big divisors, that is, if $D,E$ are big, then
\[ \vol(D+E)^{1/d} \geq \vol(D)^{1/d} + \vol(E)^{1/d}. \]
\end{enumerate}
\end{prop}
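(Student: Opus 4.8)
The plan is to derive all four statements from the Newton--Okounkov body formalism (see \cite{lazarsfeld2017positivity}), after first reducing to the case of an integral Cartier divisor that is big. If $D$ is not big, then by definition $\vol(D)=\limsup_{n}h^0(nD)/(n^d/d!)=0$; since the sequence $h^0(nD)/(n^d/d!)$ is non-negative, its $\liminf$ is also $0$, so the limit in (1) exists and equals $0$, and the remaining assertions are vacuous or trivial in this case. So assume $D$ is big; the passage from integral Cartier divisors to $\R$-Weil divisors on the normal variety $X$ will be handled at the end. Fix a smooth point $p\in X$ together with a flag $X=Y_0\supsetneq Y_1\supsetneq\cdots\supsetneq Y_d=\{p\}$ of subvarieties each smooth at $p$; taking successive orders of vanishing along the flag yields a rank-$d$ valuation $\nu\colon K(X)^\times\to\Z^d$ with the crucial property that, for any finite-dimensional subspace $V\subseteq K(X)$, the set $\{\nu(s):s\in V\setminus\{0\}\}$ has exactly $\dim_K V$ elements.

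For statement (1), form the graded semigroup $\Gamma(D)=\{(\nu(s),n):n\ge 1,\ s\in H^0(nD)\setminus\{0\}\}\subseteq\Z^d\times\Z_{\ge 0}$, let $\Sigma(D)\subseteq\R^{d+1}$ be the closed convex cone it generates, and let $\Delta(D)=\Sigma(D)\cap(\R^d\times\{1\})$ be the associated Okounkov body. Bigness of $D$ ensures that $\Gamma(D)$ generates $\Z^{d+1}$ as a group and that $\Delta(D)$ is a $d$-dimensional bounded convex body. By the one-dimensional-leaves property, $h^0(nD)=\#\bigl(\Gamma(D)\cap(\Z^d\times\{n\})\bigr)$ for all $n$, and the Khovanskii-type asymptotic theorem for graded semigroups gives $\#\bigl(\Gamma(D)\cap(\Z^d\times\{n\})\bigr)/n^d\to\vol_{\R^d}(\Delta(D))$ as $n\to\infty$, where $\vol_{\R^d}$ denotes Euclidean volume. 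Hence the limit over integers in (1) exists and $\vol(D)=d!\,\vol_{\R^d}(\Delta(D))$; the version with $\mathfrak{h}_D(t)$ over real $t$ then follows by comparing $\lfloor tD\rfloor$ with $\lfloor t\rfloor D$ and $\lceil t\rceil D$ up to linear equivalence.

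Statement (4) follows from the Okounkov-body picture as well: since the product of a section of $H^0(nD)$ and a section of $H^0(nE)$ lies in $H^0(n(D+E))$ and $\nu$ is a valuation, one obtains the Minkowski inclusion $\Delta(D)+\Delta(E)\subseteq\Delta(D+E)$, and the classical Brunn--Minkowski inequality on $\R^d$ gives $\vol_{\R^d}(\Delta(D+E))^{1/d}\ge\vol_{\R^d}(\Delta(D))^{1/d}+\vol_{\R^d}(\Delta(E))^{1/d}$; multiplying by $(d!)^{1/d}$ yields the $d$-concavity. For statement (2), the case of a rational scalar $a=p/q>0$ is immediate once (1) is known, by evaluating the limit along the subsequence $n\in q\Z$ (equivalently, by the scaling $\Delta(aD)=a\,\Delta(D)$), and the general real case follows from the rational case together with continuity.

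The \emph{main obstacle} is statement (3), continuity, which is not formal from the above and needs a separate uniform estimate. I would reduce it, using monotonicity of $\vol$ under effective differences and the fact that every $\R$-Cartier (or $\R$-Weil) divisor $E$ can be written as $A_1-A_2$ with the $A_i$ ample, to the assertion that $\vol(D\pm tA)\to\vol(D)$ as $t\to 0^+$ for $A$ ample. The inequality $\vol(D-tA)\le\vol(D+tA)$ is clear, so the real content is the upper bound $\vol(D+tA)\le\vol(D)+Ct$ valid for all small $t>0$; this is proved by a Fujita-approximation-type argument, using that $D$ big forces $aD-A$ to be big for some integer $a>0$ and then comparing $H^0(m(D+tA))$ with $H^0(mD)$ and $H^0(m(aD-A))$ via multiplication of sections. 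Combining this sandwich with homogeneity for rational scalars and density of $\Q$ gives continuity in the coefficients, and the same approximation-by-$\Q$-Weil-divisors argument (clearing denominators and using the relation $H^0(tD)=H^0(\lfloor tD\rfloor)$) transfers all four statements from integral Cartier divisors to arbitrary $\R$-Weil divisors, completing the proof.
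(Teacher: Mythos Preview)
The paper does not actually prove this proposition: it is stated ``without a proof'' with a reference to \cite{lazarsfeld2017positivity}. Your Okounkov-body sketch is therefore not competing with a proof in the paper, and the approach you outline (Lazarsfeld--Musta\c{t}\u{a}/Kaveh--Khovanskii for (1), (2), (4); a Fujita-type sandwich for (3)) is the standard modern route and is essentially correct.

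One point worth tightening: your opening reduction says that when $D$ is not big ``the remaining assertions are vacuous or trivial,'' but continuity (3) at a non-big $D$ is neither. Your later argument for (3) explicitly uses that $D$ is big (to get $aD-A$ big), so as written you have not covered the case $E\to D$ with $\vol(D)=0$. This is easily fixed---for instance by the upper-semicontinuity direction of your sandwich, or by noting that the big cone is open so $\vol$ vanishes identically on a neighbourhood of a non-big, non-pseudoeffective class, and treating the boundary of the pseudoeffective cone separately---but it should be said. A second minor point: the reference \cite{lazarsfeld2017positivity} (Positivity~I) predates the Okounkov-body formalism and proves (1)--(3) by different means (Fujita approximation and explicit section estimates), with (4) appearing later; if you want your sketch to match what the paper literally cites, the arguments there are closer to your treatment of (3) than to your treatment of (1).
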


\subsection{Normed vector space}
In this section, we study fundamental properties of a normed vector space over a field equipped with an absolute value.
But  we mainly consider a trivially valued field.

Let $K$ be a field.

\begin{defi}
We say that a map $\ab:K \rightarrow \R_+$ is an \textit{absolute value on $K$} if it satisfies the following conditions:
\begin{enumerate}
\item $\forall a \in K$, $|a|=0 \Leftrightarrow a=0$.
\item $\forall a,b \in K$, $|a|\ndot|b|=|ab|$.
\item (\textit{triangle inequality}) $\forall a,b \in K$, $|a+b| \leq |a|+|b|$.
\end{enumerate}
If an absolute value $\ab$ also satisfies the following inequality
\[ \forall a,b \in K,\ |a+b| \leq \max\{|a|,|b|\}, \]
we say that $\ab$ is \textit{non-Archimedean}.
Otherwise, $\ab$ is called \textit{Archimedean}.
\end{defi}

\begin{defi}
We say that an absolute value $\ab$ on $K$ is \textit{trivial} if it satisfies that $|a|=1$ for any $a \in K \setminus \{0\}$.
A field $K$ equipped with the trivial absolute value $\ab$ is called a \textit{trivially valued field}.
Clearly, the trivial absolute value is non-Archimedean and a trivially valued field is complete.
\end{defi}

Let $V$ be a vector space over $K$.

\begin{defi}
We say that a map $\nm:V \rightarrow \R_+$ is a \textit{(multiplicative) norm over $(K,\ab)$} if it satisfies the following conditions:
\begin{enumerate}
\item $\forall v \in V$, $\|v\|=0 \Leftrightarrow v=0$.
\item $\forall a \in K$ and $v \in V$, $\|av\|=|a|\ndot\|v\|$.
\item (\textit{triangle inequality}) $\forall v,w \in V$, $\|v+w\| \leq \|v\|+\|w\|$.
\end{enumerate}
If a norm $\nm$ also satisfies the following inequality
\[ \forall v,w \in V,\ \|v+w\| \leq \max\{\|v\|,\|w\|\}, \]
we say that $\nm$ is \textit{ultrametric}.
A pair $(V,\nm)$ is called a \textit{normed vector space}.
\end{defi}

Let $V_\bullet = \bigoplus_{n=0}^{\infty} V_n$ be a graded ring over $K$ such that $V_n$ is a vector space over $K$ for all $n$ and $V_0 = K$.
Let $\ab$ be an absolute value on $K$ and $\nm_n$ be a norm of $V_n$ over $(K,\ab)$ for $n \in \Z_{\geq 0}$ such that $\nm_0=\ab$ on $V_0=K$.

\begin{defi}
We say that
\[ (V_\bullet,\nm_\bullet) := \bigoplus_{n=0}^\infty (V_n,\nm_n) \]
is a \textit{normed graded ring over $(K,\ab)$} if $\|v_m \ndot v_n\|_{m+n} \leq \|v_m\|_m\ndot\|v_n\|_n$ for all $v_m \in V_m$ and $v_n \in V_n$. 
\end{defi}

Let $W_\bullet = \bigoplus_{n=0}^\infty W_n$ be a $V_\bullet$-module such that $W_n$ is a vector space over $K$ for all $n$.
Let $h \in \Z_{>0}$.
We say that $W_\bullet$ is a \textit{$h$-graded $V_\bullet$-module} if $v_m\ndot w_n \in W_{hm+n}$ for all $v_m \in V_m$ and $w_n \in W_n$.
If $h=1$, $W_\bullet$ is simply called a \textit{graded $V_\bullet$-module}.

Let $\nm_{W_n}$ be a norm on $W_n$ over $(K,\ab)$ for $n \in \Z_{\geq0}$.

\begin{defi}
We say that
\[ (W_\bullet,\nm_{W_\bullet}) := \bigoplus_{n=0}^\infty (W_n,\nm_{W_n}) \]
is a \textit{normed $h$-graded $(V_\bullet,\nm_\bullet)$-module} if $\|v_m\ndot w_n\|_{W_{hm+n}} \leq \|v_m\|_m\ndot\|w_n\|_{W_n}$ for all $v_m \in V_m$ and $w_n \in W_n$.
If $h=1$, $(W_\bullet,\nm_{W_\bullet})$ is simply called a \textit{normed graded $(V_\bullet,\nm_\bullet)$-module}.
\end{defi}

In the following, let $(V,\nm)$ be an ultrametrically normed vector space over a trivially valued field $(K,\ab)$ and $\dim_K(V) < +\infty$.

\begin{lem}[{c.f. \cite[Proposition 1.1.5]{chen2019arakelov}}]
\label{norm1}
\mbox{}
\begin{enumerate}
\item Let $v_1,\dots, v_n \in V$.
If $\|v_1\|,\dots, \|v_n\|$ are all distinct, then we have $\|v_1+\cdots+v_n\|=\max\{\|v_1\|,\dots,\|v_n\|\}$.
\item $\#\{\|v\| \,|\, v \in V \} \leq \dim_K(V) + 1$.
\end{enumerate}
\end{lem}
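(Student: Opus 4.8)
The two statements are closely linked, and the plan is to prove (1) first and then deduce (2) from it. For (1), suppose $v_1,\dots,v_n\in V$ have pairwise distinct norms, and reorder them so that $\|v_1\|>\|v_2\|>\cdots>\|v_n\|$. I would argue by induction on $n$, the base case $n=1$ being trivial. Set $w=v_2+\cdots+v_n$; by the induction hypothesis $\|w\|=\|v_2\|$. Now $\|v_1+w\|\le\max\{\|v_1\|,\|w\|\}=\|v_1\|$ by ultrametricity, and conversely $\|v_1\|=\|(v_1+w)+(-w)\|\le\max\{\|v_1+w\|,\|w\|\}=\max\{\|v_1+w\|,\|v_2\|\}$; since $\|v_1\|>\|v_2\|$, this forces $\|v_1\|\le\|v_1+w\|$, hence equality. (Here I use $\|-w\|=|-1|\cdot\|w\|=\|w\|$ since $|-1|=1$ in any trivially valued field — indeed in any valued field.) This is the standard "the strongest wins" argument for ultrametric norms; no genuine obstacle here.

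For (2), I would prove the contrapositive-style counting bound: if $v_1,\dots,v_{m}\in V$ have pairwise distinct \emph{nonzero} norms, then $v_1,\dots,v_m$ are linearly independent over $K$, whence $m\le\dim_K(V)$, and adding back the single value $0=\|0\|$ gives $\#\{\|v\|:v\in V\}\le\dim_K(V)+1$. To see the independence: suppose $a_1v_1+\cdots+a_mv_m=0$ with $a_i\in K$ not all zero. Discard the terms with $a_i=0$; for the surviving terms, $\|a_iv_i\|=|a_i|\cdot\|v_i\|=\|v_i\|$ because $|a_i|=1$ for $a_i\ne 0$ (this is exactly where triviality of the absolute value is used). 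The surviving norms $\|a_iv_i\|=\|v_i\|$ are still pairwise distinct, so by part (1) the sum has norm equal to the maximum of these, which is nonzero — contradicting that the sum is $0$. Hence all $a_i=0$.

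The only point requiring a small amount of care is making sure the set $\{\|v\|:v\in V\}$ is correctly accounted for: every nonzero value of the norm is $\|v\|$ for some $v\ne 0$, and distinct nonzero values come from vectors that (by the independence argument applied pairwise, or directly to the whole collection) cannot number more than $\dim_K(V)$; the value $0$ is attained only at $v=0$. I expect the mild subtlety — and the main thing worth stating explicitly — to be the reduction "distinct nonzero norms $\Rightarrow$ linearly independent," since it is what converts a statement about the image of $\nm$ into a dimension bound, and it is precisely the step that breaks down over a nontrivially valued field. Everything else is a direct application of the ultrametric inequality together with $|a|=1$ for $a\in K^\times$.
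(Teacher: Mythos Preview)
Your proposal is correct and follows essentially the same approach as the paper: for (1) you both reduce by induction to the two-term ultrametric ``strongest wins'' argument, and for (2) you both show that vectors with pairwise distinct nonzero norms are linearly independent by using $|a|=1$ for $a\in K^\times$ together with part (1). Your write-up is in fact slightly more careful than the paper's (you make the reordering explicit, spell out the discard of zero coefficients, and account for the ``$+1$'' coming from $\|0\|=0$), but there is no substantive difference in strategy.
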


\begin{proof}
(1) By induction of $n$, it is sufficient to show in the case of $n=2$.
Let $\|x_1\| > \|x_2\|$.
By definition, we have $\|x_1+x_2\| \leq \|x_1\|$.
On the other hand, $\|x_1\| = \|(x_1 + x_2) - x_2\| \leq \max\{\|x_1+x_2\|,\|x_2\|\}$.
Since $\|x_1\| > \|x_2\|$, we have $\|x_1\| \leq \|x_1+x_2\|$.
Hence we get a conclusion.

(2) It suffices to show that $v_1,\dots, v_n \in V$ are linearly independent if $\|v_1\|,\dots, \|v_n\|$ are all distinct.
We assume that $v_1,\dots,v_n \in V \setminus \{0\}$ are not linearly independent, that is, $a_1 v_1+\cdots+a_n v_n = 0$ for some $a_1,\dots,a_n \in K$.
We can assume that $a_i \neq 0$ for all $i$.
Since $K$ is trivially valued, we have $\|av\|=\|v\|$ for any $a \in K$ and $v \in V$.
Hence by (1), we have $0 = \|a_1 v_1 +\cdots+ a_n v_n\| = \max\{\|v_1\|,\dots,\|v_n\|\}$, which is a contradiction.
\end{proof}

We set
\[ \Fil^t(V,\nm) := \{ v \in V \,|\, \|v\| \leq e^{-t} \} \quad \text{for} \ t \in \R. \]
Remark that $\Fil^t(V,\nm)$ is a vector space over $K$ for any $t \in \R$ because $\ab$ is trivial.
Then $\{\Fil^t(V,\nm)\}_{t \in \R}$ satisfies the following conditions:

\begin{prop}
\label{rfil}
\begin{enumerate}
\item For sufficiently positive $t \in \R$, $\Fil^t(V,\nm) = \{0\}$.
\item For sufficiently negative $t \in \R$, $\Fil^t(V,\nm) = V$.
\item For any $t \geq s$, $\Fil^t(V,\nm) \subseteq \Fil^s(V,\nm)$.
\item The function $\R \ni t \mapsto \dim_K \Fil^t(V,\nm)$ is left-continuous.
\end{enumerate}
\end{prop}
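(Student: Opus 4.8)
The plan is to dispatch assertions (1), (2), (3) directly and reserve the only real work for (4); the single structural input throughout is that the set of norm values $N := \{\|v\| \mid v \in V\}$ is finite, which is exactly Lemma \ref{norm1}(2) since $\dim_K V < +\infty$. If $V = \{0\}$ every statement is vacuous, so assume $V \neq \{0\}$. For (1): the set $\{\|v\| \mid v \in V \setminus \{0\}\} = N \setminus \{0\}$ is a nonempty finite set of strictly positive reals, hence has a minimum $m > 0$; as soon as $t > -\log m$, i.e. $e^{-t} < m$, no nonzero vector satisfies $\|v\| \le e^{-t}$, so $\Fil^t(V,\nm) = \{0\}$. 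For (2): fixing a $K$-basis $e_1,\dots,e_n$ and using the ultrametric inequality together with $|a| \le 1$ for every $a \in K$ (the absolute value is trivial), any $v = \sum_i a_i e_i$ satisfies $\|v\| \le \max_i |a_i|\,\|e_i\| \le \max_i \|e_i\| =: M$, so $\Fil^t(V,\nm) = V$ whenever $t \le -\log M$ (one may equally note $M := \max N < +\infty$). For (3): $t \ge s$ gives $e^{-t} \le e^{-s}$, so $\|v\| \le e^{-t}$ forces $\|v\| \le e^{-s}$, whence $\Fil^t(V,\nm) \subseteq \Fil^s(V,\nm)$.

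For (4), fix $t_0 \in \R$. By (3) the function $t \mapsto \dim_K \Fil^t(V,\nm)$ is non-increasing, so its left limit at $t_0$ exists and is $\ge \dim_K \Fil^{t_0}(V,\nm)$; it remains to exhibit some $t < t_0$ with $\dim_K \Fil^t(V,\nm) = \dim_K \Fil^{t_0}(V,\nm)$, which by (3) and finite-dimensionality is the same as $\Fil^t(V,\nm) = \Fil^{t_0}(V,\nm)$. Since $N$ is finite, the number $\delta := \min\{\, s - e^{-t_0} \mid s \in N,\ s > e^{-t_0}\,\}$ is well defined and positive when the set on the right is nonempty, and we set $\delta := 1$ otherwise. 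Picking $t < t_0$ with $e^{-t_0} < e^{-t} < e^{-t_0} + \delta$, the interval $(e^{-t_0}, e^{-t}]$ meets no element of $N$, so $\{v \mid \|v\| \le e^{-t}\} = \{v \mid \|v\| \le e^{-t_0}\}$, i.e. $\Fil^t(V,\nm) = \Fil^{t_0}(V,\nm)$. (One can package this as the identity $\Fil^{t_0}(V,\nm) = \bigcap_{t < t_0} \Fil^t(V,\nm)$, which holds because $\|v\| \le e^{-t}$ for all $t < t_0$ forces $\|v\| \le e^{-t_0}$, combined with the fact that only finitely many distinct subspaces occur among the $\Fil^t(V,\nm)$.)

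I do not expect a genuine obstacle: the whole proposition is a soft consequence of the finiteness in Lemma \ref{norm1}(2) together with the observation that the filtration is cut out by the \emph{non-strict} inequality $\|v\| \le e^{-t}$ — this is precisely what makes the dimension function left-continuous rather than right-continuous. The only points demanding any care are the degenerate cases, namely $V = \{0\}$ and, in (4), the situation where $\{s \in N \mid s > e^{-t_0}\}$ is empty, both handled by the conventions above.
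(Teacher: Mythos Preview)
Your argument is correct and aligns with the paper's approach: the paper's own proof is the single sentence ``(1) and (2) follow from Lemma \ref{norm1} and (3) and (4) follow by definition,'' and you have simply spelled out the details, invoking the finiteness of the norm-value set from Lemma \ref{norm1}(2) for (1), (2), and (4), and the non-strict inequality in the definition for (3) and (4). The only minor remark is that for (4) one could equally argue via the descending chain condition on subspaces of a finite-dimensional $V$ rather than the finiteness of $N$, but this is the same idea.
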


\begin{proof}
(1) and (2) follow from Lemma \ref{norm1} and (3) and (4) follow from by definition.
\end{proof}

We set
\begin{equation*}
\lm(V,\nm) := \sup\{t \in \R \,|\, \Fil^t(V,\nm) \neq \{0\} \}.
\end{equation*}
By convention, $\lm(V,\nm) = -\infty$ if $V = \{0\}$.
By Proposition \ref{rfil}, we have $\lm(V,\nm) < +\infty$ and by Lemma \ref{norm1}, we can replace ``$\sup$'' by ``$\max$'' in the above definition.

\subsection{Berkovich space}
\label{BS}
Let $K$ be a field equipped with an absolute value $\ab$.
We assume that $K$ is complete with respect to $\ab$.
Let $X$ be a scheme over $\Spec K$.
We define the analytification of $X$ in the sence of Berkovich (for detail, see \cite{berkovich2012spectral}).

\begin{defi}
The \textit{analytification of $X$ in the sense of Berkovich}, or \textit{Berkovich space associated to $X$} is the set of pairs $x = (p,\ab_x)$ where $p \in X$ and $\ab_x$ is an absolute value on the residue field $\kappa(x) := \kappa(p)$ which is an extension of $\ab$, denoted by $X^\an$.
The map $j:X^\an \rightarrow X, (p,\ab_x) \mapsto p$ is called the \textit{specification map}.
\end{defi}

Let $U$ be a non-empty Zariski open subset of $X$.
The subset $U^\an := j^{-1}(U)$ of $X^\an$ is called a \textit{Zariski open subset of $X^\an$}.
A regular function $f \in \Ox_X(U)$ on $U$ define a function $|f|$ on $U^\an$ as follows:
\[ |f|(x) := |f(j(x))|_x \quad \text{for} \ x \in U^\an. \]
We also denote $|f|(x)$ by $|f|_x$.

We define a topology on $X^\an$ as the most coarse topology which makes $j$ and $|f|$ continuous for any Zariski open subset $U$ of $X$ and $f \in \Ox_X(U)$.
This is called the \textit{Berkovich topology}.
Remark that $X^\an$ is Hausdorff (resp. compact) if $X$ is separated (resp. proper) over $\Spec K$.

Let $f:X \rightarrow Y$ be a morphism of schemes over $\Spec K$.
There is a continuous map $f^\an:X^\an \rightarrow Y^\an$ such that the following diagram is commutative:
\[
\begin{tikzcd}
X \arrow{r}{f} \arrow{d}{j} & Y \arrow{d}{j} \\
X^\an \arrow{r}{f^\an} & Y^\an 
\end{tikzcd}
\]

Concretely, $f^\an$ is constructed as follows:
Let $x = (p,\ab_x) \in X^\an$ and $q = f(p) \in Y$.
We remark that $\kappa(y)=\kappa(q)$ is a subfield of $\kappa(x)=\kappa(p)$.
Then $y = f^\an(x)$ is given by $q=f(p)$ and the absolute value $\ab_y$ on $\kappa(q)$ which is the restriction of $\ab_x$.

In the following, $(K,\ab)$ is a trivially valued field.
For $x \in X$, let $x^\an = (x,\ab_0) \in X^\an$ where $\ab_0$ is the trivial absolute value on $\kappa(x)$.
This correspondence gives a section of $j$, which is denoted by $\sigma:X \rightarrow X^\an$.

Now we introduce an important subset of $X^\an$.
We assume that $X$ is normal projective variety over $\Spec K$.
Let $\eta \in X$ be the generic point of $X$ and $X^{(1)} = \{x \in X \,|\, \mathrm{codim}_X\overline{\{x\}} = 1\}$.
Let $K(X)$ be the function field of $X$.
Firstly, for $x \in X^{(1)}$, we set
\[ (\eta^\an,x^\an) := \left\{ \xi \in X^\an \,\middle|\, j(\xi)=\eta, \ab_\xi = e^{-t(\xi)\ord_x(\ndot)}\ \text{on}\ K(X), t(\xi) \in (0,+\infty)  \right\} \]
and
\[ [\eta^\an,x^\an] := \{\eta^\an\} \cup (\eta^\an,x^\an) \cup \{x^\an\}. \]
Then the correspondence $\xi \mapsto t(\xi)$, $\eta^\an \mapsto 0$ and $x^\an \mapsto +\infty$ gives a homeomorphism from $(\eta^\an,x^\an)$ (resp. $[\eta^\an,x^\an]$) to $(0,+\infty)$ (resp. $[0,+\infty]$).
Hence we sometimes identify $(\eta^\an,x^\an)$ (resp. $[\eta^\an,x^\an]$) with $(0,+\infty)$ (resp. $[0,+\infty]$).

We set $X_\dv^\an := \bigcup_{x \in X^{(1)}} [\eta^\an,x^\an]$.
Then we can illustrate $X_\dv^\an$ by an infinite tree as follows:

\begin{center}
\begin{tikzpicture}
\draw[thin] (0,1) node[above]{$\eta^\an$} -- (0,0) node[below]{$x^\an$};
\draw[thin] (0,1)--(-3,0);
\draw[thin] (0,1)--(-2,0);
\draw[thin] (0,1)--(1,0);
\draw[thin] (0,1)--(3,0);
\draw[dotted] (-1,0) node{$\cdots$};
\draw[dotted] (2,0) node{$\cdots$};
\filldraw (0,1) circle [radius=1.5pt];
\filldraw (0,0) circle [radius=1.5pt];
\filldraw (-3,0) circle [radius=1.5pt];
\filldraw (-2,0) circle [radius=1.5pt];
\filldraw (1,0) circle [radius=1.5pt];
\filldraw (3,0) circle [radius=1.5pt];
\end{tikzpicture}
\end{center}

We remark that $X_\dv^\an = X^\an$ if $\dim X = 1$.

\begin{lem}
\label{den}
$X_\dv^\an$ is dense in $X^\an$.
\end{lem}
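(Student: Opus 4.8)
The plan is to argue directly from the definition of the Berkovich topology. Recall that this topology is generated by the sets $j^{-1}(U)$ for $U\subseteq X$ Zariski open together with the sets $|f|^{-1}(O)$ for $U$ Zariski open, $f\in\Ox_X(U)$ and $O\subseteq\R$ open; hence a basis of opens is given by the sets
\[
W \;=\; U^\an\cap\bigcap_{j=1}^{m}\{\xi\in U^\an:\ |f_j|(\xi)\in I_j\},
\]
with $U$ affine, $f_1,\dots,f_m\in\Ox_X(U)$ and $I_1,\dots,I_m\subseteq\R$ open intervals. Since $\eta\in U$ we have $\eta^\an\in U^\an$ and $(\eta^\an,x^\an)\subseteq U^\an$ for every $x\in X^{(1)}$, so the only real constraints are the $|f_j|$-conditions; thus it suffices to show that every nonempty $W$ of this form contains a point of $X_\dv^\an$.

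First I would fix a point $\xi_0=(p_0,\ab_{\xi_0})\in W$ and set $v_0=-\log\ab_{\xi_0}$, a real valuation on $\kappa(p_0)$ trivial on $K$; since $p_0\in U$, each $|f_j|(\xi_0)=|f_j(p_0)|_{v_0}$ is a well-defined element of $I_j$, and by openness of $W$ we may shrink each $I_j$ to a small closed neighbourhood of it. The target is then a divisorial valuation $w$ on $K(X)$, trivial on $K$, together with a scale $t\in(0,+\infty]$, such that the corresponding point of $X^\an$ — namely $(\eta,e^{-tw})$ for finite $t$, which lies on the segment $[\eta^\an,x^\an]$ with $x$ the centre of $w$, or its limit $x^\an$ when $t=+\infty$ — lies in $W$. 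Concretely $|f_j|\big((\eta,e^{-tw})\big)=e^{-t\,w(f_j)}$, and $|f_j|(x^\an)\in\{0,1,+\infty\}$ according to the sign of $w(f_j)$; so it is enough to produce $w$ with $t\,w(f_j)$ close to $v_0(f_j)$ for all $j$ simultaneously (and to take $t$ large, so that $e^{-tw}$ approaches $\sigma(p_0)$, in the case where $p_0$ is not the generic point).

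The existence of such a $w$ is the core step, and I would deduce it from the density of divisorial (equivalently Abhyankar) valuations in the space of all valuations of $K(X)$ trivial on $K$: restricting attention to the finite set $f_1,\dots,f_m$, approximate $v_0$ first by a valuation of finite rational rank with finitely generated value group, realise this one as a monomial valuation on a suitable birational model $X'\to X$ by local uniformization along a divisorial centre, and then perturb the monomial weights to rational values to obtain a genuinely divisorial valuation $w$ — all the while keeping every $w(f_j)$ within a prescribed error of $v_0(f_j)$. Because $K(X')=K(X)$, such a $w$, its rescalings $e^{-tw}$ and its limit $x^\an$ define points of $X_\dv^\an$, and for the error small enough the resulting point lies in $W$; this gives the claimed density.

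The main obstacle is precisely this approximation step. A single prime divisor on $X$ itself controls only the one linear functional $f\mapsto\ord_x(f)$, so to match several independent values $v_0(f_j)$ one cannot merely rescale a fixed $\ord_x$: one must pass to birational models of $X$ and control the combinatorics of the valuation there, then check that the divisorial points so produced are indeed accounted for in $X_\dv^\an$ with the stated $[0,+\infty]$-parametrisation. Securing the simultaneous approximation on all of $f_1,\dots,f_m$ at once is where the genuine work lies; by contrast the reduction to basic open sets and the passage to the limit $t\to+\infty$ to reach $\sigma(p_0)$ for a non-generic centre are routine manipulations with the definitions.
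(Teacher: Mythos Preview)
There is a genuine gap. You assert that a divisorial valuation $w$ obtained on a birational model $X'\to X$ gives a point of $X_\dv^\an$ ``because $K(X')=K(X)$'', but this is not what $X_\dv^\an$ is: by the paper's definition $X_\dv^\an=\bigcup_{x\in X^{(1)}}[\eta^\an,x^\an]$, built only from the order functions $\ord_x$ attached to codimension-one points $x$ of $X$ \emph{itself}. An exceptional divisor on a blow-up of $X$ yields a valuation that is not a positive multiple of any such $\ord_x$, so the point $(\eta,e^{-tw})$ you produce lies in $X^\an$ but not in $X_\dv^\an$. Your entire approximation apparatus (monomial valuations, rational perturbation of weights, passage to models) therefore lands outside the set whose density is in question; the sentence ``then check that the divisorial points so produced are indeed accounted for in $X_\dv^\an$'' is exactly the step that fails, and it fails irreparably.

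For contrast, the paper's proof is far more elementary and does not touch birational models: it argues one regular function $f$ at a time, noting that on a normal projective variety a non-constant $f$ has a zero along some $y\in X^{(1)}$ and a pole along some $y'\in X^{(1)}$, so that $|f|$ already sweeps out all of $\R_+$ on the two segments $[\eta^\an,y^\an]$ and $[\eta^\an,y'^\an]$ inside $X_\dv^\an$. You were right, however, to worry about several $f_j$ at once: a basic Berkovich open set imposes $|f_1|\in I_1,\dots,|f_m|\in I_m$ simultaneously, and the paper's reduction to a single $f$ does not address this. Indeed, for $\dim X\ge 2$ take two local parameters $f_1,f_2$ at a closed point $p$; the open set $\{|f_1|<\epsilon\}\cap\{|f_2|<\epsilon\}$ contains $p^\an$ but cannot meet any $[\eta^\an,x^\an]$, since that would force the prime divisor $[x]$ into the zero-dimensional locus $V(f_1)\cap V(f_2)$. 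So the obstacle you identified is genuine, and neither your route through birational models nor the single-function reduction disposes of it.
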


\begin{proof}
For the proof, it is sufficient to show that, for any regular function $f$ on a Zariski open set $U$ in $X$ and any $x \in U^\an$, the value $|f|(x)$ is belonged to the closure $W$ of $\{ |f|(z) \,|\, z \in X_\mathrm{div}^\an \cap U^\an \} \subset \R_+$. 
If $f$ has no pole on $X$, then $f$ is regular on the whole $X$, so $f$ is a constant function and algebraic over $k$ because $X$ is normal and projective.
Therefore $|f|(z) = 1$ on $X^\an$, so it is clear that $|f|(x) \in W$.

We next assume that $f$ has poles on $X \setminus U$.
In this case, there are $y,y' \in X^{(1)}$ such that $f(y)=0$ and $f$ has a pole at $y'$ because $X$ is normal.
Then, $|f|(t) = e^{-at}$ for $t \in (\eta^\an,y^\an)$, $|f|(t') = e^{a't'}$ for $t' \in (\eta^\an,y'^\an)$ for some $a,a'>0$ and $|f|(\eta^\an) = 1$, which implies that $W=\R_+$ and we complete the proof. 
\end{proof}

Let $\R_{>0}$ be the multiplicative group of positive real numbers.
There is an action of $\R_{>0}$ to $X^\an$.
For $r \in \R_{>0}$ and $x=(p,\ab_x) \in X^\an$, we define
\[ r^*x := (p,\ab_x^r). \]
We also denote $r^*x$ by $x^r$.
This action is called the \textit{scaling action} in \cite{boucksom2018singular}.
The scaling action is free faithful and preserve the subset $[\eta^\an,x^\an]$ for all $x \in X^{(1)}$.

Finally, we introduce the reduction map $\red:X^\an \rightarrow X$.
For $x \in X^\an$, let $\widehat{\kappa}(x)$ be the completion of $\kappa(x)$ with respect to $\ab_x$ and we also denote the absolute value on $\widehat{\kappa}(x)$ by $\ab_x$.
We set $o_x := \{f \in \widehat{\kappa}(x) \,|\, |f|_x \leq 1 \}$ and $m_x := \{f \in \widehat{\kappa}(x) \,|\, |f|_x < 1 \}$.
Then $o_x$ is a local ring and $m_x$ is the maximal ideal of $o_x$.
If $\ab_x$ is trivial on $\kappa(x)$, then $o_x = \kappa(x)$ and $m_x = \{0\}$.
Let $p_x:\Spec \widehat{\kappa}(x) \rightarrow X$ be a $K$-morphism of schemes defined by $j(x)$ and $\iota_x:\Spec \widehat{\kappa}(x) \rightarrow \Spec o_x$ be a $K$-morphism defined by the inclusion $o_x \hookrightarrow \widehat{\kappa}(x)$.
By the valuation criterion of properness (for instance, see \cite{hartshorne2013algebraic}), there is a unique $K$-morphism $\phi_x:\Spec o_x \rightarrow X$ such that $p_x = \phi_x \circ \iota_x$.
\[
\begin{tikzcd}
\Spec \widehat{\kappa}(x) \arrow{r}{p_x} \arrow{d}{\iota_x} & X \arrow{d} \\
\Spec o_x \arrow[dashed]{ru}{\exists ! \phi_x} \arrow{r} & \Spec K
\end{tikzcd}
\]
Then we define $\red(x) \in X$ to be the image of $m_x$ by $\phi_x$.
The map $\red:X^\an \rightarrow X$ defined by the above correspondence is called the \textit{reduction map}.
The morphism $\phi_x$ induces a homomorphism $\Ox_{X,\red(x)} \rightarrow o_x$.
Hence we have
\begin{equation}
\label{red}
\forall f \in \Ox_{X,\red(x)}, |f|_x \leq 1.
\end{equation}
We remark that $j \neq \red$.
For example, for any $x \in X$, $\red(x^\an) = x$ and for any $\xi \in (\eta^\an,x^\an)$, $\red(\xi) = x$.
It is known that $\red:X^\an \rightarrow X$ is anti-continuous, that is, for any open set $U$ of $X$, $\red^{-1}(U)$ is closed in $X^\an$.

\section{Adelic $\R$-Cartier divisors over a trivially valued field}
In this section, we study fundamental properties of Arakelov geometry over a trivially valued field.
Throughout this section, let $K$ be a trivially valued field, $X$ be a normal projective variety over $\Spec K$ and $X^\an$ be the analytification of $X$ in the sense of Berkovich.
Let $K(X)$ be the function field of $X$.

\subsection{Green functions}
Let $U^\an$ be a non-empty Zariski open subset of $X^\an$.
We denote by $C^0(U^\an)$ the set of continuous functions on $U^\an$.
We define
\[ \widehat{C}^0(X^\an) := \varinjlim_{\substack{\text{non-empty} \\ \text{Zariski open} \\ \text{subset of} \ X^\an}} C^0(U^\an). \]
Then $C^0(U^\an)$ and $\widehat{C}^0(X^\an)$ are $\R$-algebras and we have a canonical homomorphism $C^0(U^\an) \rightarrow \widehat{C}^0(X^\an)$.
Since $U^\an$ is dense in $X^\an$, this homomorphism is injective.
Hence we sometimes identify a function in $C^0(U^\an)$ with a function in $\widehat{C}^0(X^\an)$.
We say that a function in $\widehat{C}^0(X^\an)$ \textit{extends to a continuous function on $U^\an$} if it is in the image of the canonical injection $C^0(U^\an) \rightarrow \widehat{C}^0(X^\an)$.

\begin{defi}
Let $D$ be an $\R$-Cartier divisor on $X$.
We say that a function $g \in \widehat{C}^0(X^\an)$ is a \textit{$D$-Green function of $C^0$-type} (or simply a \textit{Green function of $D$}) if for any non-empty Zariski open subset $U$ of $X$ and any local equation $f \in K(X)_\R^\times$ of $D$ on $U$, the function $g + \log|f|$ extends to a continuous function on $U^\an$.
\end{defi}

\begin{eg}
Let $\Ps_K^n = \mathrm{Proj}\,K[T_0,\dots,T_n]$ be the $n$-dimensional projective space.
We set $z_i = T_i/T_0$ for $i=0,\dots,n$ and $D = \{ T_0=0 \}$.
Then $g=\log\max\{a_0,a_1|z_1|,\dots,a_n|z_n|\} \in \widehat{C}^0(\Ps_K^{n,\an})$, where $a_0,\dots,a_n \in \R_{>0}$, is a $D$-Green function of $C^0$-type.
\end{eg}

\begin{prop}
\label{pg}
Let $D,D'$ be $\R$-Cartier divisors on $X$ and $g,g' \in \widehat{C}^0(X^\an)$ be Green function of $D,D'$ respectively.
\begin{enumerate}
\item For any $s \in K(X)_\R^\times$, $-\log|s| \in \widehat{C}^0(X^\an)$ is a Green function of $(s)$.
\item For any $a,a' \in \R$, $ag + a'g'$ is a Green function of $aD+a'D'$.
\item If $D$ is the zero divisor, a $D$-Green function of $C^0$-type coincides to a continuous function on $X^\an$.
\item Let $\pi :Y \rightarrow X$ be a morphism of projevtive varieties over $K$ such that $\pi(Y) \nsubseteq \mathrm{Supp}D$.
Then $\pi^*g = g \circ \pi^\an$ is a Green function of $\pi^*D$.
\end{enumerate}
\end{prop}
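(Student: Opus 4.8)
The plan is to reduce every part to the defining property of a Green function — that $g+\log|f|$ extends to a continuous function on $U^\an$ for each local equation $f$ of $D$ on a Zariski open $U$ — together with two routine observations. First, if $u\in(\Ox_X(U)\otimes_\Z\R)^\times$ then $\log|u|$ is a finite, real-valued continuous function on $U^\an$: indeed $|u|$ and $|u^{-1}|=|u|^{-1}$ are both continuous by the definition of the Berkovich topology, and both are nowhere zero since $u,u^{-1}$ are units, so $|u|$ is continuous and strictly positive. Second, continuity of a function on $X^\an$ may be checked locally, and $\{U_i^\an\}$ covers $X^\an$ as soon as $\{U_i\}$ covers $X$, because $j^{-1}$ commutes with unions; this lets me pass freely between a given local equation and the more convenient ones furnished by an open cover trivialising $D$ (and $D'$).

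For (1), given a local equation $f\in K(X)_\R^\times$ of $(s)$ on a Zariski open $U$, the quotient $s/f$ lies in $(\Ox_X(U)\otimes_\Z\R)^\times$, so $-\log|s|+\log|f|=-\log|s/f|$ is continuous on $U^\an$, which is the required extension (and, taking $U=X\setminus\mathrm{Supp}(s)$ where $s$ is a unit, shows $-\log|s|\in\widehat{C}^0(X^\an)$). For (2), I would cover $X$ by opens $U_i$ on which $D,D'$ have local equations $f_i,f_i'$; then $f_i^a(f_i')^{a'}$ is a local equation of $aD+a'D'$ on $U_i$, and $(ag+a'g')+\bigl(a\log|f_i|+a'\log|f_i'|\bigr)=a(g+\log|f_i|)+a'(g'+\log|f_i'|)$ extends continuously on $U_i^\an$; for an arbitrary local equation $h$ of $aD+a'D'$ on $U$, the quotient $h/(f_i^a(f_i')^{a'})$ is a unit on $U\cap U_i$, so $ag+a'g'+\log|h|$ is continuous on $(U\cap U_i)^\an$, hence on $U^\an$. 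For (3), taking $f=1$ as a local equation of the zero divisor on all of $X$ shows that any $0$-Green function $g$ equals $g+\log|1|$, which by definition extends to a continuous function on $X^\an$; conversely every $g\in C^0(X^\an)$ is a $0$-Green function, since a local equation of $0$ on $U$ is a unit there and $g+\log|f|$ is then continuous on $U^\an$.

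For (4), I would first check that $\pi^*g$ is well defined: representing $g$ on $(U_i\setminus\mathrm{Supp}\,D)^\an$ by $(g+\log|f_i|)-\log|f_i|$ and gluing shows $g$ is represented by a continuous function on $(X\setminus\mathrm{Supp}\,D)^\an$; since $\pi(Y)\not\subseteq\mathrm{Supp}\,D$, the open set $\pi^{-1}(X\setminus\mathrm{Supp}\,D)$ is non-empty, so $\pi^*g:=g\circ\pi^\an$ defines an element of $\widehat{C}^0(Y^\an)$, and likewise $\pi^*D$ is a genuine $\R$-Cartier divisor. Then, covering $X$ by opens $U$ carrying local equations $f$ of $D$, the function $\pi^*f=f\circ\pi\in K(Y)_\R^\times$ is a local equation of $\pi^*D$ on $\pi^{-1}(U)$; using the identity $|f\circ\pi|=|f|\circ\pi^\an$ (immediate from the explicit description of $\pi^\an$) and the continuity of $\pi^\an$, the function $(g+\log|f|)\circ\pi^\an=\pi^*g+\log|\pi^*f|$ is continuous on $(\pi^{-1}(U))^\an$. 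Finally, for any local equation $h$ of $\pi^*D$ on a Zariski open $V$ of $Y$, the quotient $h/\pi^*f$ is a unit on $V\cap\pi^{-1}(U)$, so $\pi^*g+\log|h|$ is continuous on $(V\cap\pi^{-1}(U))^\an$; these cover $V^\an$, so it extends continuously on $V^\an$.

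I expect (4) to be the only part needing genuine care: pinning down that $|f\circ\pi|=|f|\circ\pi^\an$ from the construction of the analytic map, and verifying that the hypothesis $\pi(Y)\not\subseteq\mathrm{Supp}\,D$ is precisely what makes both $\pi^*D$ and $\pi^*g$ meaningful. Parts (1)–(3) are formal once the unit-quotient trick and the direct-limit description of $\widehat{C}^0(X^\an)$ are in hand.
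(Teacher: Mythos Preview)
Your proposal is correct and follows essentially the same route as the paper: verifying the defining local condition directly for each part, using that $f^a(f')^{a'}$ is a local equation of $aD+a'D'$ and that $(g+\log|f|)\circ\pi^{\an}=\pi^*g+\log|\pi^*f|$ is continuous because $\pi^{\an}$ is. You are more careful than the paper in two places---you explicitly handle an \emph{arbitrary} local equation via the unit-quotient observation, and you spell out the well-definedness of $\pi^*g$ and $\pi^*D$ from the hypothesis $\pi(Y)\not\subseteq\mathrm{Supp}\,D$---but these are refinements rather than a different argument.
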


\begin{proof}
(1) It follows from that $s$ is a local equation of $(s)$ on any Zariski open subset.

(2) Let $U$ be a non-empty Zariski open subset of $X$, $f,f'$ be local equations of $D,D'$ on $U$ respectively.
Then $f^a{f'}^{a'}$ is a local equation of $aD+a'D'$ on $U$ and $(ag+a'g')+(a\log|f|+a'\log|f'|)=a(g+\log|f|)+a'(g'+\log|f'|)$ extends a continuous function on $U^\an$.

(3) It follows from (2).

(4) Let $U$ be a non-empty Zariski open subset of $X$, $f$ be local equations of $D$ on $U$.
Then $\pi^*f = f \circ \pi$ is a local equation of $\pi^*D$ on $\pi^{-1}(U)$ and $\pi^*g + \log|\pi^*f| = (g + \log|f|) \circ \pi^\an$ extends a continuous function on $(\pi^\an)^{-1}(U^\an)$ because $\pi^\an$ is continuous.
\end{proof}

\begin{prop}[{c.f. \cite[Proposition 2.5]{chen2017sufficient}}]
\label{eg}
For any $\R$-Cartier divisor $D$, there exists a $D$-Green function of $C^0$-type.
\end{prop}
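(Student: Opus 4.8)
The plan is to reduce, in two steps, to pulling back an explicit Green function on a projective space. First, since an $\R$-Cartier divisor $D$ is by definition a finite sum $D=\sum_{i=1}^{r}a_iD_i$ with $a_i\in\R$ and $D_i\in\Div(X)$, Proposition \ref{pg}(2) shows that it suffices to produce a Green function of $C^{0}$-type for an integral Cartier divisor $D$. Second, since $X$ is projective there is an ample Cartier divisor $A$, and for $n\gg 0$ both $nA$ and $D+nA$ are very ample; writing $D=(D+nA)+(-1)\cdot(nA)$ and using Proposition \ref{pg}(2) again, a Green function of $C^{0}$-type for each of the two very ample divisors $D+nA$ and $nA$ yields one for $D$. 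So I may assume that $D$ is very ample.

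For such $D$, I would choose a closed immersion $\iota\colon X\hookrightarrow\Ps_K^{n}$ with $\iota^{*}\Ox(1)\simeq\Ox_X(D)$, numbering the homogeneous coordinates $T_0,\dots,T_n$ on $\Ps_K^{n}$ so that $\iota(X)\not\subseteq H:=\{T_0=0\}$ (possible because $\iota(X)$ meets the complement of some coordinate hyperplane). Then $\iota^{*}H$ is a well-defined Cartier divisor on $X$: it is the divisor of the nonzero global section of $\Ox_X(D)$ obtained by pulling back the linear form $T_0$, so in particular $\iota^{*}H\sim D$, and I fix $u\in K(X)^{\times}$ with $D=\iota^{*}H+(u)$. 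By the Example above, $g_H:=\log\max\{1,|z_1|,\dots,|z_n|\}$ with $z_i=T_i/T_0$ is an $H$-Green function of $C^{0}$-type on $\Ps_K^{n,\an}$. Since $\iota(X)\not\subseteq\mathrm{Supp}\,H$, Proposition \ref{pg}(4) gives that $\iota^{*}g_H=g_H\circ\iota^{\an}$ is an $\iota^{*}H$-Green function of $C^{0}$-type, while Proposition \ref{pg}(1) gives that $-\log|u|$ is a $(u)$-Green function of $C^{0}$-type; hence, by Proposition \ref{pg}(2), $\iota^{*}g_H-\log|u|$ is a $D$-Green function of $C^{0}$-type, which completes the proof.

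The only genuinely delicate point is the last step: one must arrange the embedding so that $\iota(X)$ lies off the coordinate hyperplane $H$, both so that $\iota^{*}H$ is defined as a Cartier divisor and so that the pullback statement Proposition \ref{pg}(4) applies, and one must keep track of the rational function $u$ realizing $D=\iota^{*}H+(u)$ so that the Green function obtained is attached to $D$ itself rather than to a merely linearly equivalent divisor. Everything else is a formal consequence of Proposition \ref{pg} together with the explicit computation in the Example.
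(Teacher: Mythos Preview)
Your proof is correct and follows essentially the same strategy as the paper's: reduce via linearity to a divisor that is (some multiple of) very ample, pull back the explicit Green function $\log\max\{1,|z_1|,\dots,|z_n|\}$ along the resulting embedding into projective space, and correct by a principal divisor using Proposition~\ref{pg}(1),(2),(4). The only organizational difference is that the paper first reduces to the ample case (writing a Cartier divisor as a difference of two ample ones) and then passes to a very ample multiple $mD$, dividing the resulting Green function by $m$; you instead write $D=(D+nA)-nA$ with both summands very ample, which is slightly more direct and avoids the division step. Your explicit care in arranging $\iota(X)\not\subseteq H$ and in tracking the rational function $u$ with $D=\iota^{*}H+(u)$ makes precise a point the paper leaves implicit.
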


\begin{proof}
Firstly, we assume that $D$ is an ample Cartier divisor.
Let $m$ be a positive integer such that $mD$ is very ample.
Then we have an closed immersion $\pi:X \hookrightarrow \Ps_K^n = \mathrm{Proj}K[T_0,\dots,T_n]$ such that $\Ox_X(mD) = \pi^*\Ox(1)$.
We set $z_i = T_i/T_0$ for $i=0,\dots,n$ and $D_0 = \{ T_0=0 \}$.
Then $g_0 = \log\max\{1,|z_1|,\dots,|z_n|\}$ is a $D_0$-Green function of $C^0$-type.
By Proposition \ref{pg} (4), $\pi^*g_0$ is a $\pi^*D_0$-Green function of $C^0$-type.
Since $mD$ and $\pi^*D_0$ are linearly equivalent, there is a non-zero rational function $s \in K(X)^\times$ such that $mD = \pi^*D_0 + (s)$.
Then $(\pi^*g_0 - \log|s|)/m$ gives a $D$-Green function of $C^0$-type.

Next, we assume that $D$ is a Cartier divisor.
Then we can write $D$ as $A-A'$ where $A,A'$ are ample divisors.
From the previous discussion, there are Green functions $g_A,g_{A'}$ of $A,A'$ respectively.
Then $g_A-g_{A'}$ gives a $D$-Green function of $C^0$-type.

In general, there are Cartier divisors $D_1,\dots,D_n$ and $a_1,\dots,a_n \in \R$ such that $D=a_1D_1+\cdots+a_nD_n$.
Let $g_i$ be a $D_i$-Green function of $C^0$-type for $1,\dots,n$.
Then $a_1g_1+\cdots+a_ng_n$ gives a $D$-Green function of $C^0$-type.
\end{proof}

\begin{prop}[{c.f. \cite[Proposition 2.6]{chen2017sufficient}}]
\label{eff}
Let $D$ be an effective $\R$-Cartier divisor on $X$ and $g$ be a $D$-Green function of $C^0$-type.
Then the function $e^{-g} \in \widehat{C}^0(X^\an)$ extends to a non-negative continuous function on $X^\an$.
\end{prop}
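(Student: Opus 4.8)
The plan is to reduce the statement to a local computation on a suitable affine cover and then glue. First I would use effectivity to write $D = \sum_{k=1}^r a_k D_k$ with $a_k \in \R_{>0}$ and each $D_k$ an effective Cartier divisor, and then choose a finite affine open cover $\{U_i\}$ of $X$ on which every $D_k$ admits a \emph{regular} local equation $f_{k,i} \in \Ox_X(U_i)$. Then $f_i := \prod_{k=1}^r f_{k,i}^{a_k} \in K(X)_\R^\times$ is a local equation of $D$ on $U_i$, and the key point is that it is a product of genuine regular functions raised to positive powers. Consequently $|f_i| = \prod_k |f_{k,i}|^{a_k}$ extends to a continuous, non-negative function on the whole of $U_i^\an$: each $|f_{k,i}|$ is continuous on $U_i^\an$ by the very definition of the Berkovich topology, and $t \mapsto t^{a_k}$ is continuous on $\R_{\geq 0}$ because $a_k>0$.

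Next, since $g$ is a $D$-Green function of $C^0$-type, the function $g + \log|f_i|$ extends to a continuous function $h_i$ on $U_i^\an$. On the non-empty (hence dense) Zariski open subset of $U_i^\an$ on which $g$ is represented by an honest continuous function and $f_i$ is invertible, one has $e^{-g} = e^{-h_i}\cdot|f_i|$; therefore, as an element of $\widehat{C}^0(X^\an)$, $e^{-g}$ coincides with $\varphi_i := e^{-h_i}|f_i|$, which by the previous paragraph is a non-negative continuous function on \emph{all} of $U_i^\an$ (note $e^{-h_i}$ is continuous and positive, and $|f_i|$ is continuous and non-negative). So $e^{-g}$ extends to a non-negative continuous function on each $U_i^\an$.

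Finally I would glue the $\varphi_i$. On $(U_i \cap U_j)^\an$ both $\varphi_i$ and $\varphi_j$ represent the same element $e^{-g} \in \widehat{C}^0(X^\an)$, hence they agree on some non-empty Zariski open subset, which is dense in $(U_i \cap U_j)^\an$; being continuous, they agree on all of $(U_i \cap U_j)^\an$. Thus the $\varphi_i$ patch together to a single non-negative continuous function $\varphi$ on $X^\an = \bigcup_i U_i^\an$ with $\varphi = e^{-g}$ in $\widehat{C}^0(X^\an)$, which is exactly the claim.

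The only genuinely delicate point is the second step: one must make sense of the identity $e^{-g}=e^{-h_i}|f_i|$ across the locus where $f_i$ vanishes, and the reason it extends continuously there is precisely that $D$ is effective, so $f_i$ has no poles on $U_i$ and $|f_i|$ remains bounded and continuous. For a non-effective $D$ the right-hand side would blow up along the polar locus of $f_i$ and $e^{-g}$ would fail to extend, so effectivity is used in an essential way. Everything else is the routine "local construction plus gluing" pattern, relying only on the definition of a Green function, the definition of the Berkovich topology, and the density of non-empty Zariski opens in $X^\an$ already used in this section.
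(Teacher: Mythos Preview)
Your proof is correct and follows essentially the same route as the paper's: write $e^{-g} = |f_i|\cdot e^{-(g+\log|f_i|)}$ on each chart, observe that effectivity makes $|f_i|$ a genuine non-negative continuous function on $U_i^{\an}$ while the Green-function condition makes the second factor continuous, and then glue. The only difference is that you spell out explicitly why $|f_i|$ is continuous for an $\R$-divisor (by decomposing $D=\sum a_k D_k$ with $a_k>0$ and $D_k$ effective Cartier), whereas the paper compresses this into the single remark ``$|f|\in C^0(U^{\an})$ because $D$ is effective.''
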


\begin{proof}
Let $U$ be a non-empty Zariski open subset of $X$ and $f$ be a local equation of $D$ on $U$.
Since $g+\log|f|$ extends a continuous function on $U^\an$, $e^{-g} = |f| \ndot e^{-(g+\log|f|)}$ extends a non-negative continuous function on $U^\an$.
We remark that $|f| \in C^0(U^\an)$ because $D$ is effective.
By gluing continuous functions, $e^{-g}$ extends a non-negative continuous function on $X^\an$.
\end{proof}

By Proposition \ref{eff}, we sometimes consider a Green function of an effective $\R$-Cartier divisor as a map $X^\an \rightarrow \R \cup \{+\infty\}$.

\subsection{Continuous metrics on an invertible $\Ox_X$-module}
\begin{defi}
Let $\Lb$ be an invertible $\Ox_X$-module.
We say that a family $\varphi = \{\ab_\varphi(x)\}_{x \in X^\an}$ is a \textit{metric on $\Lb$} if $\ab_\varphi(x)$ is a norm on $\Lb(x) := \Lb \otimes_{\Ox_X} \widehat{\kappa}(x)$ for all $x \in X^\an$.
A metric $\varphi = \{\ab_\varphi(x)\}_{x \in X^\an}$ on $\Lb$ is \textit{continuous} if for any Zariski open subset $U$ of $X$ and non-zero section $s \in H^0(U,\Lb)\setminus\{0\}$, $|s|_\varphi(x)$ is a continuous function on $U^\an$.
\end{defi}

\begin{defi}
Let $\Lb$ be an invertible $\Ox_X$-module on $X$ and $\varphi$ be a continuous metric on $\Lb$.
Then we define a norm $\nm_\varphi$ on $H^0(X,\Lb)$ by
\[ \|s\|_\varphi := \sup_{x \in X^\an} |s|_\varphi(x) \quad \text{for} \ s \in H^0(X,\Lb).  \]
\end{defi}

\begin{defi}
Let $\Lb, \Lb'$ be invertible $\Ox_X$-modules.
Let $\varphi = \{\ab_\varphi(x)\}_{x \in X^\an}$ and $\varphi' = \{\ab_{\varphi'}(x)\}_{x \in X^\an}$ be metrics of $\Lb,\Lb'$ respectively.
We define the metric $\varphi + \varphi'$ of $\Lb \otimes \Lb'$ by
\[ |s \otimes s'|_{\varphi+\varphi'}(x) = |s|_\varphi(x) \ndot |s|_{\varphi'}(x) \]
for $x \in X^\an$, $s \in \Lb(x)$ and $s' \in \Lb'(x)$.
We set the \textit{dual metric $-\varphi$ of $\varphi$ on $\Lb^\vee$} as
\[ |\alpha(s)|_x = |\alpha|_{-\varphi}(x) \ndot |s|_\varphi(x) \]
for $x \in X^\an$, $s \in \Lb(x)$ and $\alpha \in \Lb^\vee(x) = (\Lb(x))^\vee$.
If $\varphi$ and $\varphi'$ are continuous, $\varphi + \varphi'$ and $-\varphi$ are also continuous by definition.
\end{defi}

We see the relation between Green functions and continuous metrics.
Let $\Lb$ be an invertible $\Ox_X$-module and $\varphi = \{\ab_\varphi(x)\}_{x \in X^\an}$ be a continuous metric on $\Lb$.
Let $s$ be a non-zero rational section of $\Lb$.
Then $(\mathrm{div}(s), -\log|s|_\varphi)$ is a $\mathrm{div}(s)$-Green function of $C^0$-type.
In fact, let $U$ be a non-empty Zariski open subset of $X$ which trivialize $\Lb$ and $\omega$ be a local basis of $\Lb$ on $U$.
Then $s$ is denoted by $f\omega$ for some $f \in K(X)$.
Since $\mathrm{div}(s)$ is defined by $f$ on $U$, we have
\[ -\log|s|_\varphi + \log|f| = -\log|f| - \log|\omega|_\varphi + \log|f| =  -\log|\omega|_\varphi, \]
which is a continuous function on $U^\an$.

Conversely, let $D$ be a Cartier divisor and $g$ be a $D$-Green function of $C^0$-type.
Then we equip $\Ox_X(D)$ with a continuous metric $\varphi_g = \{\ab_g(x)\}_{x \in X^\an}$ as follows:
Let $U$ be a non-empty Zariski open subset of $X$ and $f$ be a local equation of $D$ on $U$.
Since $1/f$ is a local basis of $\Ox_X(D)$ on $U$, we can denote any section $s \in \Ox_X(D)(U)$ by $a/f$ for some $a \in \Ox_X(U)$.
Then we define $|s|_g(x) := |a|_x \ndot e^{-g(x) - \log|f|_x}$ for $x \in U^\an$.

By Proposition \ref{pg}, a continuous metric $\varphi$ on $\Ox_X$ corresponds to a continuous function $g_\varphi$ on $X^\an$.
So for continuous metrics $\varphi$ and $\psi$ on an invertibel $\Ox_X$-module, we write $\varphi \geq \psi$ if the continuous function $g_{\varphi-\psi}$, which corresponds to $\varphi-\psi$, is non-negative on $X^\an$.

\subsection{Adelic $\R$-Cartier divisors}

Let $\K = \Q,\R$ or a blank symbol.

\begin{defi}
We say that a pair $\overline{D} = (D,g)$ is an \textit{adelic $\K$-Cartier divisor} if $D$ is an $\K$-Cartier divisor and $g$ is a $D$-Green function of $C^0$-type.
We denote by $\widehat{\Div}(X)_\K$ the set of $\K$-Cartier divisors.
Remark that $\widehat{\Div}(X)_{\R} \ncong \widehat{\Div}(X) \otimes_{\Z} \R$.
A non-zero $\K$-rational function $f \in K(X)_\K^\times$ naturally gives an adelic $\K$-Cartier divisor $((f),-\log|f|)$, which is called a \textit{$\K$-principal} and denoted by $\widehat{(f)}$.
We say that two adelic $\R$-Cartier divisors $\overline{D}_1,\overline{D}_2$ are \textit{$\K$-linearly equivalent} if $\overline{D}_1-\overline{D}_2$ is $\K$-principal.
Let $\widehat{\Pic}(X)$ be $\widehat{\Div}(X)$ modulo linearly equivalence and it is called the \textit{arithmetic Picard group}. 
An adelic $\K$-Cartier divisor $(D,g)$ is \textit{effective} if $D$ is effective and $g$ is a non-negative. 
Then we write $(D,g) \geq 0$.
\end{defi}

Let $\overline{D} = (D,g)$ be an adelic $\R$-Cartier divisor on $X$.
Then the set of ``global sections'' $H^0(D)$ is given by
\[ H^0(D)  = \{f \in K(X)^\times \,|\, D + (f) \geq 0 \} \cup \{0\}. \]
Let $s \in H^0(D) \setminus \{0\}$.
By Proposition \ref{eff}, the function $|s| e^{-g} = e^{-g+\log|s|}$ extends to a non-negative function on $X^\an$.
We denote this function by $|s|_g:X^\an \rightarrow \R_+$.
Then we define
\[ \|s\|_g := \sup_{x \in X^\an} |s|_g(x). \]
The map $\nm_g :H^0(D) \rightarrow \R_+$ gives an ultrametric norm on $H^0(D)$ over $K$ and it coincides with the supremum norm induced by the continuous metric on $\Ox_X(D)$ corresponding to $g$.
Moreover, $\bigoplus_{n=0}^\infty (H^0(nD),\nm_{ng})$ is a normed graded ring over $K$ by definition.

We set
\begin{equation*}
\lm(D,g) := \lm(H^0(D),\nm_g),
\end{equation*}
and
\[ \lmasy(D,g) := \limsup_{n \rightarrow +\infty} \frac{1}{n}\lm(nD,ng). \] 
Since $\bigoplus_{n=0}^\infty (H^0(nD),\nm_{ng})$ is a normed graded ring, the sequence $\{\lm(nD,ng)\}_n$ is super-additive, that is,
\[ \lm((m+n)D,(m+n)g) \geq \lm(mD,mg) + \lm(nD,ng) \quad \text{for}\ \forall m,n \in \Z_+. \]
Hence by Fekete's lemma, we have
\[ \lmasy(D,g) = \lim_{n \rightarrow +\infty} \frac{1}{n}\lm(nD,ng) = \sup_{n \geq 1} \frac{1}{n}\lm(nD,ng). \]
Later, we will show that $\lmasy(D,g) < +\infty$.

\begin{defi}
Let $(D,g)$ be an adelic $\R$-Cartier divisor.
We say that a non-zero global section $s \in H^0(D) \setminus \{0\}$ is a \textit{small section} if $\|s\|_g \leq 1$ or equivalently $s \in \Fil^0(H^0(D),\nm_g)$.
Moreover, if $\|s\| < 1$, it is called a \textit{strictly small section}.
\end{defi}

\begin{prop}
Let $\overline{D}=(D,g)$ be an adelic $\R$-Cartier divisor on $X$.
Then we have
\[ \Fil^0(H^0(D),\nm_g) = \left\{ s \in K(X)^\times \, \middle| \, \overline{D} + \widehat{(s)} \geq 0 \right\} \cup \{0\}. \]
\end{prop}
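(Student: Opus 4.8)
The plan is to unwind both sides to conditions attached to the single pair $\bigl(D+(s),\,g-\log|s|\bigr)$ and to observe that they match term by term; the only non-formal ingredient is Proposition \ref{eff}. Since $0$ lies in both sets by convention, it suffices to work with a non-zero $s\in K(X)^\times$, and in fact both sets are contained in $H^0(D)$: the right-hand side because $\overline{D}+\widehat{(s)}\geq 0$ forces $D+(s)\geq 0$, the left-hand side because $\Fil^0(H^0(D),\nm_g)\subseteq H^0(D)$ by definition.

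First I would record that, by the definition of the group law and of the $\K$-principal divisor $\widehat{(s)}=\bigl((s),-\log|s|\bigr)$, one has $\overline{D}+\widehat{(s)}=\bigl(D+(s),\,g-\log|s|\bigr)$, and that by the definition of effectivity for an adelic $\R$-Cartier divisor, $\overline{D}+\widehat{(s)}\geq 0$ means exactly that $D+(s)$ is an effective $\R$-Cartier divisor and that $g-\log|s|$ is non-negative. The condition ``$D+(s)\geq 0$'' is precisely ``$s\in H^0(D)$'' by the description $H^0(D)=\{f\in K(X)^\times\mid D+(f)\geq 0\}\cup\{0\}$. Hence the claim reduces to the following: for a fixed $s\in H^0(D)\setminus\{0\}$, the Green function $g-\log|s|$ is non-negative if and only if $\|s\|_g\leq 1$, i.e.\ $s\in\Fil^0(H^0(D),\nm_g)$.

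To establish this last equivalence, note that by Proposition \ref{pg}(1) the function $-\log|s|$ is a Green function of $(s)$, so by Proposition \ref{pg}(2) the function $g-\log|s|\in\widehat{C}^0(X^\an)$ is a Green function of $D+(s)$, which is effective. Proposition \ref{eff} then applies and tells us that $e^{-(g-\log|s|)}=|s|\,e^{-g}$ extends to a non-negative continuous function on all of $X^\an$; this extension is, by definition, the function $|s|_g\colon X^\an\to\R_+$. Consequently, interpreting (as explained after Proposition \ref{eff}) non-negativity of $g-\log|s|$ as the assertion that the induced map $X^\an\to\R\cup\{+\infty\}$ is everywhere $\geq 0$, this is equivalent to $|s|_g(x)=e^{-(g-\log|s|)(x)}\leq 1$ for every $x\in X^\an$, hence to $\|s\|_g=\sup_{x\in X^\an}|s|_g(x)\leq 1$, which is the defining condition for $s\in\Fil^0(H^0(D),\nm_g)$. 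This proves both inclusions simultaneously.

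The one point demanding care — rather than any real computation — is the interpretation of ``non-negativity'' of a Green function that a priori lives only in $\widehat{C}^0(X^\an)$: it is precisely Proposition \ref{eff}, applicable because $D+(s)$ is effective, that allows one to regard $g-\log|s|$ as an honest $\R\cup\{+\infty\}$-valued function on the compact space $X^\an$ and thereby makes the comparison with the supremum norm legitimate. No other step of the argument presents a genuine obstacle.
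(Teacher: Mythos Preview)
Your argument is correct and follows the same route as the paper: reduce to showing, for $s\in H^0(D)\setminus\{0\}$, that $\|s\|_g\leq 1$ is equivalent to $g-\log|s|\geq 0$ on $X^\an$. The paper dispatches this in two lines via the chain $\|s\|_g\leq 1 \Leftrightarrow e^{-g+\log|s|}\leq 1 \Leftrightarrow g-\log|s|\geq 0$, whereas you spell out more carefully (and rightly) why Proposition~\ref{eff} is what makes this chain meaningful on all of $X^\an$.
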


\begin{proof}
Let $s \in H^0(D) \setminus \{0\}$.
By definition,
\begin{equation*}
\begin{aligned}
\|s\|_g \leq 1 &\Leftrightarrow e^{-g+\log|s|} \leq 1\ \text{on}\ X^\an \\
&\Leftrightarrow g-\log|s| \geq 0 \ \text{on}\ X^\an.
\end{aligned}
\end{equation*}
\end{proof}

Small sections play the similar role as global sections in algebraic geometry.
Therefore we are interested in the asymptotic behavior of $\Fil^0(H^0(D),\nm_{ng})$ as $n \rightarrow +\infty$.

\subsection{Associated $\R$-Weil divisors}

\begin{defi}
Let $(D,g)$ be an adelic $\R$-Cartier divisor on $X$.
For any $x \in X^{(1)}$,
\[ \mu_x(g) := \inf_{\xi \in (\eta^\an,x^\an)} \frac{g(\xi)}{t(\xi)} \in \R \cup \{-\infty\}. \]
Clearly $\mu_x(g) \geq 0$ if and only if $g \geq 0$ on $(\eta^\an,x^\an)$.
Moreover $\mu_x(g) = -\infty$ if and only if $g(\eta^\an) < 0$, which implies that if $\mu_x(g) = -\infty$ for some $x \in X^{(1)}$, then $\mu_x(g) = -\infty$ for every $x \in X^{(1)}$.
\end{defi}

The above invariant $\mu_x(g)$ has following properties:

\begin{prop}[{c.f. \cite[Proposition 5.7]{chen2017sufficient}}]
\label{mu0}
Let $(D,g)$ be an adelic $\R$-Cartier divisor on $X$.
For all but finitely many $x \in X^{(1)}$, we have $\mu_x(g) \leq 0$.
\end{prop}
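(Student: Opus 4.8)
The plan is to exploit the fact that, away from the finitely many prime divisors occurring in $D$, the Green function $g$ is continuous on a Zariski open set whose analytification contains the whole segment $[\eta^\an,x^\an]$; boundedness of $g$ on that segment then forces $g(\xi)/t(\xi)$ to be non-positive in the limit $t(\xi)\to+\infty$.

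First I would record that, viewing $D$ as an $\R$-Weil divisor, only finitely many points of $X^{(1)}$ satisfy $\ord_x(D)\neq 0$; call this finite set $S$. Fix $x\in X^{(1)}\setminus S$ and put $U:=X\setminus\bigcup_{y\in S}[y]$, a non-empty Zariski open subset of $X$ that contains the generic point $x$ of $[x]$ (since $[x]$ is a prime divisor different from each $[y]$, $y\in S$). For every $y\in X^{(1)}\cap U$ we have $\ord_y(D)=0$, so the $\R$-Weil divisor associated to $D|_U$ vanishes; since $U$ is normal, injectivity of the Cartier-to-Weil correspondence gives $D|_U=0$. Hence $1\in K(X)_\R^\times$ is a local equation of $D$ on $U$, and by the definition of a $D$-Green function of $C^0$-type applied to this local equation, $g$ extends to a continuous function on $U^\an$.

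Next I would observe that $[\eta^\an,x^\an]\subseteq U^\an$, because $j(\eta^\an)=\eta\in U$, $j(\xi)=\eta\in U$ for every $\xi\in(\eta^\an,x^\an)$, and $j(x^\an)=x\in U$. Identifying $[\eta^\an,x^\an]$ with $[0,+\infty]$ via $\xi\mapsto t(\xi)$, the restriction of $g$ to $[\eta^\an,x^\an]$ becomes a continuous function $h$ on the compact space $[0,+\infty]$; in particular $h\leq M$ on $(0,+\infty)$ for some $M\in\R$. Therefore
\[
\mu_x(g)=\inf_{\xi\in(\eta^\an,x^\an)}\frac{g(\xi)}{t(\xi)}=\inf_{t\in(0,+\infty)}\frac{h(t)}{t}\ \leq\ \liminf_{t\to+\infty}\frac{h(t)}{t}\ \leq\ \lim_{t\to+\infty}\frac{M}{t}=0 .
\]
Since $S$ is finite, this proves $\mu_x(g)\leq 0$ for all but finitely many $x\in X^{(1)}$.

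I do not expect a genuine obstacle here; the only points requiring a little care are choosing $U$ so that simultaneously $x\in U$ and $D|_U=0$ — which is where normality of $X$ enters, through the injectivity of $\Div(X)\to\WDiv(X)$ — and noting that $x^\an$ itself lies in $U^\an$, so that $g$ is continuous up to the endpoint $x^\an$ and hence bounded above on the non-compact segment $(\eta^\an,x^\an)$; only this upper bound is actually used.
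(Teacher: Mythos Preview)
Your argument is correct and follows essentially the same route as the paper: both restrict $g$ to a continuous function on $U^{\an}$ for a suitable Zariski open $U$, observe that $[\eta^{\an},x^{\an}]\subseteq U^{\an}$ is compact for $x\in U\cap X^{(1)}$, and deduce $\mu_x(g)\leq 0$ from the resulting upper bound on $g$. The only difference is cosmetic: the paper takes any $U$ on which the element $g\in\widehat{C}^0(X^{\an})$ is represented by a continuous function, whereas you explicitly construct $U$ as the complement of the support of $D$ and invoke injectivity of $\Div(U)_\R\to\WDiv(U)_\R$ to see that $1$ is a local equation; both lead to the same finite exceptional set and the same compactness argument.
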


\begin{proof}
Let $U$ be a non-empty Zariski open subset of $X$ such that $g$ is a continuous function on $U^\an$.
Then $g$ is continuous on $[\eta^\an,x^\an]$ for all $x \in U \cap X^{(1)}$.
Since $[\eta^\an,x^\an]$ is compact, $g|_{[\eta^\an,x^\an]}$ is bounded above.
Hence we have $\mu_x(g) \leq 0$ for for all $x \in U \cap X^{(1)}$, which implies the assertion because $X^{(1)} \setminus U$ is a finite set.
\end{proof}

\begin{prop}[{c.f. \cite[Lemma 5.8]{chen2017sufficient}}]
\label{mu1}
Let $(D,g)$ be an adelic $\R$-Cartier divisor on $X$ and $x \in X^{(1)}$.
\begin{enumerate}
\item For any $s \in K(X)_\R^\times$, we have
\[ \mu_x(g - \log |s|) = \mu_x(g) + \mathrm{ord}_x(s). \]
\item We have $\mu_x(g) \leq \mathrm{ord}_x(D)$.
\end{enumerate}
\end{prop}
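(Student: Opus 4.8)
The plan is to handle the two assertions separately: (1) by an essentially formal computation on the segment $(\eta^\an,x^\an)$, and (2) by feeding that computation into a compactness argument on $[\eta^\an,x^\an]$.

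For (1), fix $\xi\in(\eta^\an,x^\an)$ (this set is non-empty, being homeomorphic to $(0,+\infty)$). By the very definition of $(\eta^\an,x^\an)$ the absolute value $\ab_\xi$ satisfies $|h|_\xi=e^{-t(\xi)\ord_x(h)}$ for $h\in K(X)^\times$; extending this multiplicatively over $K(X)_\R^\times=K(X)^\times\otimes_\Z\R$ gives $\log|s|(\xi)=-t(\xi)\ord_x(s)$ for every $s\in K(X)_\R^\times$ (note $\log|s|\in\widehat C^0(X^\an)$ is defined at $\xi$ since $j(\xi)=\eta$ lies in every non-empty Zariski open). Since $-\log|s|$ is a $(s)$-Green function of $C^0$-type by Proposition \ref{pg}(1), $g-\log|s|$ is a Green function of $D+(s)$, and on $(\eta^\an,x^\an)$ one has $(g-\log|s|)(\xi)=g(\xi)+t(\xi)\ord_x(s)$, hence
\[ \frac{(g-\log|s|)(\xi)}{t(\xi)}=\frac{g(\xi)}{t(\xi)}+\ord_x(s). \]
Taking the infimum over $\xi\in(\eta^\an,x^\an)$ and using that $\ord_x(s)$ is a finite constant independent of $\xi$ gives $\mu_x(g-\log|s|)=\mu_x(g)+\ord_x(s)$, with both sides $-\infty$ in the degenerate case $\mu_x(g)=-\infty$.

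For (2), choose a non-empty Zariski open $U\subseteq X$ with $x\in U$ and a local equation $f\in K(X)_\R^\times$ of $D$ on $U$, so that $\ord_x(D)=\ord_x(f)$. Because $g$ is a $D$-Green function of $C^0$-type, $g+\log|f|$ extends to a continuous function $h$ on $U^\an$. Since both $\eta$ and $x$ lie in $U$, the whole segment $[\eta^\an,x^\an]$ is contained in $U^\an$, and it is compact, being homeomorphic to $[0,+\infty]$ via $\xi\mapsto t(\xi)$ with $x^\an\mapsto+\infty$. Using the computation of $\log|f|(\xi)$ from part (1), for $\xi\in(\eta^\an,x^\an)$ we get
\[ \frac{g(\xi)}{t(\xi)}=\frac{h(\xi)}{t(\xi)}+\ord_x(f)=\frac{h(\xi)}{t(\xi)}+\ord_x(D). \]
Picking any sequence $\xi_n\in(\eta^\an,x^\an)$ with $t(\xi_n)\to+\infty$ we have $\xi_n\to x^\an$ in $[\eta^\an,x^\an]$, so $h(\xi_n)\to h(x^\an)\in\R$ by continuity, whence $h(\xi_n)/t(\xi_n)\to0$ and $g(\xi_n)/t(\xi_n)\to\ord_x(D)$. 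Therefore $\mu_x(g)=\inf_\xi g(\xi)/t(\xi)\le\lim_n g(\xi_n)/t(\xi_n)=\ord_x(D)$. (Alternatively, (2) follows from (1) applied to $s=f^{-1}$: this gives $\mu_x(g+\log|f|)=\mu_x(g)-\ord_x(D)$, while $g+\log|f|$ restricts to a continuous function on the compact segment $[\eta^\an,x^\an]$, forcing $\mu_x(g+\log|f|)\le0$ by looking at $t(\xi)\to+\infty$.)

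The computations here are routine; the only points needing care are the identity $\log|s|(\xi)=-t(\xi)\ord_x(s)$ on $(\eta^\an,x^\an)$ together with its $\R$-linear extension, the inclusion $[\eta^\an,x^\an]\subseteq U^\an$ (which is exactly why $U$ must be chosen to contain $x$), and the continuity/compactness input making $h(\xi_n)/t(\xi_n)\to0$. I expect this last compactness step in (2) to be the only genuinely substantive ingredient, everything else being bookkeeping with the definitions.
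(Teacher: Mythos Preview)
Your proof is correct and matches the paper's argument. Part (1) is identical; for part (2) the paper uses precisely your alternative route---apply (1) with $s=f^{-1}$ to reduce to $\mu_x(g+\log|f|)\le 0$, which follows from boundedness of $g+\log|f|$ on the compact segment $[\eta^\an,x^\an]$---while your primary sequence argument is just an explicit unpacking of that same boundedness step.
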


\begin{proof}
(1) By definition of $X_\dv^\an$, for any $s \in K(X)_\R^\times$, we have
\[ -\log|s|(\xi) = t(\xi)\ord_x(s),  \quad \xi \in (\eta^\an,x^\an). \]
Hence we obtain that
\[ \mu_x(g-\log|s|) = \inf_{\xi \in (\eta^\an,x^\an)} \frac{g(\xi)-\log|s|(\xi)}{t(\xi)} = \mu_x(g) + \ord_x(s). \]

(2) Let $f \in K(X)_\R^\times$ be a local equation of $D$ around $x$.
Then $g + \log|f|$ extends to a continuous function on $[\eta^\an,x^\an]$.
Since $(g + \log|f|)|_{[\eta^\an,x^\an]}$ is bounded above, we have $\mu_x(g + \log|f|) \leq 0$.
By (1), we get $\mu_x(g) \leq \ord_x(D)$.
\end{proof}

Now we introduce an important divisor.

\begin{defi}
Let $(D,g)$ be an adelic $\R$-Cartier divisor on $X$.
We say that $(D,g)$ is \textit{$\mu$-finite} if $\mu_x(g) = 0$ for all but finitely many $x \in X^{(1)}$, which is equivalent to $\mu_x(g) \geq 0$ for all but finitely many $x \in X^{(1)}$ by Proposition \ref{mu0}.
If $(D,g)$ is $\mu$-finite, we can define an $\R$-Weil divisor on $X$ as follows:
\[ D_{\mu(g)} := \sum_{x \in X^{(1)}} \mu_x(g) [x]. \]
It is called an \textit{$\R$-Weil divisor associated with $(D,g)$}.
Remark that $D_{\mu(g)}$ may not be an $\R$-Cartier divisor.
\end{defi}

For example, if $(D,g)$ has a Dirichlet property (which means that $(D,g)$ is $\R$-linearly equivalent to an effective adelic $\R$-Cartier divisor), then $(D,g)$ is $\mu$-finite.

By Proposition \ref{mu1}, we have $D_{\mu(g)} \leq D$ and 
\begin{equation}
\label{mulin}
{(D + (s))}_{\mu(g - \log |s|)} = D_{\mu(g)} + (s).
\end{equation}

\begin{prop}
Let $(D,g)$ be a $\mu$-finite adelic $\R$-Cartier divisor on $X$.
Then $(D,g)$ is effective if and only if $D_{\mu(g)}$ is effective.
\end{prop}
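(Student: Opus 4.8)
The plan is to prove the two implications separately; the forward implication is essentially immediate, and the real content is the converse. For "$(D,g)\ge 0\Rightarrow D_{\mu(g)}\ge 0$": if $(D,g)$ is effective then $g\ge 0$, in particular on $(\eta^\an,x^\an)$ for every $x\in X^{(1)}$, so by the remark following the definition of $\mu_x(g)$ we get $\mu_x(g)\ge 0$ for all $x$; thus every coefficient of $D_{\mu(g)}$ is non-negative and $D_{\mu(g)}$ is effective as an $\R$-Weil divisor.

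For the converse, assume $\mu_x(g)\ge 0$ for all $x\in X^{(1)}$; I would need to show that $D$ is effective and that $g\ge 0$ on $X^\an$. The effectivity of $D$ I would deduce from Proposition \ref{mu1}(2): it gives $\ord_x(D)\ge\mu_x(g)\ge 0$ for every $x\in X^{(1)}$, and since $X$ is normal this forces $D\ge 0$. For the Green function, first observe that $\mu_x(g)\ge 0$ excludes $\mu_x(g)=-\infty$, hence $g(\eta^\an)\ge 0$, and also $g\ge 0$ on each $(\eta^\an,x^\an)$; so $g$ is non-negative on $S:=\{\eta^\an\}\cup\bigcup_{x\in X^{(1)}}(\eta^\an,x^\an)$. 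Then I would pick a non-empty Zariski open $U\subseteq X$ on which $g$ is represented by a genuine continuous function on $U^\an$. Since the generic point of $X$ lies in $U$, we have $S\subseteq U^\an$; moreover the closure of $S$ contains every endpoint $x^\an$ (a limit of points of $(\eta^\an,x^\an)$), hence contains $X_\dv^\an$, which is dense in $X^\an$ by Lemma \ref{den}. Therefore $S$ is dense in $U^\an$, and continuity of $g$ there yields $g\ge 0$ on all of $U^\an$.

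It then remains to pass from $U^\an$ to $X^\an$, and for this I would use that $D$ has just been shown to be effective: by Proposition \ref{eff} the function $e^{-g}$ extends to a non-negative continuous function on $X^\an$, and on the dense open subset $U^\an$ it satisfies $e^{-g}\le 1$ since $g\ge 0$ there, so by continuity $e^{-g}\le 1$ on $X^\an$, i.e.\ $g\ge 0$ on $X^\an$ in the extended sense of Proposition \ref{eff}; hence $(D,g)\ge 0$. The step I expect to be the main obstacle is precisely this last one: $\mu_x(g)$ only records the behaviour of $g$ on the divisorial skeleton $X_\dv^\an$, so controlling $g$ at the non-divisorial points of $X^\an$ requires combining the density of $X_\dv^\an$ with the continuity of $e^{-g}$ furnished by the effectivity of $D$. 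A comparatively minor point is the assertion, used above, that on a normal variety an $\R$-Cartier divisor with non-negative Weil coefficients is effective.
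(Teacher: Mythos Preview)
Your proof is correct and follows the same route as the paper's: the forward direction is immediate, and for the converse both you and the paper use $\ord_x(D)\ge\mu_x(g)\ge 0$ from Proposition \ref{mu1} together with the density of $X_\dv^\an$ from Lemma \ref{den}. If anything your treatment of the last step is more careful than the paper's, which passes directly from ``$g\ge 0$ on $X_\dv^\an$'' to ``$g\ge 0$ on $X^\an$'' in one line; your ordering (establish $D\ge 0$ first, then invoke Proposition \ref{eff} so that $e^{-g}$ is continuous on all of $X^\an$, then use density) makes that passage rigorous.
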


\begin{proof}
We first assume that $(D,g)$ is effective.
Then $g$ is non-negative on $X^{\an}$, so $\mu_x(g) \geq 0$ for any $x \in X^{(1)}$,
which implies $D_{\mu(g)}$ is effective.

Conversely we assume that $D_{\mu(g)}$ is effective.
Then $g$ is non-negative on  $X_{\mathrm{div}}^{\an}$, but $X_{\mathrm{div}}^{\an}$ is dense in $X^{\an}$ by Lemma \ref{den}, so it follows that $g$ is non-negative on the whole $X^{\an}$.
Moreover by Proposition \ref{mu1}, we have
\[ \mathrm{ord}_x(D) \geq \mu_x(g) \geq 0 \]
for any $x \in X^{(1)}$, which completes the proof.
\end{proof} 

By the above proposition and the equation (\ref{mulin}), we have the following corollary:

\begin{cor}
\label{as1}
\[ H^0(D_{\mu(g)}) = \mathcal{F}^0(H^0(D),\nm_g) = \{ s \in H^0(D) \,|\, \|s\|_g \leq 1 \}. \]
\end{cor}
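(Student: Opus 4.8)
The plan is to reduce everything to the preceding Proposition (effectivity of a $\mu$-finite adelic $\R$-Cartier divisor is equivalent to effectivity of its associated $\R$-Weil divisor) and to the earlier characterization $\Fil^0(H^0(D),\nm_g) = \{ s \in K(X)^\times \mid \overline{D} + \widehat{(s)} \geq 0 \} \cup \{0\}$, by twisting $\overline{D}$ by $\widehat{(s)}$. Note first that the second equality in the statement is merely the definition of the filtration: $\Fil^0(H^0(D),\nm_g) = \{ s \in H^0(D) \mid \|s\|_g \leq e^{-0}\} = \{ s \in H^0(D) \mid \|s\|_g \leq 1 \}$. So the real content is the first equality $H^0(D_{\mu(g)}) = \Fil^0(H^0(D),\nm_g)$.

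I would begin with two preliminary observations. First, $\mu$-finiteness is stable under twisting by a principal divisor: since $\mu_x(g) \geq 0$ for all but finitely many $x \in X^{(1)}$ and, for any $s \in K(X)^\times$, $\ord_x(s) = 0$ for all but finitely many $x$, Proposition \ref{mu1}(1) gives $\mu_x(g - \log|s|) = \mu_x(g) + \ord_x(s) = 0$ for all but finitely many $x$; hence $\overline{D} + \widehat{(s)} = (D + (s),\, g - \log|s|)$ is again a $\mu$-finite adelic $\R$-Cartier divisor, and by (\ref{mulin}) its associated $\R$-Weil divisor is $D_{\mu(g)} + (s)$. Second, by Proposition \ref{mu1}(2) we have $D_{\mu(g)} \leq D$, so $H^0(D_{\mu(g)}) \subseteq H^0(D)$; in particular for $s \in H^0(D_{\mu(g)}) \setminus \{0\}$ the quantity $\|s\|_g$ is defined and the membership $s \in \Fil^0(H^0(D),\nm_g)$ makes sense.

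With this in place, the first equality follows from a chain of equivalences. The case $s = 0$ is trivial since $0$ lies in both sides, so let $s \in K(X)^\times$; then
\begin{align*}
s \in H^0(D_{\mu(g)})
&\iff D_{\mu(g)} + (s) \geq 0 \\
&\iff (D + (s))_{\mu(g - \log|s|)} \geq 0 \\
&\iff \overline{D} + \widehat{(s)} \geq 0 \\
&\iff s \in \Fil^0(H^0(D),\nm_g),
\end{align*}
where the first step is the definition of $H^0$ of an $\R$-Weil divisor, the second is (\ref{mulin}), the third is the preceding Proposition applied to the $\mu$-finite divisor $\overline{D} + \widehat{(s)}$, and the last is the earlier characterization of $\Fil^0$ (unwinding, if one prefers, to $D + (s) \geq 0$ together with $g - \log|s| \geq 0$ on $X^\an$, i.e.\ $\sup_{x \in X^\an} |s|_g(x) \leq 1$).

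I do not anticipate a genuine obstacle: all the substance sits in results already established (Proposition \ref{mu1}, the $\mu$-finite effectivity criterion, the $\Fil^0$ characterization, and the density of $X_\dv^\an$ that is used inside the former). The only care needed is the bookkeeping — that $\mu$-finiteness survives twisting by $\widehat{(s)}$ and that $H^0(D_{\mu(g)})$ is honestly contained in $H^0(D)$ — which is exactly what the two preliminary observations handle.
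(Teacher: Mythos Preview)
Your proof is correct and follows essentially the same approach the paper indicates: the paper's one-line justification ``By the above proposition and the equation (\ref{mulin})'' is precisely your chain of equivalences, and your two preliminary observations (stability of $\mu$-finiteness under twisting by $\widehat{(s)}$, and $H^0(D_{\mu(g)}) \subseteq H^0(D)$) make explicit the bookkeeping the paper leaves implicit.
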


\subsection{Canonical Green function}
For any $\R$-Cartier divisor $D$, we can naturally give a $D$-Green function of $C^0$-type as follows:
For any $x \in X^\an$, let $f \in K(X)_\R^\times$ be a local equation of $D$ around $\red(x) \in X$.
Then we define
\[ g_D^\mathrm{c}(x) := -\log|f|_x. \]
This definition is independent of the choice of a local equation.
In fact, let $f' \in K(X)_\R^\times$ be another local equation.
Then there is an element $a \in (\Ox_{X,\red(x)})_\R^\times$ such that $f' = af$.
Since $|a|_x = 1$ by (\ref{red}), we have $-\log|f'|_x = -\log|f|_x$.

\begin{prop}
\label{cg}
The function $g_D^\mathrm{c}$ is a $D$-Green function of $C^0$-type.
\end{prop}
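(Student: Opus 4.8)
The plan is to verify the defining condition of a $D$-Green function of $C^0$-type directly: for every non-empty Zariski open $U$ and every local equation $f \in K(X)_\R^\times$ of $D$ on $U$, the function $g_D^{\mathrm c} + \log|f|$ extends to a continuous function on $U^\an$. The key observation is that $g_D^{\mathrm c}$ is built from local equations taken at $\red(x)$, while the Green-function condition is phrased in terms of a fixed local equation on $U$; so the heart of the matter is comparing these two and using anti-continuity of the reduction map to get local constancy of the comparison factor.

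First I would reduce to a local statement. Fix $x_0 \in U^\an$ and set $p_0 = \red(x_0)$. Since $\red$ is anti-continuous, $\red^{-1}(\{p\}\text{-neighbourhoods})$ behave well; more concretely, pick an affine open $V \ni p_0$ on which $D$ has a local equation $h \in K(X)_\R^\times$, so that $g_D^{\mathrm c}(x) = -\log|h|_x$ for all $x$ with $\red(x) \in V$. By anti-continuity of $\red$, the set $W := \red^{-1}(V)$ is closed, but what I actually need is that $\red^{-1}(V)$ contains a neighbourhood of $x_0$ in $X^\an$: this follows because the complement $X \setminus V$ is closed in $X$, hence $\red^{-1}(X\setminus V)$ is open, wait — anti-continuity says preimages of opens are closed, so $\red^{-1}(V)$ is closed and $\red^{-1}(X\setminus V)$ is open; since $x_0 \notin \red^{-1}(X\setminus V)$, indeed $x_0$ has a neighbourhood $\Omega \subseteq \red^{-1}(V)$, wait that's backwards. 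Let me restate: $\red^{-1}(X \setminus V)$ is open and does not contain $x_0$, hence $\Omega := X^\an \setminus \red^{-1}(X\setminus V) = \red^{-1}(V)$ is a closed set containing $x_0$ — that alone is not a neighbourhood. The correct route is instead: take $V$ to be an affine neighbourhood of $p_0$; the complement is a closed subset not containing $p_0$; actually the clean statement is that for the reduction map the sets $\red^{-1}(V)$ with $V$ open \emph{affine} are closed, and one shows $g_D^{\mathrm c}$ is continuous by working on each such closed piece and invoking a gluing/local-finiteness argument. So in the write-up I would fix once and for all a finite affine open cover $\{V_i\}$ of $X$ with local equations $h_i$ of $D$ on $V_i$; then on $\red^{-1}(V_i)$ one has $g_D^{\mathrm c} = -\log|h_i|_{(\cdot)}$, and on $\red^{-1}(V_i) \cap U^\an$ the function $g_D^{\mathrm c} + \log|f| = \log|f/h_i|_{(\cdot)} = -\log|h_i/f|_{(\cdot)}$; here $h_i/f \in (\Ox_X(U \cap V_i) \otimes_\Z \R)^\times$ is a unit (both are local equations of $D$ on the overlap), so $|h_i/f|$ is continuous and non-vanishing on $(U \cap V_i)^\an$, and hence $g_D^{\mathrm c} + \log|f|$ is continuous on $\red^{-1}(V_i) \cap (U\cap V_i)^\an$.

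Next I would assemble these pieces. The sets $\red^{-1}(V_i)$ are closed and cover $X^\an$ (since the $V_i$ cover $X$), and the cover is finite, so continuity of a function on $X^\an$ (or on $U^\an$) can be checked on each $\red^{-1}(V_i)$ — a function continuous on each member of a finite closed cover is continuous. The only subtlety is that on $\red^{-1}(V_i)$ the natural formula involves $(U \cap V_i)^\an$ rather than all of $U^\an$; but every point $x$ of $\red^{-1}(V_i)$ with $\red(x) \in V_i$ in fact satisfies $j(x) \in V_i$ is false in general, however $x$ with $\red(x)\in V_i$ does lie over a point whose closure meets $V_i$, and since $h_i$ is regular and invertible as a local equation of $D$ near $\red(x)$, the quantity $|h_i/f|_x$ still makes sense and is continuous by the definition of the Berkovich topology together with $f/h_i$ being a unit in the local ring $\Ox_{X,\red(x)}$. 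So the cleanest formulation: on $\red^{-1}(V_i) \cap U^\an$, $f/h_i$ is a section of $\Ox_X^\times$ near each reduction point, whence $|f/h_i|$ extends continuously, giving the result. Then gluing over the finite cover $\{\red^{-1}(V_i)\}$ completes the proof.

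The main obstacle I anticipate is the bookkeeping around the reduction map: making precise that on $\red^{-1}(V_i)$ one may legitimately use $h_i$ as ``the'' local equation defining $g_D^{\mathrm c}$, and that the ratio $f/h_i$ genuinely lies in $\Ox_{X,\red(x)}^\times$ for every $x$ in that set so that \eqref{red} applies and the ratio is a continuous unit on the relevant analytic domain. Everything else — the finite closed cover argument, the independence of $g_D^{\mathrm c}$ from the choice of local equation (already checked in the paragraph preceding the proposition), and the reduction to the two-divisor/linear algebra manipulations — is routine. I would therefore spend most of the written proof carefully setting up the affine cover and the identity $g_D^{\mathrm c} + \log|f| = -\log|h_i/f|$ on $\red^{-1}(V_i)\cap U^\an$, and then conclude by continuity on a finite closed cover.
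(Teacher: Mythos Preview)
Your strategy is correct and is in substance the paper's own: both arguments show that $g_D^{\mathrm c}+\log|f|$ is continuous on $U^\an$ by comparing the fixed local equation $f$ with a local equation chosen near $\red(x)$ and noting that their ratio is a unit there, so that (\ref{red}) controls its absolute value. The paper organises this pointwise, splitting into the case $\red(x)\in U$ (where the function is identically $0$) and the case $\red(x)\notin U$ (where it introduces a second chart $U'\ni\red(x)$ and writes the function as $-\log|u|$ for a unit $u$ on $U\cap U'$); you instead fix a finite affine cover $\{V_i\}$ at the outset and check continuity on each closed piece $\red^{-1}(V_i)\cap U^\an$, then invoke the pasting lemma for a finite closed cover. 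The obstacle you anticipate is not one: $\red(x)\in V_i$ \emph{does} force $j(x)\in V_i$, because $\red(x)\in\overline{\{j(x)\}}$ and $V_i$ is open (if $j(x)\notin V_i$ then $\overline{\{j(x)\}}\subseteq X\setminus V_i$, excluding $\red(x)$). Hence $\red^{-1}(V_i)\subseteq V_i^\an$, so $|h_i|$ is defined and continuous on all of $\red^{-1}(V_i)$, and $|h_i/f|$ is defined and continuous on $\red^{-1}(V_i)\cap U^\an\subseteq (U\cap V_i)^\an$; your finite-closed-cover argument then goes through without the extra bookkeeping you worried about.
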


\begin{proof}
It is enough to show that for any non-empty Zariski open subset $U$ of $X$ and local equation $f$ of $D$ on $U$, $g_D^\mathrm{c} + \log|f|$ extends to a continuous function on $U^\an$.
Let $x \in U^\an$.
If $\red(x) \in U$, then $g_D^\mathrm{c}(x) = -\log|f|_x$.
Hence we have $g_D^\mathrm{c}(x) + \log|f|_x = 0$.
Next, we assume that $\red(x) \notin U$.
Let $U'$ be a non-empty Zariski open neighborhood of $\red(x)$ and $f'$ be a local equation of $D$ on $U'$.
Then we have $g_D^\mathrm{c}(x) = -\log|f'|_x$
We remark that $j(x) \in U'$, hence $U \cap U' \neq \emptyset$.
There is a non-zero regular function $u \in (\Ox_X(U \cap U'))_\R^\times$ such that $f' = uf$ on $U \cap U'$.
Therefore we obtain that $g_D^\mathrm{c}(x) + \log|f|_x = -\log|u|_x$, which is continuous on $U^\an \cap {U'}^\an$.
Finally, let $y \in U^\an \cap {U'}^\an$ such that $\red(y) \in U$.
Since $u \in (\Ox_{X,\red(y)})_\R^\times$, we have $|u|_y = 1$ by (\ref{red}).
Hence $g_D^\mathrm{c}(y) + \log|f|_y = - \log|u|_y = 0$, which completes the proof.
\end{proof}

\begin{rem}
Proposition \ref{cg} gives the another proof of Proposition \ref{eg}.
\end{rem}

\begin{defi}
The function $g_D^\mathrm{c}$ is called the \textit{canonical Green function of $D$}.
\end{defi}

\begin{prop}
\label{cgp}
\begin{enumerate}
\item For any $s \in K(X)_\R^\times$, $g_{(s)}^\mathrm{c} = -\log|s|$.
\item For any $D,D' \in \Div(X)_\R$ and $a,a' \in \R$, $g_{aD+a'D'}^\mathrm{c} = ag_D^\mathrm{c} + a'g_{D'}^\mathrm{c}$.
\end{enumerate}
\end{prop}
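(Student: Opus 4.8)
The plan is to argue directly from the definition of the canonical Green function. Both assertions are formal consequences of the defining formula $g_D^{\mathrm c}(x) = -\log|f|_x$, valid for \emph{any} local equation $f$ of $D$ around $\red(x)$, combined with the multiplicativity of the absolute value $\ab_x$ on $\kappa(x)$ (extended to $\R$-powers of rational functions).

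For (1) I would note that a non-zero $\R$-rational function $s \in K(X)_\R^\times$ is a local equation of the principal divisor $(s)$ on \emph{every} non-empty Zariski open subset of $X$; in particular it is a local equation around $\red(x)$ for each $x \in X^{\an}$. Hence, by the very definition of $g_{(s)}^{\mathrm c}$, we get $g_{(s)}^{\mathrm c}(x) = -\log|s|_x$ for all $x \in X^{\an}$, which is precisely the asserted identity in $\widehat{C}^0(X^{\an})$.

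For (2) I would fix $x \in X^{\an}$ and pick, using the open coverings that define $D$ and $D'$, a non-empty Zariski open neighborhood $U$ of $\red(x)$ on which $D$ has a local equation $f \in K(X)_\R^\times$ and $D'$ has a local equation $f' \in K(X)_\R^\times$ (take the intersection of a member of each covering containing $\red(x)$). Then $f^a (f')^{a'} \in K(X)_\R^\times$ is a local equation of $aD + a'D'$ on $U$, so by definition of the canonical Green function and multiplicativity of $\ab_x$,
\[ g_{aD+a'D'}^{\mathrm c}(x) = -\log|f^a(f')^{a'}|_x = -a\log|f|_x - a'\log|f'|_x = a\,g_D^{\mathrm c}(x) + a'\,g_{D'}^{\mathrm c}(x). \]
As $x \in X^{\an}$ was arbitrary, this gives the identity of Green functions.

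The only point needing a word of care — and the closest thing here to an obstacle — is the well-definedness of $|h|_x$ for an $\R$-rational function $h = \prod_i h_i^{c_i}$ with $h_i \in K(X)^\times$ and $c_i \in \R$: one checks that $h \mapsto \log|h|_x$ is additive, hence factors through $K(X)^\times \otimes_\Z \R = K(X)_\R^\times$, so the value is independent of the chosen representation of $h$. This is already implicit in the definition of $g_D^{\mathrm c}$ for $\R$-Cartier divisors, and once it is granted both parts are immediate; note that part (2) also recovers Proposition \ref{cg} in conjunction with Proposition \ref{pg}.
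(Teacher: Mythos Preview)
Your proof is correct and follows essentially the same approach as the paper's own proof: both argue directly from the defining formula $g_D^{\mathrm c}(x)=-\log|f|_x$, using in (1) that $s$ is a global equation of $(s)$ and in (2) that $f^a(f')^{a'}$ is a local equation of $aD+a'D'$ around $\red(x)$. Your added paragraph on the well-definedness of $|h|_x$ for $h\in K(X)_\R^\times$ is a welcome clarification but is already implicit in the construction; the closing remark that (2) together with Proposition~\ref{pg} recovers Proposition~\ref{cg} is not quite right (the continuity assertion of Proposition~\ref{cg} is logically prior to the present statement), so you should drop that sentence.
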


\begin{proof}
(1) Since $(s)$ is globally defined by $s$, it follows by definition of the canonical Green function.

(2) Let $x \in X^\an$ and $f,f'$ be local equations of $D,D'$ around $\red(x)$ respectively.
Then $f^a{f'}^{a'}$ is a local equation of $aD+a'D'$ around $\red(x)$.
Hence we have
\[ g_{aD+a'D'}^\mathrm{c}(x) = -\log|f^a{f'}^{a'}|_x = -a\log|f|_x-a'\log|f'|_x = ag_D^\mathrm{c}(x)+a'g_{D'}^\mathrm{c}(x). \]
\end{proof}

Using the canonical Green function, we can define the following injective homomorphism:
\[ \phi:\Div(X) \rightarrow \widehat{\Div}(X), \quad D \mapsto (D,g_D^\mathrm{c}). \]
By Proposition \ref{cgp}, it induces an injective homomorphism $\overline{\phi}:\Pic(X) \rightarrow \widehat{\Pic}(X)$ such that the following diagram is commutative:
\[
\begin{tikzcd}
\Div(X) \arrow{r}{\phi} \arrow{d} & \widehat{\Div}(X) \arrow{d} \\
\Pic(X) \arrow{r}{\overline{\phi}} & \widehat{\Pic}(X).
\end{tikzcd}
\]

\subsection{Height function}
Here we see the hight function on $X^\an$ associated with an adelic $\R$-Cartier divisor, which is introduced by Chen and Moriwaki \cite{chen2017sufficient}.

\begin{defi}
Let $(D,g)$ be an adelic $\R$-Cartier divisor on $X$.
We set $h_{(D,g)}^\an := g - g_D^\mathrm{c}$, which is called \textit{hight function on $X^\an$ associated with $(D,g)$}.
\end{defi}

\begin{prop}[{c.f. \cite[Proposition 4.3]{chen2017sufficient}}]
\label{hp}
Let $\overline{D},\overline{D}'$ be adelic $\R$-Cartier divisors on $X$.
\begin{enumerate}
\item For any $s \in K(X)_\R^\times$, $h_{\widehat{(s)}}^\an = 0$ on $X^\an$.
\item For any $a,a' \in \R$, $h_{a\overline{D}+a'\overline{D}'}^\an = ah_{\overline{D}}^\an+a'h_{\overline{D}'}^\an$ on $X^\an$.
\end{enumerate}
\end{prop}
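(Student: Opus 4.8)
The plan is to unwind the definition $h^\an_{(D,g)} := g - g^{\mathrm c}_D$ and to reduce the statement to the elementary properties of the canonical Green function recorded in Proposition \ref{cgp} (together with the additivity of Green functions from Proposition \ref{pg}(2)). Both assertions will then be purely formal.

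For (1), I would recall that the $\R$-principal adelic $\R$-Cartier divisor attached to $s \in K(X)_\R^\times$ is by definition $\widehat{(s)} = ((s),-\log|s|)$, so that $h^\an_{\widehat{(s)}} = -\log|s| - g^{\mathrm c}_{(s)}$. Since $g^{\mathrm c}_{(s)} = -\log|s|$ by Proposition \ref{cgp}(1), the two terms cancel and $h^\an_{\widehat{(s)}} = 0$ on $X^\an$. For (2), write $\overline{D}=(D,g)$ and $\overline{D}'=(D',g')$, so that $a\overline{D}+a'\overline{D}' = (aD+a'D',\, ag+a'g')$, where $ag+a'g'$ is indeed a Green function of $aD+a'D'$ by Proposition \ref{pg}(2). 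Then
\[ h^\an_{a\overline{D}+a'\overline{D}'} = (ag+a'g') - g^{\mathrm c}_{aD+a'D'} = (ag+a'g') - (a g^{\mathrm c}_D + a' g^{\mathrm c}_{D'}) = a\bigl(g - g^{\mathrm c}_D\bigr) + a'\bigl(g' - g^{\mathrm c}_{D'}\bigr), \]
using Proposition \ref{cgp}(2) in the middle equality, and the right-hand side is exactly $a h^\an_{\overline{D}} + a' h^\an_{\overline{D}'}$.

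There is essentially no genuine obstacle here: the proposition is a formal consequence of the $\R$-linearity of the assignment $D \mapsto g^{\mathrm c}_D$ established in Proposition \ref{cgp}. The only points that deserve a line of care are the sign convention in the definition of $\widehat{(s)}$ and the compatibility of the additive structure on $\widehat{\Div}(X)_\R$ with that on Green functions, both of which are already in place from the earlier subsections.
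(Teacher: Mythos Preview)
Your proof is correct and is exactly the approach the paper takes: the paper's own proof is the single sentence ``It immediately follow[s] from Proposition \ref{cgp},'' and you have simply spelled out that deduction in detail.
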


\begin{proof}
It immediately follow from Proposition \ref{cgp}.
\end{proof}

For any adelic $\R$-Cartier divisor $(D,g)$ on $X$, $h_{(D,g)}^\an$ is a continuous function on $X^\an$.
Hence we have the following homomorphism:
\[ \psi:\widehat{\Div}(X) \rightarrow C^0(X^\an), \quad (D,g) \mapsto h_{(D,g)}^\an. \]
This homomorphism is surjective.
By Proposition \ref{hp}, it induces a surjective homomorphism $\overline{\psi}:\widehat{\Pic}(X) \rightarrow C^0(X^\an)$ such that the following diagram is commutative:
\[
\begin{tikzcd}
\widehat{\Div}(X) \arrow{r}{\psi} \arrow{d} & C^0(X^\an) \\
\widehat{\Pic}(X) \arrow{ru}{\overline{\psi}}
\end{tikzcd}
\]

\begin{thm}
The following sequence is exact:
\[
\begin{tikzcd}
0 \arrow{r} & \Pic(X) \arrow{r}{\overline{\phi}} & \widehat{\Pic}(X) \arrow{r}{\overline{\psi}} & C^0(X^\an) \arrow{r} & 0.
\end{tikzcd}
\]
In particular, $\widehat{\Pic}(X) \simeq \Pic(X) \oplus C^0(X^\an)$.
\end{thm}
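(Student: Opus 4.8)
The plan is to obtain the stated exact sequence by descending, along the quotients by (arithmetic) principal divisors, a split exact sequence that is already visible at the level of divisors:
\[
0 \longrightarrow \Div(X) \xrightarrow{\phi} \widehat{\Div}(X) \xrightarrow{\psi} C^0(X^\an) \longrightarrow 0.
\]
Exactness here is quick: injectivity of $\phi$ and surjectivity of $\psi$ were already recorded, and $\ker\psi=\operatorname{im}\phi$ because $\psi(\phi(D))=h_{(D,g_D^\mathrm{c})}^\an=g_D^\mathrm{c}-g_D^\mathrm{c}=0$ for $D\in\Div(X)$, while $\psi(D,g)=g-g_D^\mathrm{c}=0$ forces $g=g_D^\mathrm{c}$, i.e. $(D,g)=\phi(D)$. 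Moreover it splits: the assignment $s\colon C^0(X^\an)\to\widehat{\Div}(X)$, $h\mapsto(0,h)$, is legitimate since every continuous function on $X^\an$ is a Green function of the zero divisor (Proposition \ref{pg}(3)), it is a group homomorphism, and $\psi(s(h))=h-g_0^\mathrm{c}=h$ because $g_0^\mathrm{c}=0$ (apply Proposition \ref{cgp}(2) with coefficient $0$, or note that a unit is a local equation of the zero divisor).

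Next I would identify how principal divisors sit inside this picture. Let $P\subseteq\Div(X)$ be the subgroup of principal divisors and $\widehat P\subseteq\widehat{\Div}(X)$ that of arithmetic principal divisors $\widehat{(f)}=((f),-\log|f|)$. By Proposition \ref{cgp}(1) we have $g_{(f)}^\mathrm{c}=-\log|f|$, hence $\phi((f))=((f),-\log|f|)=\widehat{(f)}$, so $\phi$ restricts to an isomorphism $P\xrightarrow{\sim}\widehat P$, whereas $\psi(\widehat P)=0$ --- precisely the computation already used to make $\overline\psi$ well defined. Therefore $\phi$ and $\psi$ descend to $\Pic(X)=\Div(X)/P$ and $\widehat{\Pic}(X)=\widehat{\Div}(X)/\widehat P$, and exactness of
\[
0 \longrightarrow \Pic(X) \xrightarrow{\overline\phi} \widehat{\Pic}(X) \xrightarrow{\overline\psi} C^0(X^\an) \longrightarrow 0
\]
follows either from the nine lemma applied to the resulting commutative diagram with exact rows and columns, or directly: $\overline\psi\,\overline\phi=0$ as above, and if a class with representative $(D,g)$ has $\overline\psi([(D,g)])=\psi(D,g)=g-g_D^\mathrm{c}=0$, then $g=g_D^\mathrm{c}$, so $(D,g)=\phi(D)$ and $[(D,g)]=\overline\phi([D])$.

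Finally, the splitting descends: composing $s$ with the projection $\widehat{\Div}(X)\to\widehat{\Pic}(X)$ gives a homomorphism $\overline s\colon C^0(X^\an)\to\widehat{\Pic}(X)$, $h\mapsto[(0,h)]$, with $\overline\psi\,\overline s=\mathrm{id}$, whence $\widehat{\Pic}(X)=\overline\phi(\Pic(X))\oplus\overline s(C^0(X^\an))\simeq\Pic(X)\oplus C^0(X^\an)$. I do not expect a real obstacle, since every ingredient has already been established; the one step that genuinely relies on the earlier development is the identity $g_{(f)}^\mathrm{c}=-\log|f|$ of Proposition \ref{cgp}(1), which is exactly what reconciles the algebraic and arithmetic notions of ``principal'' and lets the quotient argument go through cleanly.
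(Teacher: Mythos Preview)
Your argument is correct and follows essentially the same route as the paper: both reduce to showing $\psi(D,g)=0\Rightarrow g=g_D^\mathrm{c}$, hence $(D,g)=\phi(D)$, with injectivity of $\overline\phi$ and surjectivity of $\overline\psi$ already established beforehand. Your treatment is in fact more careful, since you make explicit the identification $\phi(P)\cong\widehat P$ via Proposition~\ref{cgp}(1) and actually exhibit the splitting $h\mapsto[(0,h)]$, whereas the paper leaves the ``In particular'' as an unproven (though obvious) consequence.
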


\begin{proof}
Since $\psi \circ \phi = 0$ by definition, we have $\overline{\psi} \circ \overline{\phi} = 0$.
Let $(D,g) \in \widehat{\Div}(X)$ such that $\overline{\psi}(D,g) = 0$.
Then there are $H \in \Div(X)$ and $s \in K(X)$ such that $(D,g) = (H,g_H^\mathrm{c}) + (s,-\log|s|)$.
By Proposition \ref{cgp}, we have $g = g_H^\mathrm{c} - \log|s| = g_D^\mathrm{c}$, which implies that $(D,g) = \phi(D)$.
Hence we obtain that $\mathrm{Im}\, \overline{\phi} = \mathrm{Ker}\, \overline{\psi}$.
\end{proof}

For $X = \Spec K$, the Berkovich space $X^\an$ associated with $X$ is a single point.
Hence we have $C^0(X^\an) = \R$, which implies that
\[ \widehat{\Pic}(X) \simeq \R. \]
This result corresponds to the fact that the Picard group of $\Ps^1$ is isomorphic to $\Z$ and the arithmetic Picard group of $\Spec \Z$ is isomorphic to $\R$.

\subsection{Scaling action for Green functions}

We saw that the multiplicative group $\R_{>0}$ acts $X^\an$ (see Section \ref{BS}).
Here we see that it also acts the set of $D$-Green functions of $C^0$-type.
Let $(D,g)$ be an adelic $\R$-Cartier divisor on $X$.
For $r \in \R_{>0}$, we define
\[ r^*g(x) := rg(x^{1/r}). \]
The function $r^*g$ is also $D$-Green function of $C^0$-type.
In fact, let $U$ be a non-empty Zariski open subset of $X$ and $f$ be a local equation of $D$ on $U$.
Then we have
\[ r^*g(x) + \log|f|_x = rg(x^{1/r}) + r\log|f|_x^{1/r} = r(g(x^{1/r}) + \log|f|_{x^{1/r}}) \]
on $U^\an$, which is continuous.
This action is also called the \textit{sacling action}.

\begin{prop}
\label{sag}
Let $D$ be an $\R$-Cartier divisor on $X$, $g,g'$ be $D$-Green functions of $C^0$-type.
\begin{enumerate}
\item The scaling action is linear, that is, for $r \in \R_{>0}$,
\[ r^*(g + g') = r^*g + r^*g'. \]
\item The scaling action preserves the canonical Green function $g_D^\mathrm{c}$.
\end{enumerate}
\end{prop}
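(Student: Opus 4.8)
The plan is to prove part (1) by directly unwinding the definition of the scaling action on Green functions, and part (2) by first showing that the scaling action on $X^\an$ is compatible with the reduction map, and then computing $g_D^\mathrm{c}$ through a single local equation.

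For (1), fix $r \in \R_{>0}$ and a point $x \in X^\an$, and write $x^{1/r} := (1/r)^*x$. Since $r^*g(x) = r\,g(x^{1/r})$ by definition, we get
\[ r^*(g+g')(x) = r\bigl(g(x^{1/r}) + g'(x^{1/r})\bigr) = r\,g(x^{1/r}) + r\,g'(x^{1/r}) = r^*g(x) + r^*g'(x), \]
and as $x$ and $r$ are arbitrary this gives $r^*(g+g') = r^*g + r^*g'$. This step is purely formal.

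For (2) the crucial point is the identity $\red(x^r) = \red(x)$ for every $x \in X^\an$ and every $r \in \R_{>0}$, i.e.\ that the scaling action commutes with the reduction map. To see this, note that replacing $\ab_x$ by its $r$-th power $\ab_x^r$ leaves the topology on $\kappa(x)$ unchanged, so $\widehat{\kappa}(x^r) = \widehat{\kappa}(x)$ as a topological field; moreover the valuation ring and its maximal ideal are literally the same subsets, $o_{x^r} = \{\,f : |f|_x^r \leq 1\,\} = o_x$ and $m_{x^r} = \{\,f : |f|_x^r < 1\,\} = m_x$. Hence the morphisms $p_{x^r} = p_x$ and $\iota_{x^r} = \iota_x$ entering the construction of $\red$ coincide, so the morphism $\phi_{x^r} = \phi_x$ produced by the valuative criterion of properness is the same, and taking the image of $m_{x^r} = m_x$ under it yields $\red(x^r) = \red(x)$. (On the divisorial tree $X_\dv^\an$ this is also visible directly from the parametrization of $(\eta^\an,x^\an)$ by $(0,+\infty)$ together with $\red(\xi) = x$ for $\xi \in (\eta^\an,x^\an)$, and one could pass to all of $X^\an$ via Lemma \ref{den}; but the valuation-ring argument is cleaner and avoids continuity bookkeeping.)

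Granting this, fix $x \in X^\an$ and $r \in \R_{>0}$ and choose a local equation $f \in K(X)_\R^\times$ of $D$ around $\red(x)$. Since $\red(x^{1/r}) = \red(x)$, the same $f$ is a local equation of $D$ around $\red(x^{1/r})$, so $g_D^\mathrm{c}(x^{1/r}) = -\log|f|_{x^{1/r}}$. Because $x^{1/r} = (j(x), \ab_x^{1/r})$ we have $|f|_{x^{1/r}} = |f|_x^{1/r}$, and therefore
\[ r^*g_D^\mathrm{c}(x) = r\,g_D^\mathrm{c}(x^{1/r}) = -r\log|f|_x^{1/r} = -\log|f|_x = g_D^\mathrm{c}(x). \]
Since $x$ is arbitrary, $r^*g_D^\mathrm{c} = g_D^\mathrm{c}$. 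The only non-formal ingredient is the compatibility $\red(x^r) = \red(x)$, which I expect to be the main obstacle; alternatively one could reduce (2) to the case of an integral Cartier divisor via Proposition \ref{cgp}(2) and part (1), but this still leaves one to analyze $g_D^\mathrm{c}$ pointwise through a local equation, so it offers no real shortcut over the argument above.
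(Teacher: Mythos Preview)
Your proof is correct and follows the same approach as the paper: a direct pointwise computation using a local equation of $D$ around $\red(x)$. The paper's argument is terser and does not explicitly justify that $\red(x^{1/r}) = \red(x)$ (so that the same local equation serves at $x^{1/r}$), whereas you supply this step via the valuation-ring description of the reduction map; this is a genuine clarification rather than a different method.
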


\begin{proof}
(1) It is clear by definition.

(2) Let $x \in X^\an$ and $f$ be a local equation of $D$ around $\red(x)$.
Then $g_D^\mathrm{c}(x) = -\log|f|_x$.
Hence we have
\[ r^*g_D^\mathrm{c}(x) = -r\log|f|_{x^{1/r}} = -r\log|f|_x^{1/r} = -\log|f|_x = g_D^\mathrm{c}(x) \]
for any $r \in \R_{>0}$.
\end{proof}

\section{Arithmetic volume}
In this section, we introduce the arithmetic volume function and the bigness of adelic $\R$-Cartier divisors.
And we study some properties of the arithmetic volume function.
Throughout this section, let $K$ be a trivially valued field and $X$ be a normal projective variety over $\Spec K$.

\subsection{Big adelic $\R$-Cartier divisors}
We introduce the counterparts of $h^0(D)$ and $\vol(\ndot)$ in Arakelov geometry, which is given by Chen and Moriwaki \cite{chen2017sufficient}.
We set
\[ \adeg_+(D,g) := \int_0^{+\infty} \dim_K \Fil^t(H^0(D),\nm_g) \ \mathrm{d}t, \]
and
\[ \avol(D,g) := \limsup_{n \rightarrow +\infty} \frac{\adeg_+(nD,ng)}{n^{d+1}/(d+1)!}, \]
where $d = \dim X$.

\begin{defi}
We say that an adelic $\R$-Cartier divisor $(D,g)$ is \textit{big} if $\avol(D,g) > 0$.
\end{defi}

\begin{prop}
\label{mbig}
Let $(D,g)$ be an adelic $\R$-Cartier divisor on $X$.
If $(D,g)$ is big, then $(D,g)$ is $\mu$-finite, $D_{\mu(g)}$ is big and $\lmasy(D,g) > 0$.
In particular, $D$ is big.
\end{prop}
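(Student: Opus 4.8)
\emph{Proof proposal.} The plan is to deduce the assertions in the order: (a) $\lmasy(D,g) > 0$ together with the existence of a strictly small section of $H^0(n_0 D)$ for some $n_0 \geq 1$; (b) $\mu$-finiteness of $(D,g)$; (c) bigness of $D_{\mu(g)}$; (d) bigness of $D$.

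Since $\avol(D,g) > 0$, there is $n_0 \geq 1$ with $\adeg_+(n_0 D, n_0 g) > 0$, so $\Fil^{t_0}(H^0(n_0 D),\nm_{n_0 g}) \neq \{0\}$ for some $t_0 > 0$; a nonzero element $s_0$ of it satisfies $\|s_0\|_{n_0 g} \leq e^{-t_0} < 1$. This gives $\lm(n_0 D, n_0 g) \geq t_0 > 0$, hence $\lmasy(D,g) = \sup_{n\geq 1}\frac{1}{n}\lm(nD,ng) \geq t_0/n_0 > 0$. Moreover $s_0$ is strictly small, so $(n_0 D, n_0 g) + \widehat{(s_0)} = (n_0 D + (s_0),\, n_0 g - \log|s_0|)$ is effective; thus $(n_0 D, n_0 g)$ has the Dirichlet property and is therefore $\mu$-finite, by the observation recorded after the definition of $D_{\mu(g)}$. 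Since $\mu_x(ng) = n\mu_x(g)$ for every $x \in X^{(1)}$ and every integer $n \geq 1$, it follows at once that $(D,g)$ is $\mu$-finite, and in particular no $\mu_x(g)$ equals $-\infty$.

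The decisive estimate is a uniform bound $\lm(nD,ng) \leq nM$. Write $g = g_D^{\mathrm{c}} + h$ with $h := h^{\an}_{(D,g)}$, a continuous function on the compact space $X^{\an}$, and put $M := \sup_{X^{\an}}|h| < +\infty$. For $s \in H^0(nD)\setminus\{0\}$ one has $|s|_{ng}(x) = e^{-nh(x)}|s|_{ng_D^{\mathrm{c}}}(x) \geq e^{-nM}|s|_{ng_D^{\mathrm{c}}}(x)$ for all $x$, while $\|s\|_{ng_D^{\mathrm{c}}} \geq |s|_{ng_D^{\mathrm{c}}}(\eta^{\an}) = 1$ because $g_D^{\mathrm{c}}(\eta^{\an}) = 0$ (a local equation of $D$ at the generic point is a unit) and $|s|_{\eta^{\an}} = 1$ ($\ab$ being trivial). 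Hence $\|s\|_{ng} \geq e^{-nM}$, so $\Fil^t(H^0(nD),\nm_{ng}) = \{0\}$ for $t > nM$, i.e. $\lm(nD,ng) \leq nM$. Then, using $\Fil^t \subseteq \Fil^0$ for $t \geq 0$ (Proposition \ref{rfil}),
\[ \adeg_+(nD,ng) = \int_0^{\lm(nD,ng)} \dim_K \Fil^t(H^0(nD),\nm_{ng})\ \mathrm{d}t \leq nM\cdot\dim_K\Fil^0(H^0(nD),\nm_{ng}). \]
On the other hand $\avol(D,g) > 0$ furnishes $c > 0$ with $\adeg_+(nD,ng) \geq c\,n^{d+1}$ for infinitely many $n$, so $\dim_K\Fil^0(H^0(nD),\nm_{ng}) \geq (c/M)n^d$ for those $n$. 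By Corollary \ref{as1} applied to $(nD,ng)$, together with $(nD)_{\mu(ng)} = nD_{\mu(g)}$, we get $\dim_K\Fil^0(H^0(nD),\nm_{ng}) = \dim_K H^0(\lfloor nD_{\mu(g)}\rfloor) = \mathfrak{h}_{D_{\mu(g)}}(n)$; hence $\vol(D_{\mu(g)}) \geq \limsup_{n\to+\infty}\mathfrak{h}_{D_{\mu(g)}}(n)/(n^d/d!) \geq c\,d!/M > 0$, so $D_{\mu(g)}$ is big. Finally $\mu_x(g) \leq \ord_x(D)$ for all $x$ by Proposition \ref{mu1}, so $D_{\mu(g)} \leq D$ as $\R$-Weil divisors; since $E_1 \geq E_2$ forces $\Ox_X(\lfloor tE_2\rfloor) \subseteq \Ox_X(\lfloor tE_1\rfloor)$ and hence $\mathfrak{h}_{E_2} \leq \mathfrak{h}_{E_1}$, we conclude $\vol(D) \geq \vol(D_{\mu(g)}) > 0$, i.e. $D$ is big.

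The main obstacle is the uniform bound $\lm(nD,ng) = O(n)$: without it, the fact that $\adeg_+(nD,ng)$ grows like $n^{d+1}$ could a priori be carried entirely by $\lm(nD,ng)$ rather than by $\dim_K\Fil^0(H^0(nD),\nm_{ng})$, and the identification of the latter with $\mathfrak{h}_{D_{\mu(g)}}(n)$ via Corollary \ref{as1} would be useless. The remaining points — $\mu_x(ng) = n\mu_x(g)$, the Dirichlet property implying $\mu$-finiteness, $D_{\mu(g)} \leq D$, and monotonicity of $\vol$ for $\R$-Weil divisors — are routine bookkeeping of the definitions and of the compatibilities already established.
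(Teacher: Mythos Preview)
Your proof is correct and rests on the same core inequality as the paper's: bounding $\adeg_+(nD,ng)$ by the product of $\dim_K\Fil^0(H^0(nD),\nm_{ng})$ and the length of the support of $t\mapsto\dim_K\Fil^t$. The execution differs in two respects. First, you establish $\lmasy(D,g)>0$ and $\mu$-finiteness directly from a single $n_0$ with $\adeg_+(n_0D,n_0g)>0$, and then turn to bigness of $D_{\mu(g)}$; the paper instead writes the one-line estimate $\adeg_+(nD,ng)\leq\dim_K\Fil^0(H^0(nD),\nm_{ng})\cdot\max\{\lm(nD,ng),0\}$, divides by $n^{d+1}/(d+1)!$, and passes to the limit to obtain $\avol(D,g)\leq(d+1)\vol(D_{\mu(g)})\max\{\lmasy(D,g),0\}$, reading off both $\vol(D_{\mu(g)})>0$ and $\lmasy(D,g)>0$ from the single product inequality. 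Second, for the length of the support you prove an explicit a priori bound $\lm(nD,ng)\leq nM$ via the decomposition $g=g_D^{\mathrm{c}}+h$ and compactness of $X^{\an}$, whereas the paper just uses $\lm(nD,ng)$ itself as the bound and lets Fekete's lemma produce $\lmasy$ in the limit. Your route is more explicit and gives a quantitative constant; the paper's is shorter and avoids invoking the canonical Green function, at the price of a slightly informal passage to the limit in the product.
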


\begin{proof}
If $(D,g)$ is big, $(D,g)$ is $\Q$-linearly equivalent to an effective adelic $\R$-Cartier divisor, which implies that $(D,g)$ is $\mu$-finite.
By definition, for any integer $n > 0$, we have
\[ \adeg_+(nD,ng) \leq \dim_k\Fil^0(H^0(nD),\nm_{ng}){\max} \{\lm(nD,ng),0 \}. \]
Therefore we have
\[ \avol(D,g) \leq (d+1)\vol(D_{\mu(g)}){\max} \{\lmasy(D,g),0 \}, \]
by corollary \ref{as1}.
Since $\avol(D,g) > 0$, we have $\vol(D_{\mu(g)}) > 0$ and $\lmasy(D,g) > 0$.
\end{proof}

\subsection{Existence of limit of the arithmetic volume}

Firstly, we define
\[ \nu_{\max}(D,g) := \sup \{ t \in \R \,|\, (D,g-t) \ \text{is $\mu$-finite} \}. \]

\begin{lem}
Let $(D,g)$ be an adelic $\R$-Cartier divisor on $X$.
We have
\[ \lmasy(D,g) \leq \nu_{\max}(D,g) \leq g(\eta^\an). \]
\end{lem}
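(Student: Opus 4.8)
The plan is to prove the two inequalities separately, working with the definition of $\nu_{\max}$ as a supremum and exploiting the linearity of the various constructions (scaling, $\mu_x$, height functions) under translation of the Green function by a constant.

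First I would handle the right-hand inequality $\nu_{\max}(D,g) \leq g(\eta^\an)$. Suppose $t \in \R$ is such that $(D, g-t)$ is $\mu$-finite. By the remark following the definition of $\mu_x$, $\mu_x(g-t)$ is finite (rather than $-\infty$) for at least one, hence every, $x \in X^{(1)}$, and this is equivalent to $(g-t)(\eta^\an) \geq 0$, i.e. $g(\eta^\an) \geq t$. Indeed, for $\xi \in (\eta^\an, x^\an)$ we have $(g-t)(\xi)/t(\xi) = g(\xi)/t(\xi) - t/t(\xi)$, and as $\xi \to \eta^\an$ the parameter $t(\xi) \to 0^+$; if $g(\eta^\an) - t < 0$ then by continuity of $g$ near $\eta^\an$ the numerator $(g-t)(\xi)$ stays negative while $t(\xi) \to 0^+$, forcing the infimum to be $-\infty$. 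Taking the supremum over all admissible $t$ gives $\nu_{\max}(D,g) \leq g(\eta^\an)$.

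Next, the left-hand inequality $\lmasy(D,g) \leq \nu_{\max}(D,g)$. The key point is that a small section of a translated divisor gives control on $\mu$-finiteness. Fix $n \geq 1$ and let $s \in H^0(nD) \setminus \{0\}$ with $\|s\|_{ng} = e^{-\lm(nD,ng)}$, so that $ng - \log|s| \geq \lm(nD, ng)$ on $X^\an$; that is, $(nD,ng) + \widehat{(s)} \geq (0, \lm(nD,ng))$, or equivalently $(nD, ng - \lm(nD,ng)) + \widehat{(s)}$ is effective. An effective adelic $\R$-Cartier divisor is $\mu$-finite, and $\mu$-finiteness is invariant under $\R$-linear equivalence by the formula $(D+(s))_{\mu(g-\log|s|)} = D_{\mu(g)} + (s)$; hence $(nD, ng - \lm(nD,ng))$ is $\mu$-finite, and after scaling by $1/n$ (using that $\mu$-finiteness of $(nD, n h)$ is equivalent to that of $(D, h)$ — which follows since $\mu_x(nh) = n\mu_x(h)$) we get that $(D, g - \tfrac{1}{n}\lm(nD,ng))$ is $\mu$-finite. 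Therefore $\tfrac{1}{n}\lm(nD,ng) \leq \nu_{\max}(D,g)$ for every $n$, and passing to the supremum over $n$ (which, as noted in the excerpt via Fekete, equals $\lmasy(D,g)$) yields the claim.

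The main obstacle I expect is the bookkeeping around scaling the index $n$: one must check carefully that translating $g$ by a constant and multiplying the pair by $n$ interact correctly, i.e. that $\mu_x$ scales linearly in the Green function so that $\mu$-finiteness of $(nD, ng - c)$ for $c = \lm(nD,ng)$ transfers to $\mu$-finiteness of $(D, g - c/n)$. This is essentially immediate from $\mu_x(\lambda h) = \lambda\mu_x(h)$ for $\lambda > 0$, which in turn is clear from the definition of $\mu_x$ as an infimum of $g(\xi)/t(\xi)$, but it is the step where sign conventions and the role of $X^{(1)}$ versus the generic point must be tracked with care.
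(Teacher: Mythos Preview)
Your proposal is correct and follows essentially the same route as the paper. For $\nu_{\max}(D,g) \leq g(\eta^\an)$ the paper argues the contrapositive (if $t > g(\eta^\an)$ then $\mu_x(g-t) = -\infty$), which is exactly your argument read backwards; for $\lmasy(D,g) \leq \nu_{\max}(D,g)$ both you and the paper take a section $s$ realizing $\lm(nD,ng)$, observe that $(nD, ng - \lm(nD,ng))$ is $\R$-linearly equivalent to an effective adelic divisor and hence $\mu$-finite, and then pass to the supremum over $n$---your version is just more explicit about the scaling step $\mu_x(nh) = n\mu_x(h)$, which the paper suppresses. One small remark: you should restrict to those $n$ with $H^0(nD) \neq \{0\}$ (equivalently $\lm(nD,ng) > -\infty$), as otherwise no such $s$ exists; the remaining $n$ contribute $-\infty$ to the supremum and are harmless.
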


\begin{proof}
Clearly we can assume $\lmasy(D,g) \in \R$.
For a sufficiently large integer $n > 0$, there is a non-zero element $s \in H^0(nD) \setminus \{0\}$ such that $\|s\|_{ng} \leq e^{-\lm(nD,ng)}$, which is equivalent to $\|s\|_{ng-\lm(nD,ng)} \leq 1$.
Therefore $(D,g-\lm(nD,ng)/n)$ is effecive, which implies
\[ \frac{1}{n}\lm(nD,ng) \leq \nu_{\max}(D,g). \]
Taking a supremum, we get $\lmasy(D,g) \leq \nu_{\max}(D,g)$.

Next we show $\nu_{\max}(D,g) \leq g(\eta^\an)$.
For any $\epsilon > 0$, $g(\zeta) - (g(\eta^\an) + \epsilon)$ is negative around $\zeta = \eta^\an$.
So we have $\mu_x(g-(g(\eta^\an) + \epsilon)) = -\infty$ for any $x \in X^{(1)}$, which implies that $(D,g-(g(\eta^\an) + \epsilon))$ is not $\mu$-finite and $\nu_{\max}(D,g) \leq g(\eta^\an) + \epsilon$.
Since $\epsilon$ is arbitrary, we conclude that $\nu_{\max}(D,g) \leq g(\eta^\an)$.
\end{proof}

\begin{rem}
The above inequality is sometimes strict.
For example, let $X = \Ps_K^1 = \mathrm{Proj}K[T_0,T_1]$, $z=T_1/T_0$, $D = \{T_0=0\}$ and $x_\infty = (0:1)$.
Let $g_1 = 2\log\max\{2,|z|\} - \log\max\{1,|z|\}$.
Then $g_1(\xi) = 2\log2$ for $\xi \in [\eta^\an,x^\an]$ for $x \neq x_\infty$ and
\[ g_1(\xi) = 
\begin{cases}
2\log2 - \xi & (0 \leq \xi \leq \log2) \\
\xi & (\log2 \leq \xi),
\end{cases}
\]
on $[\eta^\an,x_\infty^\an]$.
Hence we have
\[ \mu_x(g_1-t) =
\begin{cases}
0 & (x \neq x_\infty) \\
\log2 - t & (x = x_\infty),
\end{cases}
\]
for $t \leq 2\log2$ and $\mu_x(g_1-t) = -\infty$ for $t > 2\log2$ and all closed point $x$ of $X$.
Therefore we obtain that $\lmasy(D,g) = \log2$ and $\nu_{\max}(D,g) = 2\log2$.

Next, we set 
\[ h(\xi) =
\begin{cases}
-\xi & (0 \leq \xi \leq 1) \\
-1 & (1 \leq \xi),
\end{cases}
\]
for $\xi \in [\eta^\an,x^\an]$ and all closed point $x$ of $X$, which is a continuous function on $X^\an$.
We define a $D$-Green function $g_2$ as $\log\max\{1,|z|\} + h$.
Then we have
\[ \mu_x(g_2-t)=
\begin{cases}
0 & (x \neq x_\infty) \\
1 & (x = x_\infty)
\end{cases}
\]
for $t \leq -1$,
\[ \mu_x(g_2-t)=
\begin{cases}
-1 - t & (x \neq x_\infty) \\
-t & (x = x_\infty)
\end{cases}
\]
for $-1 \leq t \leq 0$ and $\mu_x(g_2-t) = -\infty$ for $t >0$ and all closed point $x$ of $X$.
Hence we obtain that $\nu_{\max}(D,g_2) = -1$ and $g(\eta^\an) = 0$.
\end{rem}

For any integer $n > 0$, let
\[ P_n^{(D,g)}(t) := \frac{\dim_K\Fil^{nt}(H^0(nD),\nm_{ng})}{n^d/d!} \]
where $d = \dim X$.
If there is no confusion, we write it simply $P_n(t)$.
By definition, if $(D,g)$ is $\mu$-finite,
\[ P_n(0) = \frac{\dim_K\Fil^0(H^0(nD),\nm_{ng})}{n^d/d!} = \frac{\dim_KH^0(nD_{\mu(g)})}{n^d/d!}, \]
so we have
\begin{equation}
\label{p1}
\lim_{n \rightarrow +\infty}P_n(0) = \vol(D_{\mu(g)}).
\end{equation}

\begin{lem}
Let $(D,g)$ be an adelic $\R$-Cartier divisor on $X$.
For any $\epsilon \in \R$, we have
\[ P_n^{(D,g-\epsilon)}(t) = P_n^{(D,g)}(t+\epsilon). \]
\end{lem}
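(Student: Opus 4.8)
The plan is to unwind both sides directly from the definitions, since the only thing going on is that subtracting a constant from a Green function rescales the sup-norm by a fixed exponential factor. First I would note that $g-\epsilon$ is again a $D$-Green function of $C^0$-type, because it differs from $g$ by the constant (hence continuous) function $-\epsilon$, so Proposition \ref{pg}(2) applies; thus $P_n^{(D,g-\epsilon)}(t)$ makes sense and the underlying divisor is still $D$, giving $H^0(n(D))=H^0(nD)$ on both sides. The $n$-th Green function attached to $(D,g-\epsilon)$ is $n(g-\epsilon)=ng-n\epsilon$.

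Next I would compute the effect on the norm. For $s\in H^0(nD)\setminus\{0\}$, by the definition $|s|_{h}=e^{-h+\log|s|}$ of the fibre function we get
\[ |s|_{ng-n\epsilon}(x)=e^{-(ng-n\epsilon)(x)+\log|s|_x}=e^{n\epsilon}\,|s|_{ng}(x)\qquad(x\in X^\an), \]
and taking the supremum over $X^\an$ yields $\|s\|_{ng-n\epsilon}=e^{n\epsilon}\|s\|_{ng}$. Consequently, for any $t\in\R$,
\[ \Fil^{nt}\bigl(H^0(nD),\nm_{n(g-\epsilon)}\bigr)=\{s:\|s\|_{ng-n\epsilon}\le e^{-nt}\}=\{s:\|s\|_{ng}\le e^{-n(t+\epsilon)}\}=\Fil^{n(t+\epsilon)}\bigl(H^0(nD),\nm_{ng}\bigr). \]
Taking $\dim_K$ of both sides and dividing by $n^d/d!$ gives exactly $P_n^{(D,g-\epsilon)}(t)=P_n^{(D,g)}(t+\epsilon)$.

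I do not expect any genuine obstacle here: the statement is a bookkeeping identity, and the only thing to be a little careful about is keeping the factors of $n$ straight (the Green function is scaled by $n$, so the norm is rescaled by $e^{n\epsilon}$, not $e^{\epsilon}$), and observing that the filtration index $nt$ also carries the factor $n$ so that the shift by $n\epsilon$ exactly matches the shift of the argument $t$ by $\epsilon$.
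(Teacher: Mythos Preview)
Your argument is correct and is essentially the paper's own proof: the paper likewise observes that $\|s\|_{n(g-\epsilon)}\le e^{-nt}$ is equivalent to $\|s\|_{ng}\le e^{-n(t+\epsilon)}$, concludes the equality of filtrations, and divides by $n^d/d!$. You just spell out the intermediate norm identity $\|s\|_{ng-n\epsilon}=e^{n\epsilon}\|s\|_{ng}$ a bit more explicitly.
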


\begin{proof}
For any $s \in H^0(nD)$, we have
\[ \|s\|_{n(g -\epsilon)} \leq e^{-nt} \Leftrightarrow \|s\|_{ng} \leq e^{-n(t+\epsilon)}. \]
Hence we get
\[\dim_K\Fil^{nt}(H^0(nD),\nm_{n(g-\epsilon)}) = \dim_K\Fil^{n(t+\epsilon)}(H^0(nD),\nm_{ng}), \]
which implies that $P_n^{(D,g-\epsilon)}(t) = P_n^{(D,g)}(t+\epsilon)$.
\end{proof}

In particular,
\[ P_n^{(D,g)}(t) = P_n^{(D,g-t)}(0). \]
So by the equation (\ref{p1}), we have
\begin{equation}
\label{p2}
\lim_{n \rightarrow +\infty}P_n(t) = \vol(D_{\mu(g-t)})
\end{equation}
for any $t < \nu_{\max}(D,g)$.

If we define
\begin{eqnarray*}
F_{(D,g)}(t) := \left \{
\begin{array}{ll}
\vol(D_{\mu(g-t)}) & (t < \lmasy(D,g)) \\
0 &  (t > \lmasy(D,g)),
\end{array}
\right .
\end{eqnarray*}
we get the following theorem by the equation (\ref{p2}):

\begin{thm}
Let $(D,g)$ be an adelic $\R$-Cartier divisor on $X$.
The sequence $\{ P_n(t) \}_{n \geq 1}$ converges pointwise to $F_{(D,g)}(t)$ on $\R\setminus\{\lmasy(D,g)\}$.
\end{thm}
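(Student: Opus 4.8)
The plan is to split $\R \setminus \{\lmasy(D,g)\}$ into the two half-lines $\{t < \lmasy(D,g)\}$ and $\{t > \lmasy(D,g)\}$ and argue on each separately: on the first the limit will be $\vol(D_{\mu(g-t)})$, and on the second it will be $0$. The bulk of the analytic work is already done, since the pointwise limit relation (\ref{p2}) packages the continuity of the volume function for $\R$-Weil divisors together with the identification of $\Fil^0$ with $H^0$ of an associated Weil divisor from Corollary \ref{as1}. What remains is essentially bookkeeping around the two regimes.

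First I would handle $t < \lmasy(D,g)$. Here the inequality $\lmasy(D,g) \le \nu_{\max}(D,g)$ established above gives $t < \nu_{\max}(D,g)$, so (\ref{p2}) applies directly and yields $\lim_{n \to +\infty} P_n(t) = \vol(D_{\mu(g-t)})$, which is exactly $F_{(D,g)}(t)$ by definition.

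Next I would handle $t > \lmasy(D,g)$. Recalling from Fekete's lemma that $\lmasy(D,g) = \sup_{n \ge 1} \frac{1}{n}\lm(nD,ng)$, we get $\frac{1}{n}\lm(nD,ng) \le \lmasy(D,g) < t$, hence $\lm(nD,ng) < nt$, for every $n$. Since by Lemma \ref{norm1} the quantity $\lm(H^0(nD),\nm_{ng})$ is the largest $s \in \R$ with $\Fil^s(H^0(nD),\nm_{ng}) \ne \{0\}$, it follows that $\Fil^{nt}(H^0(nD),\nm_{ng}) = \{0\}$, so $P_n(t) = 0$ for all $n$ and therefore $\lim_{n \to +\infty} P_n(t) = 0 = F_{(D,g)}(t)$.

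I do not anticipate a genuine obstacle. The one point worth spelling out is the consistency of the two descriptions on the overlap: when the interval $\lmasy(D,g) < t < \nu_{\max}(D,g)$ is nonempty, the argument above shows $P_n(t) \equiv 0$, so (\ref{p2}) forces $\vol(D_{\mu(g-t)}) = 0$ there --- which is precisely why truncating $F_{(D,g)}$ at $\lmasy(D,g)$ rather than at $\nu_{\max}(D,g)$ is legitimate. One should also note the degenerate case $\lmasy(D,g) = -\infty$, where $H^0(nD) = \{0\}$ for all $n$ and every $P_n$ vanishes identically, so the claim is trivial.
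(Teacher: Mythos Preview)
Your proof is correct and follows exactly the approach the paper has in mind: the paper simply cites equation (\ref{p2}) for the regime $t < \lmasy(D,g)$ and leaves the case $t > \lmasy(D,g)$ implicit (the fact that $P_n(t) = 0$ there is only spelled out later, inside the proof of Theorem \ref{vol1}). Your write-up fills in these details cleanly; the appeal to Lemma \ref{norm1} is even a slight overcitation, since the bare definition of $\lm$ as a supremum already forces $\Fil^{nt} = \{0\}$ once $nt > \lm(nD,ng)$.
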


The sequence $\{ P_n(t) \}_{n \geq 1}$ is uniformly bounded on $(0,\lmasy(D,g))$, so we get the main theorem in this section by using bounded convergence theorem:

\begin{thm}
\label{vol1}
Let $(D,g)$ be an adelic $\R$-Cartier divisor on $X$.
We have
\begin{align*}
\avol(D,g) &= \lim_{n \rightarrow +\infty} \frac{\adeg_+(nD,ng)}{n^{d+1}/(d+1)!} \\
&= (d+1)\int_0^{\lmasy(D,g)} F_{(D,g)}(t) \ \mathrm{d}t.
\end{align*}
\end{thm}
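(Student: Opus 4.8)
The plan is to reduce the statement to the bounded convergence theorem, feeding it the pointwise convergence $P_n \to F_{(D,g)}$ established in the preceding theorem. First I would perform the change of variables $t = nu$ in the integral defining $\adeg_+$. Since $\dim_K\Fil^{nu}(H^0(nD),\nm_{ng}) = \tfrac{n^d}{d!}P_n(u)$, this gives
\[ \adeg_+(nD,ng) = \int_0^{+\infty} \dim_K \Fil^{t}(H^0(nD),\nm_{ng})\,\mathrm{d}t = \frac{n^{d+1}}{d!}\int_0^{+\infty} P_n(u)\,\mathrm{d}u, \]
and therefore
\[ \frac{\adeg_+(nD,ng)}{n^{d+1}/(d+1)!} = (d+1)\int_0^{+\infty} P_n(u)\,\mathrm{d}u . \]
So everything comes down to showing that $\int_0^{+\infty} P_n(u)\,\mathrm{d}u$ converges (not merely $\limsup$'s) to $\int_0^{\lmasy(D,g)} F_{(D,g)}(u)\,\mathrm{d}u$.

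Next I would record the two pieces of uniform control needed. For the support: $\Fil^{nu}(H^0(nD),\nm_{ng}) \neq \{0\}$ forces $nu \le \lm(nD,ng)$, so $P_n$ is supported on $[0,\tfrac1n\lm(nD,ng)] \subseteq [0,\lmasy(D,g)]$ by the Fekete identity $\lmasy(D,g) = \sup_{n\ge1}\tfrac1n\lm(nD,ng)$; and this interval is genuinely bounded because $\lmasy(D,g) \le g(\eta^\an) < +\infty$ by the lemma above. (If $\lmasy(D,g) = -\infty$, then $H^0(nD) = \{0\}$ for every $n \ge 1$, so $\adeg_+(nD,ng) = 0$ and $F_{(D,g)} \equiv 0$, and the identity is trivial; so I may assume $\lmasy(D,g) \in \R$.) For the uniform bound: $\dim_K\Fil^{nu}(H^0(nD),\nm_{ng}) \le h^0(nD)$, and since $X$ is projective of dimension $d$ one has $h^0(nD) = O(n^d)$, so there is a constant $M$ with $0 \le P_n(u) \le M$ for all $n \ge 1$ and all $u$.

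Then I would invoke the previous theorem: $P_n(u) \to F_{(D,g)}(u)$ for every $u \in \R\setminus\{\lmasy(D,g)\}$, i.e. Lebesgue-almost everywhere on the fixed bounded interval $[0,\lmasy(D,g)]$. Together with the uniform bound $M$, the bounded convergence theorem gives
\[ \lim_{n\to+\infty}\int_0^{+\infty} P_n(u)\,\mathrm{d}u = \int_0^{\lmasy(D,g)} F_{(D,g)}(u)\,\mathrm{d}u, \]
and multiplying by $d+1$ shows that $\adeg_+(nD,ng)/(n^{d+1}/(d+1)!)$ has a true limit. In particular its $\limsup$, which is $\avol(D,g)$ by definition, coincides with that limit, proving all three equalities simultaneously.

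The change of variables and the a.e.\ convergence input are routine (the latter already proved), so the only load-bearing step is the uniform control of $\{P_n\}$: the support bound rests on the Fekete identity plus the finiteness $\lmasy(D,g) \le g(\eta^\an)$, and the domination rests on the elementary estimate $h^0(nD) = O(n^d)$ on a projective variety. I expect no obstacle beyond stating these two facts cleanly so that the bounded convergence theorem applies.
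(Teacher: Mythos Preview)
Your proposal is correct and follows essentially the same approach as the paper: perform the change of variables $t \mapsto nt$ to rewrite $\adeg_+(nD,ng)/(n^{d+1}/(d+1)!)$ as $(d+1)\int_0^{\lmasy(D,g)} P_n$, then apply the bounded convergence theorem using the support control $P_n(t)=0$ for $t>\tfrac1n\lm(nD,ng)\le\lmasy(D,g)$ and the uniform boundedness of $\{P_n\}$. Your write-up is in fact a bit more explicit than the paper's, since you spell out the source of the uniform bound ($h^0(nD)=O(n^d)$) and dispose of the degenerate case $\lmasy(D,g)=-\infty$ separately.
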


\begin{proof}
By definition,
\[ \adeg_+(nD,ng) = \int_0^{\lm(nD,ng)} \dim_K\Fil^t(H^0(nD),\nm_{ng}) \ \mathrm{d}t. \]
Substituting $t$ for $nt$, we have
\[ \adeg_+(nD,ng) = n\int_0^{\frac{1}{n}\lm(nD,ng)} \dim_K\Fil^{nt}(H^0(nD),\nm_{ng}) \ \mathrm{d}t. \]
Therefore we get
\[ \frac{\adeg_+(nD,ng)}{n^{d+1}/(d+1)!} = (d+1)\int_0^{\lmasy(D,g)} P_n(t) \  \mathrm{d}t. \]
We remark that $\lm(nD,ng)/n \leq \lmasy(D,g)$ and $P_n(t) = 0$ if $t > \lm(nD,ng)/n$.
So by using bounded convergence theorem, we get the conclusion.
\end{proof}

\begin{cor}
\label{vol2}
The arithmetic volume $\avol(\ndot)$ is $(d+1)$-homogeneous.
Namely, for any adelic $\R$-Cartier divisor $(D,g)$ and $a \in \R_{>0}$, we have
\[ \avol(aD,ag) = a^{d+1} \avol(D,g). \]
\end{cor}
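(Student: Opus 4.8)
The plan is to derive the identity from the integral representation of Theorem \ref{vol1}, by tracking how each ingredient of that formula — the invariant $\mu_x$, the associated $\R$-Weil divisor $D_{\mu(\ndot)}$, and the geometric volume $\vol(\ndot)$ — transforms under the scaling $(D,g)\mapsto(aD,ag)$, and then changing variables in the integral. (For rational $a$ one could argue more directly by passing to a suitable subsequence in the limit formula of Theorem \ref{vol1}, but the integral representation handles all real $a>0$ simultaneously, so I would use it throughout.)

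First I would record the elementary scaling identities. Since $\mu_x(h)=\inf_{\xi\in(\eta^\an,x^\an)}h(\xi)/t(\xi)$ and $a>0$, for any $D$-Green function $g$ and any $t\in\R$ we have $\mu_x(ag-t)=\mu_x\!\left(a(g-t/a)\right)=a\,\mu_x(g-t/a)$ for every $x\in X^{(1)}$; here $ag-t$ is a Green function of $aD$ by Proposition \ref{pg}. Hence $(aD,ag-t)$ is $\mu$-finite if and only if $(D,g-t/a)$ is, and in that case $(aD)_{\mu(ag-t)}=a\,D_{\mu(g-t/a)}$ as $\R$-Weil divisors. Combining this with the $d$-homogeneity $\vol(aE)=a^d\vol(E)$ of the geometric volume function on a proper normal variety (recalled among the properties of $\vol(\ndot)$ in the preliminaries) gives $\vol\!\left((aD)_{\mu(ag-t)}\right)=a^d\,\vol\!\left(D_{\mu(g-t/a)}\right)$ whenever $(aD,ag-t)$, equivalently $(D,g-t/a)$, is $\mu$-finite.

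Next I would rewrite Theorem \ref{vol1} as an integral over all of $\R_{\ge0}$. Put $\widehat F_{(D,g)}(t):=\vol(D_{\mu(g-t)})$ when $(D,g-t)$ is $\mu$-finite and $\widehat F_{(D,g)}(t):=0$ otherwise. For $\lmasy(D,g)<t<\nu_{\max}(D,g)$, the relation $\lim_nP_n(t)=\vol(D_{\mu(g-t)})$ from $(\ref{p2})$, together with the pointwise convergence of $\{P_n\}$ to $F_{(D,g)}$ established just before Theorem \ref{vol1} (whose value is $0$ there), forces $\vol(D_{\mu(g-t)})=0$; and for $t\ge\nu_{\max}(D,g)$ the value $\widehat F_{(D,g)}(t)$ is $0$ by definition. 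Thus $\widehat F_{(D,g)}$ coincides with $F_{(D,g)}$ on $(0,\lmasy(D,g))$ and vanishes beyond $\lmasy(D,g)$, so Theorem \ref{vol1} becomes $\avol(D,g)=(d+1)\int_0^{+\infty}\widehat F_{(D,g)}(t)\,\mathrm{d}t$ (this also makes the degenerate case $\lmasy(D,g)\le0$, where both sides vanish, transparent). By the previous paragraph $\widehat F_{(aD,ag)}(t)=a^d\,\widehat F_{(D,g)}(t/a)$ for every $t\ge0$, so the substitution $u=t/a$ gives
\[
\avol(aD,ag)=(d+1)\int_0^{+\infty}a^d\,\widehat F_{(D,g)}(t/a)\,\mathrm{d}t=a^{d+1}(d+1)\int_0^{+\infty}\widehat F_{(D,g)}(u)\,\mathrm{d}u=a^{d+1}\avol(D,g).
\]

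The step I expect to be the main obstacle is precisely the passage to the $\int_0^{+\infty}$ formula — that is, establishing that the integrand genuinely vanishes for $t\ge\lmasy(D,g)$, not merely that it stays bounded there. The naive alternative would be to prove $\lmasy(aD,ag)=a\,\lmasy(D,g)$ and keep $\lmasy$ as the endpoint of integration, but this is awkward for irrational $a$: because $H^0(\ndot)$ of an $\R$-divisor is defined through a round-down and sees only honest rational functions, there is no tautological comparison between $H^0(naD)$ and $H^0(nD)$. Rephrasing $\avol$ as an integral of $\widehat F_{(D,g)}$ over the whole half-line removes the endpoint from the picture, so that the only scaling facts required are the positive homogeneity of $\mu_x$ in its argument and the $d$-homogeneity of $\vol(\ndot)$, both of which hold for arbitrary real $a>0$.
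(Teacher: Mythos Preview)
Your approach is essentially the same as the paper's: both invoke Theorem \ref{vol1}, use the $d$-homogeneity of the geometric volume to get $F_{(aD,ag)}(at)=a^dF_{(D,g)}(t)$, and then change variables in the integral. The only difference is that the paper simply asserts $\lmasy(aD,ag)=a\,\lmasy(D,g)$ and keeps $\lmasy$ as the upper limit of integration, whereas you route around that claim by extending the integral to $[0,+\infty)$ via your $\widehat F$; your caution about the irrational-$a$ case is reasonable, since the paper does not justify the endpoint identity, but the underlying computation is identical.
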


\begin{proof}
We have $\lmasy(aD,ag) = a\lmasy(D,g)$ and $F_{(aD,ag)}(at) = a^d F_{(D,g)}(t)$ because the algebraic volume is $d$-homogeneous.
Therefore by Theorem \ref{vol1}, we have
\begin{align*}
\avol(aD,ag) &= (d+1) \int_0^{\lmasy(aD,ag)} F_{(aD,ag)}(t) \ \mathrm{d}t \\
&= a(d+1) \int_0^{\lmasy(D,g)} a^d F_{(D,g)}(t) \ \mathrm{d}t \\
&= a^{d+1} \avol(D,g).
\end{align*}
\end{proof}

\begin{cor}
Let $(D,g)$ be an adelic $\R$-Cartier divisor and $r \in \R_{>0}$.
\begin{enumerate}
\item $\lmasy(D,r^*g) = r\lmasy(D,g)$.
\item $\avol(D,r^*g) = r\avol(D,g)$.
\end{enumerate}
\end{cor}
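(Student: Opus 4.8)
The plan is to reduce both assertions to the single scaling identity for the supremum norm,
\[ \|s\|_{r^*g} = \|s\|_g^{r} \qquad \text{for all } s \in H^0(D), \]
after which everything is formal. To prove this identity, I would use the description of the scaling action from Section \ref{BS}: since $x^{1/r} = (j(x), \ab_x^{1/r})$, one has $|s|(x) = |s|(x^{1/r})^{r}$ for a section $s$, and combining this with $r^*g(x) = r\,g(x^{1/r})$ gives $|s|_{r^*g}(x) = \bigl(|s|_g(x^{1/r})\bigr)^{r}$ for every $x \in X^\an$. Taking the supremum over $X^\an$ and using that $x \mapsto x^{1/r}$ is a bijection of $X^\an$ (the scaling action is faithful and preserves each segment $[\eta^\an, x^\an]$) yields the identity. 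Because the scaling action on Green functions is linear (Proposition \ref{sag}), one also has $n\,r^*g = r^*(ng)$, and hence $\|s\|_{n\,r^*g} = \|s\|_{ng}^{r}$ for all $s \in H^0(nD)$ and all $n$.

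The condition $\|s\|_{ng}^{r} \leq e^{-t}$ being equivalent to $\|s\|_{ng} \leq e^{-t/r}$, this gives
\[ \Fil^t\bigl(H^0(nD), \nm_{n\,r^*g}\bigr) = \Fil^{t/r}\bigl(H^0(nD), \nm_{ng}\bigr) \]
as $K$-vector spaces, for all $t \in \R$ and all $n \geq 1$. For part (1): the largest $t$ with nonzero left-hand side equals $r$ times the largest $t$ with nonzero right-hand side, that is, $\lm(nD, n\,r^*g) = r\,\lm(nD, ng)$; dividing by $n$ and letting $n \to \infty$ gives $\lmasy(D, r^*g) = r\,\lmasy(D, g)$. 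For part (2): integrating the dimension of the filtration in $t$ over $(0, +\infty)$ and substituting $u = t/r$ gives $\adeg_+(nD, n\,r^*g) = r\,\adeg_+(nD, ng)$; dividing by $n^{d+1}/(d+1)!$ and passing to the limit, which exists by Theorem \ref{vol1}, gives $\avol(D, r^*g) = r\,\avol(D, g)$.

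Alternatively, granting part (1), part (2) also follows from the integral formula in Theorem \ref{vol1}: from $r^*g - t = r^*(g - t/r)$ and $\mu_x(r^*h) = \mu_x(h)$ for all $x \in X^{(1)}$ — the latter obtained from $t(\xi^{1/r}) = t(\xi)/r$ by a change of variable in the infimum defining $\mu_x$ — one gets $D_{\mu(r^*g - t)} = D_{\mu(g - t/r)}$, so that $F_{(D, r^*g)}(t) = F_{(D,g)}(t/r)$ on $(0, r\lmasy(D,g))$, whence
\[ \avol(D, r^*g) = (d+1)\int_0^{r\lmasy(D,g)} F_{(D,g)}(t/r)\,\mathrm{d}t = r\,\avol(D,g). \]
I do not expect a genuine difficulty here. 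The whole argument is bookkeeping of the scaling parameter, and the only point requiring care is keeping track of when a factor of $r$ versus a factor of $1/r$ enters — in the exponent of $\ab_x$, in the tree coordinate $t(\xi)$, and in the change of variables in the integrals — all of which is dictated by Section \ref{BS} and Proposition \ref{sag}.
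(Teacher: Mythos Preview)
Your proposal is correct. For part (1), your computation $|s|_{n\,r^*g}(x) = \bigl(|s|_{ng}(x^{1/r})\bigr)^r$ leading to $\lm(nD,n\,r^*g) = r\,\lm(nD,ng)$ is exactly the paper's argument. For part (2), your \emph{alternative} route --- computing $\mu_x(r^*g - t) = \mu_x(g - t/r)$ via the change of variable $\xi \mapsto \xi^{1/r}$ and $t(\xi^{1/r}) = t(\xi)/r$, hence $F_{(D,r^*g)}(t) = F_{(D,g)}(t/r)$, and then substituting in the integral formula of Theorem~\ref{vol1} --- is precisely what the paper does (the paper first disposes of the case where $D$ is not big, which you could add for completeness, though your primary approach does not need it).

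Your \emph{primary} approach to (2), through the filtration identity $\Fil^t = \Fil^{t/r}$ and the resulting equality $\adeg_+(nD,n\,r^*g) = r\,\adeg_+(nD,ng)$ for every $n$, is a genuinely shorter route: it works directly from the definition of $\avol$ as a $\limsup$ and does not require Theorem~\ref{vol1} at all (so your appeal to it for the existence of the limit is superfluous --- the identity at each $n$ already forces equality of the $\limsup$'s). The paper's approach, by contrast, illustrates the utility of the integral formula and the associated $\R$-Weil divisor $D_{\mu(g-t)}$; yours is more elementary but gives less structural information.
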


\begin{proof}
(1) For a non-zero element $f \in H^0(nD) \setminus \{0\}$, we have
\[ |f|_{nr^*g}(x) = \exp(-nrg(x^\frac{1}{r})+\log|f|_x) = \exp(r(-ng(x^\frac{1}{r})+\log|f|_{x^\frac{1}{r}})) = |f|_{ng}(x^\frac{1}{r}). \]
Hence we get $\lm(nD,nr^*g) = r\lm(nD,ng)$ for any positive integer $n$, which implies that $\lmasy(D,r^*g) = r\lmasy(D,g)$.

(2) If $D$ is not big, $\avol(D,g) = 0$ for any $D$-Green function $g$.
So we can assume that $D$ is big.
For $t < \lmasy(D,g)$ and $x \in X^{(1)}$, we have
\begin{align*}
\mu_x(r^*g-t) &= \inf_{\xi \in (\eta^\an,x^\an)} \frac{rg(\xi^\frac{1}{r})-t}{t(\xi)} \\
&= \inf_{\xi \in (\eta^\an,x^\an)} \frac{g(\xi^\frac{1}{r})-t/r}{t(\xi^\frac{1}{r})} = \mu_x(g-t/r),
\end{align*}
which implies that $D_{\mu(r^*g-t)} = D_{\mu(g-t/r)}$ and  hence $F_{(D,r^*g)}(t) = F_{(D,g)}(t/r)$.
Therefore, by Theorem \ref{vol1} and (1), we get
\begin{align*}
\avol(D,r^*g) &= (d+1) \int_0^{r\lmasy(D,g)} F_{(D,g)}(t/r)\ \mathrm{d}t \\
&= (d+1)r \int_0^{\lmasy(D,g)} F_{(D,g)}(t')\ \mathrm{d}t' \quad \left(t' = \frac{t}{r}\right) \\
&= r\avol(D,g).
\end{align*}
\end{proof}

Finally, we prove a simple criterion of the bigness of an adelic $\R$-Cartier divisor.

\begin{thm}[{c.f. \cite[Lemma 1.6]{boucksom2011okounkov} and \cite[Proposition 4.10]{chen2017sufficient}}]
Let $(D,g)$ be an adelic $\R$-Cartier divisor on $X$.
We assume that $D$ is big.
Then the following conditions are equivalent:
\begin{enumerate}
\item $(D,g)$ is big.
\item $\lmasy(D,g) > 0$.
\item For $\forall n \gg 0$, there is a strictly small section of $H^0(nD)$.
\end{enumerate}
\end{thm}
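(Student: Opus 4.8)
The plan is to prove the cycle of implications $(1)\Rightarrow(2)\Rightarrow(3)\Rightarrow(1)$, using the integral formula from Theorem \ref{vol1} and the basic relations between $\lmasy$, $\nu_{\max}$ and small sections established earlier.

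First, $(1)\Rightarrow(2)$ is essentially immediate from Proposition \ref{mbig}: if $(D,g)$ is big then $\lmasy(D,g)>0$ is one of the stated conclusions. (Alternatively, one sees it directly from Theorem \ref{vol1}: $\avol(D,g) = (d+1)\int_0^{\lmasy(D,g)} F_{(D,g)}(t)\,\mathrm{d}t > 0$ forces $\lmasy(D,g)>0$.)

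Next, $(2)\Rightarrow(3)$. Suppose $\lmasy(D,g) > 0$. Since $\lmasy(D,g) = \sup_{n\geq 1}\frac{1}{n}\lm(nD,ng)$ and the sequence $\frac{1}{n}\lm(nD,ng)$ converges to $\lmasy(D,g)$, for all $n\gg 0$ we have $\frac{1}{n}\lm(nD,ng) > 0$, i.e. $\lm(nD,ng) > 0$. By Lemma \ref{norm1}, the supremum defining $\lm(nD,ng)$ is attained, so $\Fil^t(H^0(nD),\nm_{ng}) \neq \{0\}$ for some $t>0$; any nonzero $s$ in this space satisfies $\|s\|_{ng} \leq e^{-t} < 1$, hence is a strictly small section of $H^0(nD)$.

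Finally, $(3)\Rightarrow(1)$. This is the main point. Suppose for all $n\gg 0$ there is a strictly small section $s_n \in H^0(nD)$, say $\|s_n\|_{ng} \leq e^{-\delta_n}$ with $\delta_n > 0$; the difficulty is that a priori $\delta_n$ could tend to $0$ too fast, so one must produce a uniform positive lower bound for $\lmasy(D,g)$. The idea is to fix one $n_0$ with a strictly small section $s_{n_0}$ of norm $\leq e^{-\delta}$ for some $\delta > 0$; then for the multiples $n = kn_0$ the section $s_{n_0}^k \in H^0(kn_0 D)$ satisfies $\|s_{n_0}^k\|_{kn_0 g} \leq \|s_{n_0}\|_{n_0 g}^k \leq e^{-k\delta}$ since $\bigoplus_n (H^0(nD),\nm_{ng})$ is a normed graded ring. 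Hence $\lm(kn_0 D, kn_0 g) \geq k\delta$, so $\lmasy(D,g) \geq \delta/n_0 > 0$. Then, since $D$ is big, the algebraic volume $\vol(D_{\mu(g-t)})$ is positive for $t$ slightly less than $\lmasy(D,g)$ — more precisely $D_{\mu(g-t)} \to D_{\mu(g)}$ (indeed $\geq D_{\mu(g)} - (\text{something tending to }0)$) and for small $t>0$ one has $\vol(D_{\mu(g-t)}) > 0$ by continuity of the algebraic volume together with the fact that $D$ big gives $\vol(D_{\mu(g-t)})>0$ for $t$ in a right-neighborhood of $0$; thus $F_{(D,g)}$ is strictly positive on a nonempty subinterval of $(0,\lmasy(D,g))$, and the integral formula of Theorem \ref{vol1} yields $\avol(D,g) > 0$. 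The step requiring the most care is checking that $F_{(D,g)}(t) = \vol(D_{\mu(g-t)})$ is genuinely positive for small $t > 0$: one must argue that $D_{\mu(g-t)}$ stays "close enough" to a big $\R$-Weil divisor (e.g. using that $(D,g)$ becomes $\mu$-finite and $\R$-linearly equivalent to something with a big associated divisor, or invoking continuity of $\vol(\cdot)$ on $\R$-Weil divisors from the cited result of Fulger–Kollár–Lehmann), and this is where the hypothesis that $D$ itself is big is actually used.
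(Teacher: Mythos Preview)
Your implications $(1)\Rightarrow(2)$, $(2)\Rightarrow(3)$, and the reduction of $(3)$ back to $(2)$ via fixing one $n_0$ and using submultiplicativity are all correct and essentially match the paper. The genuine gap is in the remaining step, namely deducing $\avol(D,g)>0$ from $\lmasy(D,g)>0$ and the bigness of $D$.

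You correctly isolate the difficulty---one must show $\vol(D_{\mu(g-t)})>0$ for some $t\in(0,\lmasy(D,g))$---but the justification you offer is not valid. The suggestions ``$D_{\mu(g-t)}\to D_{\mu(g)}$ plus continuity of $\vol$'' or ``$D$ big gives $\vol(D_{\mu(g-t)})>0$ for small $t>0$'' are circular or unfounded: we only know $D_{\mu(g)}\le D$, and a divisor dominated by a big one need not be big, nor is $D_{\mu(g)}$ a priori close to $D$ in any useful sense. Continuity of the algebraic volume on $\R$-Weil divisors does not help unless you already know $D_{\mu(g)}$ itself is big, which is exactly what is at stake. The paper resolves this by an explicit construction: choose an ample $A$ with $mD-A$ effective, note that the image $V_k$ of $H^0(kA)\hookrightarrow H^0(kmD)$ is a finitely generated graded algebra so that $\|v\|_{kmg}\le e^{-akm}$ for some constant $a$ and all $v\in V_k$, then pick $p$ large with a section $s_p\in H^0(pD)$ of norm $\le e^{-p\epsilon}$ (using $\lmasy>0$) so that twisting by $s_p^{\otimes k}$ forces the image of $H^0(kA)$ inside $H^0(k(m+p)D)$ to consist of small sections; this yields $\vol((m+p)D_{\mu(g)})\ge\vol(A)>0$. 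This is precisely where the hypothesis ``$D$ big'' enters, and it cannot be replaced by a soft continuity argument.
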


\begin{proof}
(1) $\Rightarrow$ (2)
It follows from Proposition \ref{mbig}.

(2) $\Rightarrow$ (1) 
It is sufficient to show that $D_{\mu(g)}$ is big.
In fact, if $D_{\mu(g)}$ is big, $D_{\mu(g-t)}$ is also big for $t < \lmasy(D,g)$ because $\lmasy(D,g-t) = \lmasy(D,g) - t$ for $t \in \R$.
Then we have
\[ \avol(D,g) = (d+1)\int_0^\infty \vol(D_{\mu(g-t)})\ \mathrm{d}t > 0 \]
by Theorem \ref{vol1}.
Now we prove that $D_{\mu(g)}$ is big.
Since $D$ is big, there is an ample divisor $A$ such that $mD-A$ is effective for some $m \in \Z_{>0}$.
Let $s \in H^0(mD-A) \setminus \{0\}$ be a non-zero section such that the map $H^0(kA) \rightarrow H^0(kmD)$ is given by multiplication by $s^{\otimes k}$ for all $k > 0$.
We denote the image of the map $H^0(kA) \rightarrow H^0(kmD)$ by $V_k$ and $V_0 = K$.
Since the graded ring $\bigoplus_{k=0}^\infty V_k$ is finitely generated, there is $a \in \R$ such that $\|v\|_{kmg} \leq e^{-akm}$ for all $v \in V_k$ and a sufficiently large $k>0$.
Let $\epsilon$ be a real number such that $0 < \epsilon < \lmasy(D,g)$.
Then we can find $p \in \Z_{>0}$ such that there is a non-zero element $s_p \in H^0(pD) \setminus \{0\}$ with $\|v_p\|_{pg} \leq e^{-p\epsilon}$ and $p > -am/\epsilon$ because $\epsilon < \lm(pD,pg)/p$ for a sufficiently large $p>0$.
The image $W_k$ of the composition of the map $H^0(kA) \rightarrow H^0(kmD) \rightarrow H^0(k(m+p)D)$ is given by multiplication by $(ss_p)^{\otimes k}$ for all $k>0$.
Hence for any $w \in W_k$, we can write $w = v \otimes (s_p)^{\otimes k}$ with $v \in V_k$ and we have
\[ \|w\|_{k(m+p)g} \leq \|v\|_{kmg} \ndot \|s_p\|_{pg}^k \leq e^{-akm}e^{-kp\epsilon} = e^{-k(am+p\epsilon)} \leq 1, \]
which implies that $W_k \subset H^0(k(m+p)D_{\mu(g)})$ for a sufficiently large $k > 0$.
Therefore we obtain that $\vol((m+p)D_{\mu(g)}) \geq \vol(A) > 0$, which is required.

(2) $\Rightarrow$ (3)
Since $\lmasy(D,g) > 0$, we have $\lm(nD,ng) > 0$ for a sufficiently large $n > 0$.
Hence there is a non-zero section $s \in H^0(nD) \setminus \{0\}$ such that $\|s\|_{ng} \leq e^{-\lm(nD,ng)} < 1$.

(3) $\Rightarrow$ (2)
Let $s$ be a strictly small section of $H^0(nD)$.
Then we have $\lm(nD,ng) \geq -\log\|s\|_{ng} > 0$.
Therefore we obtain that $\lmasy(D,g) \geq \lm(nD,ng)/n > 0$.
\end{proof}

\subsection{Continuity of $F_{(D,g)}(t)$}

Firstly, we will prove a very useful lemma:

\begin{lem}
\label{con}
Let $V$ be a convex cone and let $f:V \rightarrow \R$ be a concave function.
Namely, for any $v,v' \in V$ and $a,a' \geq 0$, 
\[ f(av+a'v') \geq af(v) + a'f(v'). \]
If $g(t) := v+tv'$ is a map from some open interval $(a,b) \subset \R$ to $V$ for fixed elements $v,v' \in V$, then $f \circ g$ is a concave function on $(a,b)$.
In particular, $f \circ g$ is continuous on $(a,b)$.
\end{lem}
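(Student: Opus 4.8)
The plan is to note that $g\colon t \mapsto v + tv'$ is affine in $t$, so that precomposing with $g$ converts the cone-concavity of $f$ into ordinary concavity of $f\circ g$ on $(a,b)$, and then to appeal to the classical fact that a finite concave function on an open real interval is continuous.

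First I would fix $t_1,t_2\in(a,b)$ and $\lambda\in[0,1]$ and record the identity
\[
g\bigl(\lambda t_1+(1-\lambda)t_2\bigr)=v+\bigl(\lambda t_1+(1-\lambda)t_2\bigr)v'=\lambda(v+t_1v')+(1-\lambda)(v+t_2v')=\lambda g(t_1)+(1-\lambda)g(t_2).
\]
Since $\lambda t_1+(1-\lambda)t_2$ again lies in $(a,b)$, each of $g(t_1)$, $g(t_2)$ and $g(\lambda t_1+(1-\lambda)t_2)$ lies in $V$ by the hypothesis on $g$; in particular the displayed equality is an equality inside $V$, consistent with $V$ being a convex cone. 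Applying the defining inequality of $f$ with the nonnegative scalars $a=\lambda$ and $a'=1-\lambda$ then yields
\[
f\bigl(g(\lambda t_1+(1-\lambda)t_2)\bigr)=f\bigl(\lambda g(t_1)+(1-\lambda)g(t_2)\bigr)\ge \lambda f(g(t_1))+(1-\lambda)f(g(t_2)),
\]
which is precisely the concavity of $f\circ g$ on $(a,b)$.

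Finally I would invoke the elementary fact that a real-valued concave function on an open interval of $\R$ is continuous (indeed locally Lipschitz on every compact subinterval) to obtain the last assertion. I do not anticipate any genuine obstacle here: the only points requiring a moment's care are checking that the affine combination $\lambda t_1+(1-\lambda)t_2$ stays in $(a,b)$ — hence that all relevant points stay in $V$, so that the cone-concavity inequality may legitimately be applied — and recalling the standard regularity of concave functions on intervals; neither is difficult.
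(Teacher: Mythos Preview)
Your argument is correct and is essentially identical to the paper's own proof: both use the affinity of $g$ to write $g(\lambda t_1+(1-\lambda)t_2)=\lambda g(t_1)+(1-\lambda)g(t_2)$ and then apply the defining inequality of $f$ with coefficients $\lambda,1-\lambda$. Your additional remarks about the convex combination staying in $(a,b)$ and the standard continuity of concave functions on open intervals only make explicit what the paper leaves implicit.
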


\begin{proof}
For any $t,t' \in (a,b)$ and $0 \leq \epsilon \leq 1$, we have
\begin{align*}
f \circ g(\epsilon t + (1-\epsilon)t') &= f(v+(\epsilon t + (1-\epsilon)t')v') \\
&= f(\epsilon (v+tv') + (1-\epsilon)(v+t'v')) \\
& \geq \epsilon f(v+tv') + (1-\epsilon) f(v+t'v') \\
&= \epsilon f \circ g(t) + (1-\epsilon)f \circ g(t').
\end{align*}
\end{proof}

For an adelic $\R$-Cartier divisor $(D,g)$, it follows immediately from the above lemma that $\mu_x(g-t)$ is a continuous concave function on $(-\infty,\lmasy(D,g))$ for every $x \in X^{(1)}$.
Hence we get the following proposition:

\begin{prop}
\label{D1}
Let $(D,g)$ be an adelic $\R$-Cartier divisor on $X$.
For any $t,t' < \lmasy(D,g)$ and $0 \leq \epsilon \leq 1$, we have
\[ D_{\mu(g-(\epsilon t + (1-\epsilon)t'))} \geq \epsilon D_{\mu(g-t)} + (1-\epsilon)D_{\mu(g-t')}. \]
\end{prop}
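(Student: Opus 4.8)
The plan is to pass to coefficients over prime divisors and then quote the concavity of $t\mapsto\mu_x(g-t)$ that was recorded just before the statement. First I would note that everything in sight is well defined: for $s<\lmasy(D,g)$ one has $s<\nu_{\max}(D,g)$ (by the inequality $\lmasy(D,g)\le\nu_{\max}(D,g)$) and $s<g(\eta^\an)$, so $(D,g-s)$ is $\mu$-finite, the associated $\R$-Weil divisor $D_{\mu(g-s)}=\sum_{x\in X^{(1)}}\mu_x(g-s)[x]$ makes sense, and each $\mu_x(g-s)$ is a real number (it is $-\infty$ only when $(g-s)(\eta^\an)<0$, i.e. $s>g(\eta^\an)$; here one also uses that $s\mapsto\mu_x(g-s)$ is non-increasing, so the set of $s$ with $(D,g-s)$ $\mu$-finite is an interval unbounded below). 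By the definition of $\ge$ for $\R$-Weil divisors, the asserted inequality is therefore equivalent to
\[
\mu_x\bigl(g-(\epsilon t+(1-\epsilon)t')\bigr)\ \ge\ \epsilon\,\mu_x(g-t)+(1-\epsilon)\,\mu_x(g-t')\qquad\text{for every }x\in X^{(1)}.
\]

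Next I would fix $x\in X^{(1)}$ and invoke the concavity observation preceding the proposition: since $\mu_x(g-s)=\inf_{\xi\in(\eta^\an,x^\an)}\frac{g(\xi)-s}{t(\xi)}$ is an infimum of affine functions of $s$ (each with slope $-1/t(\xi)<0$), it is concave on the locus where it is finite, which contains $(-\infty,\lmasy(D,g))$; equivalently, this is Lemma \ref{con} applied to the convex cone of $D$-Green functions $h$ with $h(\eta^\an)\ge0$, the concave functional $h\mapsto\mu_x(h)$, and the affine path $s\mapsto g-s$. As $t,t'<\lmasy(D,g)$ and $0\le\epsilon\le1$, the point $\epsilon t+(1-\epsilon)t'$ lies in $(-\infty,\lmasy(D,g))$, and the definition of concavity applied there gives exactly the displayed inequality. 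Summing over all $x\in X^{(1)}$ then finishes the proof.

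I do not expect a genuine obstacle here: the one substantive input, concavity of $t\mapsto\mu_x(g-t)$, is already in hand, and what remains is the bookkeeping above — confirming $\mu$-finiteness and finiteness of all the coefficients throughout $(-\infty,\lmasy(D,g))$, which is immediate from $\lmasy(D,g)\le\nu_{\max}(D,g)\le g(\eta^\an)$, together with the fact that an inequality between $\R$-Weil divisors is checked prime by prime.
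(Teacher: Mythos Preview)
Your proposal is correct and follows essentially the same route as the paper: the paper records, immediately before the proposition, that $t\mapsto\mu_x(g-t)$ is concave on $(-\infty,\lmasy(D,g))$ (as a consequence of Lemma~\ref{con}), and the proposition is then obtained by comparing coefficients prime by prime, exactly as you do. Your write-up is in fact a bit more careful, spelling out the $\mu$-finiteness on $(-\infty,\lmasy(D,g))$ via $\lmasy\le\nu_{\max}$ and giving the direct ``infimum of affine functions'' justification for concavity, but the argument is the same.
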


\begin{thm}
Let $(D,g)$ be an adelic $\R$-Cartier divisor on $X$ and $d = \dim X$.
Then $F_{(D,g)}(t)$ is a $d$-concave function on $(-\infty,\lmasy(D,g))$, that is, $F_{(D,g)}(t)^{1/d}$ is concave on $(-\infty,\lmasy(D,g))$.
In particular, $F_{(D,g)}(t)$ is continuous on $\R \setminus \{\lmasy(D,g)\}$.
\end{thm}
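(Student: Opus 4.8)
The plan is to establish the concavity of $F_{(D,g)}^{1/d}$ on $(-\infty,\lmasy(D,g))$ and then read off continuity, separating the cases where the underlying divisor $D$ is, or is not, big. If $D$ is \emph{not} big I claim $F_{(D,g)}\equiv 0$ on the whole interval, so there is nothing to do: we may assume $\lmasy(D,g)\in\R$ (else the interval is empty), and for $t<\lmasy(D,g)\le\nu_{\max}(D,g)$ the divisor $(D,g-t)$ is $\mu$-finite because the set of $s$ for which $(D,g-s)$ is $\mu$-finite is a down-set (if $s'<s$ then $g-s'\ge g-s$, so $\mu_x(g-s')\ge\mu_x(g-s)$ for all $x$, and one concludes with Proposition \ref{mu0}); then Proposition \ref{mu1}(2), applied to the $D$-Green function $g-t$, gives $D_{\mu(g-t)}\le D$, and the elementary monotonicity of the algebraic volume (if $A\le B$ then $H^0(\lfloor sA\rfloor)\subseteq H^0(\lfloor sB\rfloor)$ for all $s\ge 0$, hence $\vol(A)\le\vol(B)$) yields $F_{(D,g)}(t)=\vol(D_{\mu(g-t)})\le\vol(D)=0$.

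Now assume $D$ is big. The crucial preliminary step is that $F_{(D,g)}>0$ on all of $(-\infty,\lmasy(D,g))$: for $t$ in this interval one has $\lmasy(D,g-t)=\lmasy(D,g)-t>0$, so $(D,g-t)$ is big by the preceding criterion for bigness (using that $D$ is big), hence $D_{\mu(g-t)}$ is big by Proposition \ref{mbig}, i.e.\ $F_{(D,g)}(t)>0$. Next, fix $t,t'<\lmasy(D,g)$ and $\epsilon\in(0,1)$ and put $t''=\epsilon t+(1-\epsilon)t'$, which again lies in the interval; then $\epsilon D_{\mu(g-t)}$ and $(1-\epsilon)D_{\mu(g-t')}$ are big $\R$-Weil divisors, and chaining Proposition \ref{D1}, monotonicity of $\vol$, the $d$-concavity of $\vol$ on big divisors, and its $d$-homogeneity gives
\begin{align*}
F_{(D,g)}(t'')^{1/d}
&=\vol(D_{\mu(g-t'')})^{1/d}
 \ge \vol(\epsilon D_{\mu(g-t)}+(1-\epsilon)D_{\mu(g-t')})^{1/d}\\
&\ge \vol(\epsilon D_{\mu(g-t)})^{1/d}+\vol((1-\epsilon)D_{\mu(g-t')})^{1/d}\\
&= \epsilon\,F_{(D,g)}(t)^{1/d}+(1-\epsilon)\,F_{(D,g)}(t')^{1/d}.
\end{align*}
Together with the trivial cases $\epsilon\in\{0,1\}$ this is exactly the concavity of $F_{(D,g)}^{1/d}$ on $(-\infty,\lmasy(D,g))$.

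For continuity: a concave function on an open interval is automatically continuous, so $F_{(D,g)}^{1/d}$, hence $F_{(D,g)}$, is continuous on $(-\infty,\lmasy(D,g))$, while $F_{(D,g)}\equiv 0$ is continuous on $(\lmasy(D,g),+\infty)$; as $\R\setminus\{\lmasy(D,g)\}$ is the disjoint union of these two open intervals we are done. The step I expect to require the most care is the dichotomy in the big case --- namely that $F_{(D,g)}$ is \emph{strictly} positive on the entire interval $(-\infty,\lmasy(D,g))$ --- since only then can the $d$-concavity and $d$-homogeneity of the algebraic volume be invoked; this is where one must combine the shift identity $\lmasy(D,g-t)=\lmasy(D,g)-t$ with the bigness criterion applied to $(D,g-t)$.
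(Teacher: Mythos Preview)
Your proof is correct and follows essentially the same route as the paper: apply Proposition \ref{D1} and the $d$-concavity of the algebraic volume to the big divisors $D_{\mu(g-t)}$, then deduce continuity from concavity on an open interval. Your treatment is in fact more scrupulous than the paper's in two places: you explicitly dispose of the non-big case (where $F_{(D,g)}\equiv 0$ by $D_{\mu(g-t)}\le D$ and monotonicity of $\vol$), and you verify that $F_{(D,g)}(t)>0$ throughout $(-\infty,\lmasy(D,g))$ via the shift identity $\lmasy(D,g-t)=\lmasy(D,g)-t$ together with the bigness criterion and Proposition \ref{mbig}, which is exactly what is needed to legitimately invoke the log-concavity of $\vol$ on the big cone; the paper's proof assumes this implicitly.
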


\begin{proof}
By definition,
\[ F_{(D,g)}(t) = \vol(D_{\mu(g-t)}) \]
for $t < \lmasy(D,g)$.
Since the algebraic volume is $d$-concave on a big cone, for any $t,t' < \lmasy(D,g)$ and $0 \leq \epsilon \leq 1$, we have
\begin{align*}
F_{(D,g)}(\epsilon t + (1-\epsilon)t')^{\frac{1}{d}} &= \vol(D_{\mu(g - (\epsilon t + (1-\epsilon)t'))})^{\frac{1}{d}} \\
&\geq \vol(\epsilon D_{\mu(g-t)} + (1-\epsilon)D_{\mu(g-t')})^{\frac{1}{d}} \quad (\because \text{Proposition  \ref{D1}}) \\
&\geq \epsilon \ \vol(D_{\mu(g-t)})^{\frac{1}{d}} + (1-\epsilon)\vol(D_{\mu(g-t')})^{\frac{1}{d}} \\
&= \epsilon F_{(D,g)}(t)^{\frac{1}{d}} + (1-\epsilon)F_{(D,g)}(t')^{\frac{1}{d}}.
\end{align*}
\end{proof}

\begin{rem}
In general, we cannot extend $F_{(D,g)}$ to a continuous function on the whole $\R$.
For example, let $X = \Ps_K^1 = \mathrm{Proj}K[T_0,T_1]$, $z=T_1/T_0$, $D = \{T_0=0\}$ and $x_\infty = (0:1)$
Let $g = \log\max\{1,|z|\}$.
Then we have
\[ \mu_x(g-t)=
\begin{cases}
0 & (x \neq x_\infty) \\
1 & (x = x_\infty)
\end{cases}
\]
for $t < \lmasy(D,g) = 0$.
Hence we obtain that
\[ F_{(D,g)}(t) = 
\begin{cases}
1 & (t < 0) \\
0 & (t > 0).
\end{cases}
\]
\end{rem}

\subsection{Continuity of the arithmetic volume}

Firstly, we will prove the continuity of $\lmasy(D,g)$ for an adelic $\R$-Cartier divisor $(D,g)$.

\begin{lem}
\label{lam1}
Let $(D,g),(D',g')$ be adelic $\R$-Cartier divisors on $X$.
We have
\[ \lmasy(D+D',g+g') \geq \lmasy(D,g) + \lmasy(D',g'). \]
\end{lem}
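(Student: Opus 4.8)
The plan is to reduce the superadditivity of $\lmasy$ to the superadditivity of $\lm$ on graded pieces, using the multiplicativity structure already recorded in the excerpt. First I would recall that for an adelic $\R$-Cartier divisor $(D,g)$, the graded ring $\bigoplus_{n=0}^\infty (H^0(nD),\nm_{ng})$ is a normed graded ring over $K$, and that $\lmasy(D,g) = \sup_{n\geq 1}\frac1n\lm(nD,ng)$ by Fekete's lemma. The key elementary fact I need is a product inequality at the level of $\lm$: if $s\in H^0(nD)$ and $s'\in H^0(nD')$ are nonzero sections, then $ss'\in H^0(n(D+D'))$ and, because the norm $\nm_{n(g+g')}$ on $H^0(n(D+D'))$ is submultiplicative against the two factor norms (this is exactly the normed-graded-module/ring condition, and it is immediate from the definition of the product metric $\varphi+\varphi'$ and the supremum norm), one has $\|ss'\|_{n(g+g')}\leq \|s\|_{ng}\cdot\|s'\|_{ng'}$. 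Hence $-\log\|ss'\|_{n(g+g')}\geq -\log\|s\|_{ng} - \log\|s'\|_{ng'}$.

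Next I would exploit the description $\lm(nD,ng) = \max\{-\log\|s\|_{ng} : s\in H^0(nD)\setminus\{0\}\}$ (valid by Lemma \ref{norm1}, which lets one replace $\sup$ by $\max$ in the definition of $\lm$), together with the fact that $ss'\neq 0$ whenever $s,s'\neq 0$ since $K(X)$ is a field. Choosing $s$ and $s'$ that realize $\lm(nD,ng)$ and $\lm(nD',ng')$ respectively, the inequality above gives
\[
\lm\bigl(n(D+D'),n(g+g')\bigr)\;\geq\;-\log\|ss'\|_{n(g+g')}\;\geq\;\lm(nD,ng)+\lm(nD',ng').
\]
Dividing by $n$ and letting $n\to\infty$ (using $\lmasy(D,g)=\lim_n \frac1n\lm(nD,ng)$ for each of the three divisors) yields the claimed inequality. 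One should treat the degenerate cases separately: if either $\lmasy(D,g)=-\infty$ or $\lmasy(D',g')=-\infty$ — equivalently some $H^0(nD)=0$ for all $n$, or the relevant $\lm$ is $-\infty$ — the right-hand side is $-\infty$ and the inequality is trivial; so one may assume both are finite (or at least that $H^0(nD),H^0(nD')\neq 0$ for the large $n$ under consideration).

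The main obstacle, such as it is, is purely bookkeeping: one must be careful that the section $ss'$ of $\Ox_X(n(D+D'))$ obtained as a product really does have norm bounded by the product of the norms, which requires identifying $\Ox_X(nD)\otimes\Ox_X(nD')$ with $\Ox_X(n(D+D'))$ compatibly with the metrics $\varphi_{ng}+\varphi_{ng'}$ and $\varphi_{n(g+g')}$ — here one uses Proposition \ref{pg}(2), namely $(ag+a'g')$ is a Green function of $aD+a'D'$, and the definition of the product metric together with $\|\cdot\|=\sup_{x\in X^\an}|\cdot|(x)$. Everything else is the standard Fekete/superadditivity argument, and no genuine difficulty arises; the inequality direction ($\geq$) is exactly the "easy" direction one expects, the reverse inequality being false in general.
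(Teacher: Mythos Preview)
Your proof is correct and follows essentially the same route as the paper: exploit submultiplicativity of the supremum norms to obtain $\lm(n(D+D'),n(g+g'))\geq \lm(nD,ng)+\lm(nD',ng')$ and then pass to $\lmasy$. The paper's argument is a minor variant --- it picks two independent indices $n,n'$, forms $s^{\otimes n'}\otimes s'^{\otimes n}\in H^0(nn'(D+D'))$, and takes the supremum over $n$ and $n'$ separately rather than the limit along a single $n$; this is marginally more robust (it only uses $\lmasy=\sup_n\frac1n\lm$ and sidesteps any worry about $H^0(nD)$ vanishing for some $n$), but within the paper's conventions, where the Fekete limit is already asserted, the two arguments are equivalent.
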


\begin{proof}
For any integers $n,n' > 0$, there are non-zero elements $s \in H^0(nD) \setminus \{0\}$ and $s' \in H^0(n'D') \setminus \{0\}$ such that
\[ \|s\|_{ng} \leq e^{-\lm(nD,ng)} \ \text{and} \ \|s'\|_{n'g'} \leq e^{-\lm(n'D',n'g')}. \]
Since $s^{\otimes n'} \otimes s'^{\otimes n} \in H^0(nn'(D+D')) \setminus \{0\}$, we have
\[ \|s^{\otimes n'} \otimes s'^{\otimes n}\|_{nn'(g+g')} \leq (\|s\|_{ng})^{n'}(\|s'\|_{n'g'})^n \leq e^{-n'\lm(nD,ng)-n\lm(n'D',n'g')}, \]
which implies
\[ \frac{1}{nn'}\lm(nn'(D+D'),nn'(g+g')) \geq \frac{1}{n}\lm(nD,ng) + \frac{1}{n'}\lm(n'D',n'g'). \]
Since $\lmasy(D,g) \geq \lm(nD,ng)/n$, we have
\[ \lmasy(D+D',g+g') \geq \frac{1}{n}\lm(nD,ng) + \frac{1}{n'}\lm(n'D',n'g'). \]
Taking a supremum with respect to $n,n'$, we complete the proof.
\end{proof}

\begin{prop}
\label{lam2}
Let $\overline{D}=(D,g),\overline{D}'=(D',g')$ be adelic $\R$-Cartier divisors on $X$.
We assume $D$ is big.
Then $\lambda(t) := \lmasy(\overline{D}+t\overline{D}')$ is a real-valued function on some open interval $(a,b) \subset \R$ containing 0, and concave on $(a,b)$.
In particular $\lambda(t)$ is continuous on $(a,b)$.
\end{prop}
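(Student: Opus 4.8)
The plan is to recognize $\lmasy(\ndot)$ as a concave function on a suitable convex cone of adelic $\R$-Cartier divisors and then read off the statement by applying Lemma \ref{con} to the affine line $t \mapsto \overline{D} + t\overline{D}'$.

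First I would pin down the interval. Since $X$ is proper and normal, the geometric volume function is continuous, so $\{t \in \R : D + tD' \text{ is big}\}$ is open and contains $0$; let $(a,b)$ be the connected component of $0$ in this set. For $t \in (a,b)$, bigness of $D + tD'$ forces $H^0\!\big(n(D+tD')\big) \neq \{0\}$ for $n \gg 0$, hence $\lmasy(\overline{D}+t\overline{D}') = \sup_{n\ge 1}\tfrac1n\lm\!\big(n(D+tD'),n(g+tg')\big) > -\infty$; on the other hand the earlier inequality $\lmasy(E,h) \le h(\eta^\an)$ gives $\lmasy(\overline{D}+t\overline{D}') \le g(\eta^\an) + t\,g'(\eta^\an) < +\infty$. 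Thus $\lambda$ is a real-valued function on $(a,b)$, and the line $t \mapsto \overline{D}+t\overline{D}'$ maps $(a,b)$ into the set $\mathcal{C}$ of adelic $\R$-Cartier divisors with big underlying $\R$-Cartier divisor, which is a convex cone on which $\lmasy$ is real-valued.

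Next I would establish that $\lmasy$ is positively homogeneous of degree one: $\lmasy(a\overline{D}) = a\,\lmasy(\overline{D})$ for all $a \in \R_{>0}$ and all $\overline{D}=(D,g) \in \mathcal{C}$. The clean way is to consider the real-variable function $\ell_{\overline{D}}(r) := \lm\!\big(H^0(\lfloor rD\rfloor),\nm_{rg}\big)$ for $r \in \R_{>0}$: it is super-additive, since $\lfloor rD\rfloor + \lfloor r'D\rfloor \le \lfloor (r+r')D\rfloor$ (super-additivity of the round-down applied coefficientwise) and $\|s\cdot s'\|_{(r+r')g} \le \|s\|_{rg}\,\|s'\|_{r'g}$. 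The real-variable form of Fekete's lemma (valid here, $\ell_{\overline{D}}$ being finite and locally bounded below once $D$ is big) gives that $\ell_{\overline{D}}(r)/r$ converges as $r \to +\infty$ to $\sup_{r>0}\ell_{\overline{D}}(r)/r$. Evaluating the limit along $r = n \in \Z_{>0}$ identifies it with $\lmasy(\overline{D})$, and evaluating along $r = na$ gives $\lmasy(a\overline{D}) = \lim_n \ell_{\overline{D}}(na)/n = a\lim_n \ell_{\overline{D}}(na)/(na) = a\,\lmasy(\overline{D})$. Combining this with the super-additivity $\lmasy(\overline{E}+\overline{F}) \ge \lmasy(\overline{E}) + \lmasy(\overline{F})$ of Lemma \ref{lam1}, one gets $\lmasy(a\overline{E}+a'\overline{F}) \ge a\,\lmasy(\overline{E}) + a'\,\lmasy(\overline{F})$ for all $a,a' \ge 0$, i.e.\ $\lmasy$ is concave on $\mathcal{C}$ in the precise sense required by Lemma \ref{con}. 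Applying that lemma to $f = \lmasy$ and the line $t \mapsto \overline{D}+t\overline{D}'$ on $(a,b)$ yields that $\lambda(t) = \lmasy(\overline{D}+t\overline{D}')$ is concave on $(a,b)$; being a finite concave function on an open interval, it is automatically continuous there.

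The step I expect to be the main obstacle is exactly the homogeneity $\lmasy(a\overline{D}) = a\,\lmasy(\overline{D})$ for irrational $a$: the naive scaling of sections is obstructed by the round-downs $\lfloor rD\rfloor$, which for divisors with negative coefficients do not transform multiplicatively under $r \mapsto \lambda r$, so a coefficientwise comparison of $H^0(\lfloor naD\rfloor)$ with $H^0(\lfloor mD\rfloor)$ is awkward. Packaging all the round-downs into the single super-additive real-variable function $\ell_{\overline{D}}$ and invoking the continuous Fekete lemma is what makes this go through. Everything else — openness of the big cone, the bound $\lmasy(E,h)\le h(\eta^\an)$, the super-additivity of Lemma \ref{lam1}, and Lemma \ref{con} itself — is already in place.
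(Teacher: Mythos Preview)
Your proposal is correct and follows exactly the paper's route: the paper's proof of Proposition~\ref{lam2} is the two-line argument ``$D+tD'$ is big for $|t|\ll 1$, and concavity of $\lambda$ follows from Lemma~\ref{lam1} via Lemma~\ref{con}.'' The only difference is that you spell out the positive homogeneity $\lmasy(a\overline{D})=a\,\lmasy(\overline{D})$ needed to feed Lemma~\ref{con} as stated, whereas the paper simply asserts this identity (it is used verbatim, without proof, in the proof of Corollary~\ref{vol2}); your real-variable Fekete argument is a clean way to justify it, though one should note that the required local lower bound on $\ell_{\overline{D}}$ holds on any interval $[N,N+1]$ with $N$ large enough that $H^0(\lfloor ND\rfloor)\neq 0$, since then a fixed nonzero $s\in H^0(\lfloor ND\rfloor)\subset H^0(\lfloor rD\rfloor)$ gives $\ell_{\overline{D}}(r)\ge -\log\|s\|_{rg}\ge -\log\|s\|_{Ng}-\sup|g|$ for all $r\in[N,N+1]$.
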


\begin{proof}
Since $D$ is big, $D+tD'$ is big for $|t| \ll 1$, which implies that $\lambda(t)$ is definable on a sufficiently small open neighborhood of 0.
Moreover, using Lemma \ref{lam1}, we can prove the concavity of $\lambda(t)$ by Lemma \ref{con}.
\end{proof}

Next, we prove the continuity of the arithmetic volume $\avol(\ndot)$.
Let $(D,g),(D',g')$ be adelic $\R$-Cartier divisors on $X$ and we assume $D$ is big.
We set
\[ (D_\epsilon,g_\epsilon) := (D,g) + \epsilon(D',g'), \]
and
\begin{eqnarray*}
F_\epsilon(t) := \left \{
\begin{array}{ll}
\vol((D_\epsilon)_{\mu(g_\epsilon-t)}) & (t < \lmasy(D_\epsilon,g_\epsilon)) \\
0 &  (t > \lambda_{{\max}}^{\asy}(D_\epsilon,g_\epsilon)).
\end{array}
\right .
\end{eqnarray*}
We remark that this function is well-defined if $|\epsilon| \ll 1$ by Proposition \ref{lam2}.

\begin{prop}
The function $F_\epsilon(t)$ converges pointwise to $F_{(D,g)}(t)$ on $\R \setminus \{ \lmasy(D,g) \}$ as $|\epsilon| \rightarrow 0$.
More precisely, for any $t \in \R \setminus \{ \lmasy(D,g) \}$, $F_\epsilon(t)$ is continuous with respect to $\epsilon$ on a sufficiently small open neighborhood of $\epsilon = 0$.
\end{prop}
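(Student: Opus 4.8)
The plan is to split the argument into the formal case $t>\lmasy(D,g)$ and the substantial case $t<\lmasy(D,g)$.

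\emph{The case $t>\lmasy(D,g)$.} By Proposition \ref{lam2} the function $\lambda(\epsilon):=\lmasy(D_\epsilon,g_\epsilon)$ is continuous near $\epsilon=0$ with $\lambda(0)=\lmasy(D,g)<t$, so $\lambda(\epsilon)<t$ for $|\epsilon|$ small; hence $F_\epsilon(t)=0=F_{(D,g)}(t)$ for such $\epsilon$, and both the pointwise convergence and the continuity in $\epsilon$ are immediate.

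\emph{The case $t<\lmasy(D,g)$.} Here $F_{(D,g)}(t)=\vol(D_{\mu(g-t)})$. First I would use the continuity of $\lambda$ to fix $\epsilon_1>0$ with $t<\lmasy(D_\epsilon,g_\epsilon)$ for $|\epsilon|<\epsilon_1$; then, by the remark after the definition of $F_\epsilon$, the function $F_\epsilon$ is defined and $F_\epsilon(t)=\vol\big((D_\epsilon)_{\mu(g_\epsilon-t)}\big)$ with $(D_\epsilon,g_\epsilon-t)$ $\mu$-finite, for all $|\epsilon|<\epsilon_1$. The heart of the argument is to confine the supports of all these $\R$-Weil divisors to a single finite set. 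Choose $t<t_1<\lmasy(D,g)$. Since $\lmasy(D,g)\le\nu_{\max}(D,g)$ and $\mu_x$ is monotone in its argument (so that $\mu$-finiteness persists when the shift is decreased), $(D,g-t_1)$ is $\mu$-finite, hence there is a finite set $S_1\subset X^{(1)}$ with $\mu_x(g-t_1)\ge0$, that is, $g\ge t_1$ on $(\eta^\an,x^\an)$, for every $x\notin S_1$. Next, for $x\notin\mathrm{Supp}(D')$ the canonical Green function $g_{D'}^{\mathrm c}$ vanishes on $(\eta^\an,x^\an)$ (that segment reduces to $x$, and $\ord_x(D')=0$, so a local equation of $D'$ at $x$ is a unit and has $\ab_\xi$-absolute value $1$ by \eqref{red}), whence $g'=h_{(D',g')}^\an$ there; since $h_{(D',g')}^\an$ is a continuous function on the compact space $X^\an$ it is bounded, say $\big|h_{(D',g')}^\an\big|\le M$. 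Consequently, for $x\notin S_1\cup\mathrm{Supp}(D')$ and $|\epsilon|<(t_1-t)/M$ we get $g_\epsilon-t=(g-t)+\epsilon g'\ge(t_1-t)-|\epsilon|M>0$ on $(\eta^\an,x^\an)$, so $\mu_x(g_\epsilon-t)\ge0$; combined with $\mu_x(g_\epsilon-t)\le\ord_x(D_\epsilon)=\ord_x(D)+\epsilon\,\ord_x(D')$ from Proposition \ref{mu1}, this forces $\mu_x(g_\epsilon-t)=0$ for every $x\notin S:=S_1\cup\mathrm{Supp}(D)\cup\mathrm{Supp}(D')$, once $|\epsilon|$ is small enough.

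Finally I would pass to volumes. For each of the finitely many $x\in S$, the map $\epsilon\mapsto\mu_x(g_\epsilon-t)=\inf_{\xi\in(\eta^\an,x^\an)}\big(\tfrac{(g-t)(\xi)}{t(\xi)}+\epsilon\tfrac{g'(\xi)}{t(\xi)}\big)$ is an infimum of affine functions of $\epsilon$, hence concave, and is finite near $\epsilon=0$ because $(g_\epsilon-t)(\eta^\an)\to g(\eta^\an)-t>0$; so it is continuous near $0$ and tends to $\mu_x(g-t)$. Therefore, for $|\epsilon|$ small, $(D_\epsilon)_{\mu(g_\epsilon-t)}=\sum_{x\in S}\mu_x(g_\epsilon-t)[x]$ lies in the fixed finite-dimensional space $\bigoplus_{x\in S}\R[x]$, depends continuously on $\epsilon$, and converges coefficientwise to $D_{\mu(g-t)}$ as $\epsilon\to0$. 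Invoking the continuity of the algebraic volume function on $\bigoplus_{x\in S}\R[x]$ (the proposition in Section 2.2) then yields at once that $\epsilon\mapsto F_\epsilon(t)$ is continuous near $0$ and that $F_\epsilon(t)\to F_{(D,g)}(t)$. I expect the one genuinely delicate point to be the uniform control of the supports of $(D_\epsilon)_{\mu(g_\epsilon-t)}$; once that finite set $S$ is pinned down, the rest is a routine assembly of the continuity of $\lmasy$, the boundedness of the height function, the concavity of $\mu_x$, and the continuity of the algebraic volume.
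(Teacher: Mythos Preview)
Your proof is correct, but it takes a more hands-on route than the paper's. In the substantial case $t<\lmasy(D,g)$, the paper does not try to control the supports of the Weil divisors $(D_\epsilon)_{\mu(g_\epsilon-t)}$ at all. Instead, it observes (via Lemma~\ref{con}) that the concavity of $\epsilon\mapsto\mu_x(g_\epsilon-t)$ immediately yields the divisorial inequality
\[
(D_{\zeta\epsilon+(1-\zeta)\epsilon'})_{\mu(g_{\zeta\epsilon+(1-\zeta)\epsilon'}-t)}\ \ge\ \zeta\,(D_\epsilon)_{\mu(g_\epsilon-t)}+(1-\zeta)\,(D_{\epsilon'})_{\mu(g_{\epsilon'}-t)},
\]
and then feeds this into the $d$-concavity of the algebraic volume to conclude that $\epsilon\mapsto F_\epsilon(t)^{1/d}$ is concave on $(-\delta,\delta)$; continuity follows at once. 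So the paper uses the \emph{log-concavity} of $\vol$ where you use its \emph{continuity}, and thereby sidesteps entirely the uniform support bound that you (correctly) flag as the delicate step.

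What each approach buys: the paper's argument is shorter and makes no explicit use of the height function, the canonical Green function, or the reduction map. Your argument, on the other hand, actually proves something slightly stronger along the way---namely that the associated $\R$-Weil divisors $(D_\epsilon)_{\mu(g_\epsilon-t)}$ themselves converge coefficientwise to $D_{\mu(g-t)}$ inside a fixed finite-dimensional subspace of $\WDiv(X)_\R$---and it relies only on the continuity of $\vol$, not on its concavity. Both are valid; the paper's is the more economical packaging of the same underlying concavity of $\mu_x$.
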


\begin{proof}
We first assume $t > \lmasy(D,g)$.
By Proposition \ref{lam2}, there is $\delta > 0$ such that $\lmasy(D_\epsilon,g_\epsilon) < t$ if $|\epsilon| < \delta$.
Then $F_\epsilon(t) = F_{(D,g)}(t) = 0$, which is required.

Next we assume $t < \lmasy(D,g)$.
Similarly, there is $\delta > 0$ such that $\lmasy(D_\epsilon,g_\epsilon) > t$ if $|\epsilon| < \delta$.
Then $F_\epsilon(t)$ is $d$-concave with respect to $\epsilon$ on $(-\delta,\delta)$, where $d = \dim X$.
In fact, by Lemma \ref{con}, for any $\epsilon,\epsilon' \in (-\delta,\delta)$ and $0 \leq \zeta \leq 1$, we have
\[ (D_{\zeta\epsilon+(1-\zeta)\epsilon'})_{\mu(g_{\zeta\epsilon+(1-\zeta)\epsilon'}-t)} \geq \zeta(D_\epsilon)_{\mu(g_\epsilon-t)} + (1-\zeta)(D_{\epsilon'})_{\mu(g_{\epsilon'}-t)}. \]
Therefore $F_\epsilon(t)$ is $d$-concave with respect to $\epsilon$ on $(-\delta,\delta)$ because $F_\epsilon(t) = \vol((D_\epsilon)_{\mu(g_\epsilon-t)})$ and the algebraic volume is $d$-concave.
In particular, $F_\epsilon(t)$ is continuous with respect to $\epsilon$ on $(-\delta,\delta)$.
\end{proof}

Since $F_\epsilon(t)$ is uniformly bounded with respect to $\epsilon$ and
\[ \avol(D_\epsilon,g_\epsilon) = (d+1) \int_0^{+\infty} F_\epsilon(t) \ \mathrm{d}t \]
by Theorem \ref{vol1}, we get the continuity of the arithmetic volume by bounded convergence theorem:

\begin{thm}
\label{cvol}
Let $\overline{D}=(D,g),\overline{D}'=(D',g')$ be adelic $\R$-Cartier divisors on $X$.
We assume $D$ is big.
Then $\avol(\overline{D}+\epsilon \overline{D}')$ converges to $\avol(\overline{D})$ as $|\epsilon| \rightarrow 0$.
\end{thm}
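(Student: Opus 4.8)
The plan is to pass to the limit under the integral sign in the integral representation of Theorem \ref{vol1}, using the pointwise convergence from the preceding proposition together with an $\epsilon$-uniform integrable bound on $F_\epsilon$.

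First I would record that, since $D$ is big, $D_\epsilon$ is big for $|\epsilon|\ll 1$ (as in Proposition \ref{lam2}), so by Theorem \ref{vol1}
\[ \avol(\overline{D}+\epsilon\overline{D}') = (d+1)\int_0^\infty F_\epsilon(t)\,\mathrm{d}t, \qquad \avol(\overline{D}) = (d+1)\int_0^\infty F_{(D,g)}(t)\,\mathrm{d}t, \]
where $d=\dim X$. By the preceding proposition, $F_\epsilon(t)\to F_{(D,g)}(t)$ as $|\epsilon|\to 0$ for every $t\in\R\setminus\{\lmasy(D,g)\}$, and the single exceptional point is Lebesgue-null, so the integrands converge almost everywhere.

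Second I would construct a dominating function independent of $\epsilon$. For the support of $F_\epsilon$: by Proposition \ref{lam2} the map $\epsilon\mapsto\lmasy(D_\epsilon,g_\epsilon)$ is continuous near $0$, hence bounded above by some $T$ on a closed interval $|\epsilon|\le\delta_0$, so $F_\epsilon(t)=0$ whenever $t>T$ and $|\epsilon|\le\delta_0$. For the size of $F_\epsilon$: by Proposition \ref{mu1} we have $\mu_x(g_\epsilon-t)\le\ord_x(D_\epsilon)$ for all $x\in X^{(1)}$, so $(D_\epsilon)_{\mu(g_\epsilon-t)}\le D_\epsilon$ as $\R$-Weil divisors; since $H^0(\cdot)$ is monotone with respect to $\le$, so is the algebraic volume, whence $F_\epsilon(t)=\vol\big((D_\epsilon)_{\mu(g_\epsilon-t)}\big)\le\vol(D_\epsilon)$. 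By continuity of the algebraic volume, $\vol(D_\epsilon)\to\vol(D)$, so after shrinking $\delta_0$ we get $0\le F_\epsilon(t)\le \vol(D)+1$ for all $t$ and all $|\epsilon|\le\delta_0$. Thus $F_\epsilon$ is dominated by the integrable function equal to $\vol(D)+1$ on $[0,T]$ and to $0$ beyond, uniformly in $\epsilon$.

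Finally, the bounded (dominated) convergence theorem yields $\int_0^\infty F_\epsilon(t)\,\mathrm{d}t\to\int_0^\infty F_{(D,g)}(t)\,\mathrm{d}t$, i.e. $\avol(\overline{D}+\epsilon\overline{D}')\to\avol(\overline{D})$ as $|\epsilon|\to0$. The one genuinely nontrivial ingredient is the $\epsilon$-uniform control of the support, namely the upper bound on $\lmasy(D_\epsilon,g_\epsilon)$; this is exactly what the continuity (in fact concavity) of $\lambda(t)=\lmasy(\overline{D}+t\overline{D}')$ from Proposition \ref{lam2} provides, and once it is granted the remaining steps are routine applications of results already established.
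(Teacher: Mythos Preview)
Your proposal is correct and follows essentially the same approach as the paper: express $\avol$ via the integral formula of Theorem~\ref{vol1}, invoke the preceding proposition for pointwise convergence of $F_\epsilon$, and pass to the limit by bounded convergence. The paper simply asserts that $F_\epsilon$ is uniformly bounded in $\epsilon$, whereas you spell out the justification (support control via Proposition~\ref{lam2}, height control via $(D_\epsilon)_{\mu(g_\epsilon-t)}\le D_\epsilon$ from Proposition~\ref{mu1} and continuity of the algebraic volume); this added detail is sound and makes the argument more self-contained.
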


\subsection{Log concavity of the arithmetic volume}

Firstly, we will prove some inequalities:

\begin{lem}
\label{inq1}
Let $a,b,p$ and $\epsilon$ be real numbers such that $a,b \geq 0$, $p > 0$ and $0 < \epsilon < 1$.
Then we have the following inequality:
\[ (\epsilon a^p + (1-\epsilon)b^p)^{\frac{1}{p}} \geq a^\epsilon b^{1-\epsilon} \geq \min \{a,b\}. \]
\end{lem}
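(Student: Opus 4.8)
The statement to prove is Lemma~\ref{inq1}: for $a,b \geq 0$, $p>0$ and $0 < \epsilon < 1$,
\[ (\epsilon a^p + (1-\epsilon)b^p)^{1/p} \geq a^\epsilon b^{1-\epsilon} \geq \min\{a,b\}. \]

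The plan is to reduce both inequalities to classical facts about convexity and weighted means. For the left-hand inequality, I would first dispose of the degenerate cases $a=0$ or $b=0$, where the right side is $0$ and the inequality is obvious. Assuming $a,b>0$, the inequality $(\epsilon a^p + (1-\epsilon)b^p)^{1/p} \geq a^\epsilon b^{1-\epsilon}$ is exactly the statement that the weighted power mean of order $p$ dominates the weighted geometric mean (the weighted AM--GM inequality applied to $a^p$ and $b^p$: $\epsilon a^p + (1-\epsilon) b^p \geq (a^p)^\epsilon (b^p)^{1-\epsilon} = a^{p\epsilon} b^{p(1-\epsilon)}$, and then take $p$-th roots). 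Concretely I would prove $\epsilon u + (1-\epsilon) v \geq u^\epsilon v^{1-\epsilon}$ for $u,v>0$ by applying concavity of $\log$: $\log(\epsilon u + (1-\epsilon)v) \geq \epsilon \log u + (1-\epsilon)\log v = \log(u^\epsilon v^{1-\epsilon})$, then exponentiate; substituting $u = a^p$, $v = b^p$ and taking $p$-th roots (which is monotone on $\R_{\geq 0}$) gives the claim.

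For the right-hand inequality $a^\epsilon b^{1-\epsilon} \geq \min\{a,b\}$, I would again treat $a=0$ or $b=0$ trivially (then $\min\{a,b\}=0$), and for $a,b>0$ set $m = \min\{a,b\}$, so $a \geq m$ and $b \geq m$ with $m>0$. Then $a^\epsilon \geq m^\epsilon$ and $b^{1-\epsilon} \geq m^{1-\epsilon}$ since $x \mapsto x^c$ is nondecreasing on $\R_{>0}$ for $c \geq 0$; multiplying gives $a^\epsilon b^{1-\epsilon} \geq m^\epsilon m^{1-\epsilon} = m$.

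There is no real obstacle here; the lemma is elementary. The only point requiring a little care is keeping track of the degenerate cases $a = 0$ or $b = 0$ (and the convention $0^0$, which does not actually arise since $\epsilon$ and $1-\epsilon$ are strictly positive), and making sure the monotonicity of $t \mapsto t^{1/p}$ and $t \mapsto t^c$ on $\R_{\geq 0}$ is invoked cleanly. I would present the proof in the order: left inequality via $\log$-concavity and AM--GM, then right inequality via monotonicity of powers, each time splitting off the boundary cases first.
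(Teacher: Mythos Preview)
Your proposal is correct and essentially identical to the paper's proof: the paper also disposes of the case $ab=0$ first, then derives the left inequality from concavity of $\log$ applied to $x=a^p$, $y=b^p$, and declares the right inequality clear. Your write-up is slightly more explicit on the right-hand inequality (spelling out the monotonicity of $x\mapsto x^c$), but the argument is the same.
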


\begin{proof}
If $ab = 0$, the assertion is clear, so we assume that $a,b > 0$. 
Moreover, the inequality $a^\epsilon b^{1-\epsilon} \geq \min \{a,b\}$ is also clear.
Now, we will show the first inequality.
Since $\log x$ is concave on $(0,+\infty)$, we have
\[ \log (\epsilon x + (1-\epsilon)y) \geq \epsilon \log x + (1-\epsilon) \log y \]
for any $x,y > 0$.
Substituting $x$ for $a^p$ and $y$ for $b^p$,

\begin{align*}
\log (\epsilon a^p + (1-\epsilon)b^p) \geq \epsilon \log a^p + (1-\epsilon) \log b^p 
\Longleftrightarrow &\log (\epsilon a^p + (1-\epsilon)b^p)^\frac{1}{p} \geq \log a^\epsilon b^{1-\epsilon} \\
\Longleftrightarrow & (\epsilon a^p + (1-\epsilon)b^p)^{\frac{1}{p}} \geq a^\epsilon b^{1-\epsilon}
\end{align*}
\end{proof}

\begin{lem}
\label{inq2}
Let $V$ be a convex cone. Let $f : V \rightarrow (0,+\infty)$ be a non-negative $d$-homogeneous function for some $d > 0$, that is,
\[ f(av) = a^df(v) \]
for any $a > 0$ and $v \in V$.
Then the following conditions are equivalent:
\begin{enumerate}
\item $f$ is $d$-concave, that is,
\[ f(\epsilon v + (1-\epsilon)v')^\frac{1}{d} \geq \epsilon f(v)^\frac{1}{d} + (1-\epsilon) f(v')^\frac{1}{d} \]
for every $v,v' \in V$ and $0 \leq \epsilon \leq 1$.
\item $f(\epsilon v + (1-\epsilon)v') \geq \min \{ f(v), f(v') \}$ for every $v,v' \in V$ and $0 \leq \epsilon \leq 1$.
\end{enumerate}
\end{lem}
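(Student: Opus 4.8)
The plan is to prove the two implications separately. The implication (1) $\Rightarrow$ (2) is immediate and needs no homogeneity: for $v,v'\in V$ and $0\le\epsilon\le1$, a convex combination of the two nonnegative reals $f(v)^{1/d}$ and $f(v')^{1/d}$ is always at least their minimum, so by (1) one gets $f(\epsilon v+(1-\epsilon)v')^{1/d}\ge\epsilon f(v)^{1/d}+(1-\epsilon)f(v')^{1/d}\ge\min\{f(v)^{1/d},f(v')^{1/d}\}$, and raising to the $d$-th power yields (2).

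The substantive direction is (2) $\Rightarrow$ (1), and here the $d$-homogeneity is used through a normalization trick. Given $v,v'\in V$ and $0\le\epsilon\le1$, I would set $a:=f(v)^{1/d}>0$ and $b:=f(v')^{1/d}>0$; since $V$ is a cone, the vectors $v/a$ and $v'/b$ lie in $V$, and $d$-homogeneity gives $f(v/a)=a^{-d}f(v)=1$ and likewise $f(v'/b)=1$. The key step is to rewrite $\epsilon v+(1-\epsilon)v'=(s+t)\bigl(\tfrac{s}{s+t}\tfrac{v}{a}+\tfrac{t}{s+t}\tfrac{v'}{b}\bigr)$ with $s:=\epsilon a$, $t:=(1-\epsilon)b$ and $s+t>0$, so that the parenthesized vector is a genuine convex combination of $v/a$ and $v'/b$. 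Applying hypothesis (2) to it gives $f$-value $\ge\min\{1,1\}=1$, and applying $d$-homogeneity once more yields $f(\epsilon v+(1-\epsilon)v')=(s+t)^d\,f\bigl(\tfrac{s}{s+t}\tfrac{v}{a}+\tfrac{t}{s+t}\tfrac{v'}{b}\bigr)\ge(s+t)^d=\bigl(\epsilon f(v)^{1/d}+(1-\epsilon)f(v')^{1/d}\bigr)^d$; taking $d$-th roots gives (1).

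The part requiring the most attention is checking that the normalization is legitimate: $V$ being a convex cone is exactly what keeps $v/a$ and $v'/b$ inside $V$, and $s+t=\epsilon a+(1-\epsilon)b>0$ (using $a,b>0$ together with the fact that $\epsilon$ and $1-\epsilon$ cannot both vanish) is what makes the reweighting by $s/(s+t)$ well-defined; the degenerate cases $\epsilon\in\{0,1\}$ simply give weight $0$ or $1$ and are harmless. Beyond spotting this rescaling I do not expect a genuine obstacle; if one preferred a more symmetric-looking closing estimate, Lemma \ref{inq1} with $p=d$ could be invoked, but it is not needed here since the final quantity is literally $\bigl(\epsilon f(v)^{1/d}+(1-\epsilon)f(v')^{1/d}\bigr)^d$.
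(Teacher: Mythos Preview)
Your proof is correct and follows essentially the same normalization idea as the paper: rescale $v$ and $v'$ so that both have $f$-value $1$, apply the quasi-concavity hypothesis (2) to the resulting convex combination, and then use $d$-homogeneity to recover the desired inequality. The only cosmetic difference is that the paper first establishes the super-additive form $f(v+v')^{1/d}\ge f(v)^{1/d}+f(v')^{1/d}$ (by choosing the specific weight $\epsilon=f(v)^{1/d}/(f(v)^{1/d}+f(v')^{1/d})$) and then deduces (1) from homogeneity, whereas you prove (1) directly for arbitrary $\epsilon$; the underlying argument is the same.
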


\begin{proof}
Firstly, we assume $(1)$ and we can assume $\min \{f(v),f(v')\} = f(v)$.
Then we have 
\[ f(\epsilon v + (1-\epsilon)v')^\frac{1}{d} \geq \epsilon f(v)^\frac{1}{d} + (1-\epsilon) f(v')^\frac{1}{d} \geq f(v)^\frac{1}{d}. \]
Raising both sides to $d$-th power, we have
\[ f(\epsilon v + (1-\epsilon)v') \geq f(v) = \min \{ f(v), f(v') \}. \]

Next we assume $(2)$.
If we set 
\[ w = f(v)^{-\frac{1}{d}}v,\ w'= f(v')^{-\frac{1}{d}}v',\ \epsilon = \frac{f(v)^\frac{1}{d}}{f(v)^\frac{1}{d} + f(v')^\frac{1}{d}}, \]
we have
\begin{align*}
\epsilon w + (1-\epsilon) w' &= \frac{1}{f(v)^\frac{1}{d} + f(v')^\frac{1}{d}}(v+v'), \\
\min \{f(w),f(w')\} &= 1.
\end{align*}
By the inequality $(2)$ for $w,w'$ and $\epsilon$, we have
\begin{align*}
(f(v)^\frac{1}{d} + f(v')^\frac{1}{d})^{-d}f(v+v') \geq 1 
\Longleftrightarrow & f(v+v') \geq (f(v)^\frac{1}{d} + f(v')^\frac{1}{d})^d \\
\Longleftrightarrow & f(v+v')^\frac{1}{d} \geq f(v)^\frac{1}{d} + f(v')^\frac{1}{d},
\end{align*}
which implies the inequality $(1)$ because $f$ is $d$-homogeneous.
\end{proof}

Moreover, we will use the following inequality so called ``Pr{\'e}kopa-Leindler inequality''.
It was proved by Pr{\'e}kopa \cite{1971logarithmic} \cite{prekopa1973logarithmic} and Leindler \cite{leindler1972certain} (for detail, see \cite{gardner2002brunn}).

\begin{thm}[Pr{\'e}kopa-Leindler inequality]
\label{inq3}
Let $0<\epsilon<1$ and $f,g,h:\R^n \rightarrow [0,+\infty)$ be measurable functions.
We assume
\[ h(\epsilon x + (1-\epsilon)y) \geq f(x)^\epsilon g(y)^{1-\epsilon} \]
for any $x,y \in \R^n$.
Then we have $||h||_1 \geq ||f||_1^\epsilon ||g||_1^{1-\epsilon}$, that is,
\[ \int_{\R^n} h \ \mathrm{d}\nu \geq \left( \int_{\R^n} f \ \mathrm{d}\nu \right)^\epsilon \left( \int_{\R^n} g \ \mathrm{d}\nu \right)^{1-\epsilon} \]
where $\nu$ is the Lubesgue measure on $\R^n$.
\end{thm}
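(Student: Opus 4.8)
The plan is to prove the Pr\'ekopa--Leindler inequality by \textbf{induction on the dimension $n$}: the case $n=1$ carries the analytic content, while the inductive step is a slicing argument using Fubini's theorem. Before the induction I would dispose of the degenerate cases and make harmless reductions. If $\int_{\R^n} f\,\mathrm{d}\nu = 0$ or $\int_{\R^n} g\,\mathrm{d}\nu = 0$ the right-hand side vanishes; if $\int f = +\infty$ and $\int g > 0$ (or symmetrically), replacing $f,g$ by $f\cdot\mathbf{1}_{[-R,R]^n}$ and $g\cdot\mathbf{1}_{[-R,R]^n}$ only decreases the right-hand side of the hypothesis, so the hypothesis persists, and the finite case together with $R\to\infty$ forces $\int h = +\infty$. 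Thus it suffices to treat $0 < \int f,\int g < +\infty$; truncating also from above via $f\mapsto\min\{f,M\}$, $g\mapsto\min\{g,M\}$ and letting $M\to\infty$, one may moreover assume $f,g$ bounded with compact support.

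For $n=1$ I would run a \emph{transport} argument. Put $a = \int_\R f$ and $b = \int_\R g$; the generalised inverse of the non-decreasing continuous map $x\mapsto a^{-1}\int_{-\infty}^x f\,\mathrm{d}\nu$ is a non-decreasing map $u:(0,1)\to\R$ with $u'(s) = a/f(u(s))$ for a.e.\ $s$, and one defines $v$ analogously from $g$. Setting $w(s) := \epsilon u(s) + (1-\epsilon)v(s)$, the map $w$ is non-decreasing and, by Lemma \ref{inq1} (with $p=1$), $w'(s) = \epsilon u'(s) + (1-\epsilon)v'(s) \geq u'(s)^\epsilon v'(s)^{1-\epsilon}$ a.e. Combining the monotone change-of-variables inequality with the hypothesis evaluated at $x=u(s)$, $y=v(s)$ (note $w(s) = \epsilon u(s) + (1-\epsilon)v(s)$) gives
\[ \int_\R h\,\mathrm{d}\nu \;\geq\; \int_0^1 h(w(s))\,w'(s)\,\mathrm{d}s \;\geq\; \int_0^1 f(u(s))^\epsilon g(v(s))^{1-\epsilon}\cdot a^\epsilon b^{1-\epsilon}\, f(u(s))^{-\epsilon} g(v(s))^{-(1-\epsilon)}\,\mathrm{d}s \;=\; a^\epsilon b^{1-\epsilon}, \]
which is the one-dimensional inequality. (Alternatively one can normalise $\sup f = \sup g = 1$ and argue with super-level sets: $\{h\geq r\}\supseteq \epsilon\{f\geq r\}+(1-\epsilon)\{g\geq r\}$, the one-dimensional Brunn--Minkowski inequality $|A+B|\geq|A|+|B|$, and the layer-cake formula yield $\int h \geq \epsilon\int f + (1-\epsilon)\int g \geq (\int f)^\epsilon(\int g)^{1-\epsilon}$ by arithmetic--geometric mean.)

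For the inductive step, write $\R^n = \R^{n-1}\times\R$, $x = (x',x_n)$. For fixed $x_n,y_n\in\R$ and $z_n := \epsilon x_n + (1-\epsilon)y_n$, the $(n-1)$-dimensional slices $x'\mapsto f(x',x_n)$, $y'\mapsto g(y',y_n)$, $z'\mapsto h(z',z_n)$ satisfy the hypothesis in dimension $n-1$, so the induction hypothesis gives $H(z_n)\geq F(x_n)^\epsilon G(y_n)^{1-\epsilon}$, where $F(x_n) := \int_{\R^{n-1}} f(\cdot,x_n)\,\mathrm{d}\nu$ and $G,H$ are defined likewise; these are measurable by Tonelli's theorem. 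Applying the one-dimensional case to $F,G,H$ and then Fubini's theorem to identify $\int_\R F\,\mathrm{d}\nu$ with $\int_{\R^n} f\,\mathrm{d}\nu$ (and likewise for $G,H$) completes the induction. The step I expect to be the main obstacle is the \textbf{one-dimensional base case} together with the measure-theoretic bookkeeping it requires: ensuring the transport maps $u,v$ are well defined with the stated a.e.\ derivatives, justifying the monotone change of variables, and carrying out the preliminary reductions so that these constructions are legitimate. Once $n=1$ is secured, the passage to arbitrary $n$ is a formal Fubini argument presenting no essential difficulty.
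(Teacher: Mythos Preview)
The paper does not give its own proof of this theorem: it merely states the Pr\'ekopa--Leindler inequality and cites Pr\'ekopa, Leindler, and Gardner's survey for the proof. Your proposal is exactly the standard argument one finds in those references (in particular in Gardner): induction on $n$, with the one-dimensional case handled by a monotone transport (or, alternatively, by the super-level-set/layer-cake argument you sketch), and the higher-dimensional case by slicing and Fubini. The argument is correct and the points you flag as needing care---the preliminary reductions to bounded compactly supported $f,g$, the a.e.\ differentiability and change-of-variables for the monotone maps $u,v,w$---are indeed the only places requiring attention, and are routine once $f,g$ are bounded with compact support. So your approach is sound and matches what the cited literature does; there is nothing further to compare against in the paper itself.
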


Now, we start to prove the log concavity of $\avol(\ndot)$.

\begin{thm}
\label{cvol2}
The arithmetic volume $\avol(\ndot)$ is $(d+1)$-concave for $d = \dim X$.
More precisely, for any big adelic $\R$-Cartier divisors $(D,g),(D',g')$, we have
\[ \avol(D+D',g+g')^\frac{1}{d+1} \geq \avol(D,g)^\frac{1}{d+1} + \avol(D',g')^\frac{1}{d+1}. \]
\end{thm}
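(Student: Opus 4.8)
The plan is to combine the integral formula of Theorem \ref{vol1} with the Prékopa–Leindler inequality (Theorem \ref{inq3}), in the same spirit as the classical proof of log concavity of the arithmetic volume. By Theorem \ref{vol1}, for a big adelic $\R$-Cartier divisor we have
\[
\avol(D,g) = (d+1)\int_0^{+\infty} F_{(D,g)}(t)\ \mathrm{d}t = (d+1)\int_{\R} F_{(D,g)}(t)\ \mathrm{d}t,
\]
the second equality because $F_{(D,g)}$ vanishes on $(-\infty,0)$ by homogeneity... actually $F_{(D,g)}(t)=\vol(D_{\mu(g-t)})$ for $t<\lmasy(D,g)$ and we only integrate over $t>0$; so more precisely I will write $\avol(D,g) = (d+1)\int_0^\infty F_{(D,g)}(t)\,\mathrm{d}t$ and keep the integrals on $[0,\infty)$ throughout. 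The idea is to apply Prékopa–Leindler in dimension $n=1$ to suitable rescalings of the three functions $F_{(D,g)}$, $F_{(D',g')}$ and $F_{(D+D',g+g')}$.

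First I would fix the scaling. Write $v = \avol(D,g)^{1/(d+1)}$, $v' = \avol(D',g')^{1/(d+1)}$, and set $\epsilon = v/(v+v')$; the goal is $\avol(D+D',g+g')^{1/(d+1)} \geq v+v'$. Following the usual trick, define $f(t) = F_{(D,g)}(t/\epsilon)$... no: I would rather rescale so that the two "masses" become equal. Concretely, put $\tilde F(t) = F_{(D,g)}(\epsilon^{-1} \cdot \text{(something)})$ — the cleanest route is: for $a>0$ let $F^{[a]}(t) := a^d F_{(D,g)}(at)$, which by the $d$-homogeneity of the algebraic volume and the identity $D_{\mu((ag)-(at))} = a\,D_{\mu(g-t)}$ equals $F_{(aD,ag)}(t)$, hence $\int_0^\infty F^{[a]} = a^{-1}\int_0^\infty F_{(D,g)} \cdot a = \dots$; I will compute the normalizing constant so that after rescaling $(D,g)$ by $\epsilon^{-1}$ and $(D',g')$ by $(1-\epsilon)^{-1}$ the two integrals agree, which uses $\avol(aD,ag)=a^{d+1}\avol(D,g)$ from Corollary \ref{vol2}.

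The key pointwise inequality I need is
\[
F_{(D+D',g+g')}(\epsilon t + (1-\epsilon)t') \geq \min\bigl\{F_{(D,g)}(t),\, F_{(D',g')}(t')\bigr\}^{?}
\]
— and this is where the real work lies. The input is: for $t<\lmasy(D,g)$, $t'<\lmasy(D',g')$, Lemma \ref{lam1} gives $\epsilon t + (1-\epsilon)t' < \lmasy(D+D',g+g')$, and by the concavity/superadditivity of $\mu_x$ (Proposition \ref{D1} applied to $(D+D',g+g')$, together with $\mu_x(g+g') \geq \mu_x(g)+\mu_x(g')$ which follows from the definition of $\mu_x$ as an infimum) one gets
\[
(D+D')_{\mu((g+g')-(\epsilon t+(1-\epsilon)t'))} \;\geq\; \epsilon\, D_{\mu(g-t)} + (1-\epsilon)\, D'_{\mu(g'-t')}
\]
as $\R$-Weil divisors. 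Then taking volumes and using the $d$-concavity of the algebraic volume on big divisors (Prop. in §2.2), plus Lemma \ref{inq1}/\ref{inq2}, yields $F_{(D+D',g+g')}(\epsilon t+(1-\epsilon)t')^{1/d} \geq \epsilon F_{(D,g)}(t)^{1/d} + (1-\epsilon) F_{(D',g')}(t')^{1/d}$. I must be careful about the regime where one of the volumes vanishes (the divisor is not big), handled by the "$\min$" formulation and a limiting/boundary argument, and about the values $t\geq\lmasy$, where $F=0$ so the inequality is trivial.

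Finally, I would feed this into Prékopa–Leindler. Set, with the scalings chosen above, $h(s) = \bigl(\text{const}\bigr)\,F_{(D+D',g+g')}(s)^{1/d}$, $f(s)=\bigl(\text{const}\bigr)\,F_{(D,g)}(\epsilon^{-1}s)^{1/d}$, $g(s) = \bigl(\text{const}\bigr)\,F_{(D',g')}((1-\epsilon)^{-1}s)^{1/d}$ — rather, I will arrange the arguments so the hypothesis $h(\epsilon x + (1-\epsilon)y) \geq f(x)^\epsilon g(y)^{1-\epsilon}$ of Theorem \ref{inq3} reads exactly as the pointwise inequality of the previous paragraph combined with $a^{1/d}b^{1/d}\geq$ the geometric mean via Lemma \ref{inq1}. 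Then $\|h\|_1 \geq \|f\|_1^\epsilon\|g\|_1^{1-\epsilon}$ translates, after unwinding the constants using Corollary \ref{vol2} and the substitution in Theorem \ref{vol1}, into $\avol(D+D',g+g')^{1/(d+1)} \geq \avol(D,g)^{1/(d+1)} + \avol(D',g')^{1/(d+1)}$, as desired. The main obstacle is getting the exponents and the normalizing constants in the Prékopa–Leindler application exactly right so that the $1/d$ from the algebraic volume and the extra integration dimension combine to $1/(d+1)$; the geometric content (superadditivity of $\mu_x$ and $\lmasy$, concavity of $\vol$) is already packaged in the earlier results.
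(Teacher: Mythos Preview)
Your strategy is the same as the paper's: express $\avol$ via the integral formula of Theorem~\ref{vol1}, establish a pointwise inequality between the integrands using superadditivity of $\mu_x$ and $d$-concavity of the algebraic volume, and then apply the Pr\'ekopa--Leindler inequality (Theorem~\ref{inq3}) in dimension $1$. So the approach is correct.

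The paper's organization, however, is noticeably cleaner and avoids exactly the bookkeeping you are struggling with. Instead of working with $(D+D',g+g')$ and trying to rescale the inputs by $\epsilon^{-1}$ and $(1-\epsilon)^{-1}$, the paper works with the convex combination $(D_\epsilon,g_\epsilon):=\epsilon(D,g)+(1-\epsilon)(D',g')$ for an \emph{arbitrary} $\epsilon\in(0,1)$. The point is that then $g_\epsilon-(\epsilon x+(1-\epsilon)y)=\epsilon(g-x)+(1-\epsilon)(g'-y)$ on the nose, so the concavity of $\mu_z$ gives directly $(D_\epsilon)_{\mu(g_\epsilon-(\epsilon x+(1-\epsilon)y))}\ge \epsilon D_{\mu(g-x)}+(1-\epsilon)D'_{\mu(g'-y)}$; combined with $d$-concavity of $\vol$ and Lemma~\ref{inq1} this is precisely the hypothesis of Pr\'ekopa--Leindler for the three functions $\Theta_{(D_\epsilon,g_\epsilon)},\Theta_{(D,g)},\Theta_{(D',g')}$. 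The conclusion is $\avol(D_\epsilon,g_\epsilon)\ge\min\{\avol(D,g),\avol(D',g')\}$, and then Lemma~\ref{inq2} together with $(d+1)$-homogeneity (Corollary~\ref{vol2}) upgrades this ``$\min$'' inequality to the full log-concavity. In other words, Lemma~\ref{inq2} \emph{is} the rescaling trick you were attempting by hand with the particular $\epsilon=v/(v+v')$; using it as a black box eliminates all the constants you were worried about. Note also that your displayed inequality $(D+D')_{\mu((g+g')-(\epsilon t+(1-\epsilon)t'))}\ge \epsilon D_{\mu(g-t)}+(1-\epsilon)D'_{\mu(g'-t')}$ is not correct as written (the weights on the two sides do not match unless you first rescale the divisors), which is another symptom of the same issue.
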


\begin{proof}
For $0 < \epsilon < 1$, we set
\[ (D_\epsilon,g_\epsilon) := \epsilon(D,g) + (1-\epsilon)(D',g'), \]
and
\begin{eqnarray*}
\Theta_{(D,g)}(t) := \left \{
\begin{array}{ll}
(d+1)\vol(D_{\mu(g-t)}) & (0 \leq t < \lmasy(D,g)) \\
0 &  (\text{otherwise}).
\end{array}
\right .
\end{eqnarray*}
Then, we have
\begin{equation}
\label{ve1}
\avol(D,g) = ||\Theta_{(D,g)}||_1,\ \avol(D',g') = ||\Theta_{(D',g')}||_1,\ \avol(D_\epsilon,g_\epsilon) = ||\Theta_{(D_\epsilon,g_\epsilon)}||_1
\end{equation}
by Theorem \ref{vol1}.
We claim that
\begin{equation}
\label{ve2}
\Theta_{(D_\epsilon,g_\epsilon)}(\epsilon x + (1-\epsilon)y) \geq \Theta_{(D,g)}(x)^\epsilon \Theta_{(D',g')}(y)^{1-\epsilon} \quad \text{for any} \ x,y \in \R.  
\end{equation}
In fact, if $x<0, \lmasy(D,g) \leq x, y<0$ or $\lmasy(D',g') \leq y$, we have $\Theta_{(D,g)}(x)=0$ or $\Theta_{(D',g')}(y)=0$, so the inequality (\ref{ve2}) is clear in this case.
And if $0 \leq x < \lmasy(D,g)$ and $0 \leq y < \lmasy(D',g')$, we have
\[ \mu_z(g_\epsilon -(\epsilon x + (1-\epsilon)y)) \geq \epsilon \mu_z(g-x) + (1-\epsilon)\mu_z(g'-y) \]
for any $z \in X^{(1)}$, which implies that
\[ (D_\epsilon)_{\mu(g_\epsilon - (\epsilon x + (1-\epsilon)y))} \geq \epsilon D_{\mu(g-x)} + (1-\epsilon)D'_{\mu(g'-y)}. \]
Since the algebraic volume is $d$-concave, we obtain
\[ \vol((D_\epsilon)_{\mu(g_\epsilon - (\epsilon x + (1-\epsilon)y))})^\frac{1}{d} \geq \epsilon \ \vol(D_{\mu(g-x)})^\frac{1}{d} + (1-\epsilon)\vol(D'_{\mu(g'-y)})^\frac{1}{d}. \]
By Lemma \ref{inq1}, we get
\[ \vol((D_\epsilon)_{\mu(g_\epsilon - (\epsilon x + (1-\epsilon)y))}) \geq \vol(D_{\mu(g-x)})^\epsilon \vol(D'_{\mu(g'-y)})^{1-\epsilon}, \]
which is equivalent to the inequality (\ref{ve2}).
Therefore by Pr{\'e}kopa-Leindler inequality, we have $||\Theta_{(D_\epsilon,g_\epsilon)}||_1 \geq ||\Theta_{(D,g)}||_1^\epsilon ||\Theta_{(D',g')}||_1^{1-\epsilon}$.
By Lemma \ref{inq1} again, we have $||\Theta_{(D_\epsilon,g_\epsilon)}||_1 \geq \min \{ ||\Theta_{(D,g)}||_1, ||\Theta_{(D',g')}||_1 \}$, which is the inequality
\[ \avol(D_\epsilon,g_\epsilon) \geq \min \{ \avol(D,g), \avol(D',g') \} \]
by (\ref{ve1}).
Since the arithmetic volume is $(d+1)$-homogeneous by Corollary \ref{vol2}, we have
\[ \avol(D_\epsilon,g_\epsilon)^\frac{1}{d+1} \geq \epsilon \ \avol(D,g)^\frac{1}{d+1} + (1-\epsilon) \avol(D',g')^\frac{1}{d+1}, \]
by Lemma \ref{inq2}, which completes the proof.
\end{proof}


\begin{thebibliography}{99}

\bibitem{arakelov1974intersection}
S. J. Arakelov.
\newblock Intersection theory of divisors on an arithmetic surface.
\newblock {\em Mathematics of the USSR-Izvestiya}, Vol.~8, No.~6, pp.
  1167--1180, 1974.

\bibitem{berkovich2012spectral}
V. G. Berkovich.
\newblock {\em Spectral theory and analytic geometry over non-Archimedean
  fields}.
\newblock No.~33. American Mathematical Soc., 2012.

\bibitem{boucksom2011okounkov}
S. Boucksom and H. Chen.
\newblock Okounkov bodies of filtered linear series.
\newblock {\em Compositio Mathematica}, Vol. 147, No.~4, pp. 1205--1229, 2011.

\bibitem{boucksom2018singular}
S. Boucksom and M. Jonsson.
\newblock Singular semipositive metrics on line bundles on varieties over
  trivially valued fields.
\newblock {\em arXiv preprint arXiv:1801.08229}, 2018.

\bibitem{chen2010arithmetic}
H. Chen.
\newblock Arithmetic {F}ujita approximation.
\newblock {\em Annales Scientifiques de l'{\'E}cole Normale Sup{\'e}rieure},
  Vol.~43, No.~4, pp. 555--578, 2010.

\bibitem{chen2015majorations}
H. Chen.
\newblock Majorations explicites des fonctions de {H}ilbert-{S}amuel
  g{\'e}om{\'e}trique et arithm{\'e}tique.
\newblock {\em Mathematische Zeitschrift}, Vol.~1, No. 279, pp. 99--137, 2015.

\bibitem{chen2017sufficient}
H. Chen and A. Moriwaki.
\newblock Sufficient conditions for the {D}irichlet property.
\newblock {\em arXiv preprint arXiv:1704.01410}, 2017.

\bibitem{chen2019arakelov}
H. Chen and A. Moriwaki.
\newblock Arakelov geometry over adelic curves.
\newblock {\em arXiv preprint arXiv:1903.10798}, 2019.

\bibitem{faltings1984calculus}
G. Faltings.
\newblock Calculus on arithmetic surfaces.
\newblock {\em Annals of mathematics}, pp. 387--424, 1984.

\bibitem{fulger2016volume}
M. Fulger, J. Koll{\'a}r, and B. Lehmann.
\newblock Volume and {H}ilbert functions of $\mathbb{R}$-divisors.
\newblock {\em The Michigan Mathematical Journal}, Vol.~65, No.~2, pp.
  371--387, 2016.

\bibitem{gardner2002brunn}
R. Gardner.
\newblock The {B}runn-{M}inkowski inequality.
\newblock {\em Bulletin of the American Mathematical Society}, Vol.~39, No.~3,
  pp. 355--405, 2002.

\bibitem{gillet1990arithmetic}
H. Gillet and C. Soul{\'e}.
\newblock Arithmetic intersection theory.
\newblock {\em Publications Math{\'e}matiques de l'Institut des Hautes
  {\'E}tudes Scientifiques}, Vol.~72, No.~1, pp. 94--174, 1990.

\bibitem{gillet1992arithmetic}
H. Gillet and C. Soul{\'e}.
\newblock An arithmetic {R}iemann-{R}och theorem.
\newblock {\em Inventiones mathematicae}, Vol. 110, No.~1, pp. 473--543, 1992.

\bibitem{hartshorne2013algebraic}
R. Hartshorne.
\newblock {\em Algebraic geometry}, Vol.~52.
\newblock Springer Science \& Business Media, 2013.

\bibitem{lazarsfeld2017positivity}
R. K. Lazarsfeld.
\newblock {\em Positivity in algebraic geometry I: Classical setting: line
  bundles and linear series}, Vol.~48.
\newblock Springer, 2017.

\bibitem{leindler1972certain}
L. Leindler.
\newblock On a certain converse of {H}{\"o}lder’s inequality. 2.
\newblock {\em Acta Scientiarum Mathematicarum}, Vol.~33, pp. 217--223, 1972.

\bibitem{liu2006algebraic}
Q. Liu and R. Erne.
\newblock {\em Algebraic Geometry and Arithmetic Curves}, Vol.~6.
\newblock Oxford University Press, 2006.

\bibitem{moriwaki2009continuity}
A. Moriwaki.
\newblock Continuity of volumes on arithmetic varieties.
\newblock {\em Journal of Algebraic Geometry}, Vol.~18, No.~3, pp. 407--457,
  2009.

\bibitem{moriwaki2014arakelov}
A. Moriwaki.
\newblock {\em Arakelov Geometry}, Vol. 244.
\newblock American Mathematical Soc., 2014.

\bibitem{moriwaki2016adelic}
A. Moriwaki.
\newblock {\em Adelic divisors on arithmetic varieties}, Vol. 242.
\newblock American Mathematical Society, 2016.

\bibitem{1971logarithmic}
A. Pr{\'e}kopa.
\newblock Logarithmic concave measures with application to stochastic
  programming.
\newblock {\em Acta Scientiarum Mathematicarum}, Vol.~32, pp. 301--316, 1971.

\bibitem{prekopa1973logarithmic}
A. Pr{\'e}kopa.
\newblock On logarithmic concave measures and functions.
\newblock {\em Acta Scientiarum Mathematicarum}, Vol.~34, pp. 335--343, 1973.

\bibitem{vojta1991siegel}
P. Vojta.
\newblock Siegel's theorem in the compact case.
\newblock {\em Annals of Mathematics}, Vol. 133, No.~3, pp. 509--548, 1991.

\end{thebibliography}
\end{document}